\documentclass[11pt]{amsart}
\usepackage{amsthm,amsfonts,amsmath,amssymb,latexsym,epsfig,mathrsfs,yfonts,marvosym}
\usepackage{amscd}
\usepackage{graphicx}
\usepackage{url}
\usepackage[colorlinks=true]{hyperref}
\usepackage[T1]{fontenc}
\usepackage[small,nohug,heads=littlevee]{diagrams}
\usepackage{centernot}
\usepackage{mathtools}
\usepackage{stmaryrd}
\usepackage[utf8]{inputenc}
\usepackage[english]{babel}
\usepackage[square,numbers]{natbib}
\usepackage{color}
\bibliographystyle{abbrvnat}

\addtolength{\textheight}{2cm}
\addtolength{\textwidth}{3cm}
\addtolength{\topmargin}{-1cm}
\addtolength{\oddsidemargin}{-1.5cm}
\addtolength{\evensidemargin}{-1.5cm}

\newtheorem {thm}{Theorem}[section]
\newtheorem {thmA}{Theorem}[]
\newtheorem {lemma} {Lemma} [section]
\newtheorem {prop}[lemma]{Proposition}

\theoremstyle{definition}
\newtheorem{Definition}[lemma]{Definition}
\newtheorem{remark}{Remark}
\newtheorem{ex}[lemma]{Example}

\newtheorem{Question}[lemma]{Question}

\newtheorem{defn}[lemma]{Definition}
\newtheorem{conjecture}{Conjecture}

\newcounter{numl}

\newcommand{\labelnuml}{\textup{(\roman{numl})}}
\newenvironment{numlist}{\begin{list}{\labelnuml}%
{\usecounter{numl}\setlength{\leftmargin}{0pt}%
\setlength{\itemindent}{2\parindent}%
\setlength{\itemsep}{\smallskipamount}\def
\makelabel ##1{\hss \llap {\upshape ##1}}}}{\end{list}}

\newcommand{\be}{\boldsymbol b}
\newcommand{\z}{\zeta}
\newcommand{\R}{{\mathbb R}}
\newcommand{\C}{{\mathbb C}}

\newcommand{\Z}{{\mathbb Z}}
\newcommand{\N}{{\mathbb N}}
\newcommand{\Q}{{\mathbb Q}}
\newcommand{\T}{{\mathbb T}}

\newcommand{\cA}{{\mathcal A}}

\newcommand{\cL}{{\mathcal L}}

\newcommand{\cO}{{\mathcal O}}

\newcommand{\Sph}{\mathbb S}

\newcommand{\symprod}{\mathbin{\raise1pt\hbox{$\scriptstyle\bigcirc$}}}

\newcommand{\p}{\mathrm p}
\newcommand{\bb}{\boldsymbol a}
\newcommand{\tor}{\mathfrak{t}}

\newcommand{\Mpc}{p_{\mathrm{c}}}

\newcommand{\Ref}[1]{(\ref{#1})}
\newcommand{\wht}{\widehat}

\newcommand{\lb}[1]{\label{#1}}
\newcommand{\nab}[1][]{\ensuremath{\mathrm{\nabla}{#1}}}
\newcommand{\om}{\omega}
\newcommand{\fr}{\frac}
\newcommand{\sig}{\sigma}

\begin{document}

\title[Weighted extremal metrics on projective bundles]{Weighted extremal K\"ahler metrics and the Einstein--Maxwell geometry of projective bundles}
\author[Vestislav Apostolov, Gideon Maschler and Christina W. T{\o}nnesen-Friedman]{Vestislav Apostolov, Gideon Maschler, Christina W. T{\o}nnesen-Friedman}
\thanks{The first named author was supported in part by an NSERC Discovery Grant. He is grateful to the Institute of Mathematics and Informatics of the Bulgarian Academy of Science where a part of this work was accomplished. The third named author was supported by the grant 422410 from the Simon's Foundation. The authors are grateful to Abdellah Lahdili for his insightful remarks on the manuscript and to David Calderbank for his help in clarifying some aspects of  Appendix A}
\address{Vestislav Apostolov \\ D{\'e}partement de Math{\'e}matiques\\
UQAM\\ C.P. 8888 \\ Succ. Centre-ville \\ Montr{\'e}al (Qu{\'e}bec)} \email{apostolov.vestislav@uqam.ca}
\address{Gideon Maschler \\ Department of Mathematics and Computer Science\\Clark University\\Worcester\\ MA 01610\\ USA } \email{gmaschler@clarku.edu}
\address{Christina W. T{\o}nnesen-Friedman\\ Department of Mathematics\\ Union
College\\ Schenectady\\ New York 12308\\ USA } \email{tonnesec@union.edu}
\date{\today}

\begin{abstract} We study the existence of weighted extremal K\"ahler metrics in the sense of \cite{aclg17,lahdili}  on the total space of an admissible projective bundle over a Hodge K\"ahler manifold of constant scalar curvature. Admissible projective bundles have been defined in  \cite{acgt}, and they  include the projective line bundles~\cite{hwang-singer}  and their blow-downs~\cite{koi-sak1}, thus providing a most general setting for extending the existence theory for extremal K\"ahler metrics pioneered by a seminal construction of Calabi~\cite{calabi}. We obtain a general existence result for weighted extremal metrics on admissible manifolds,  which yields  many new examples of conformally K\"ahler, Einstein--Maxwell metrics of complex dimension $m>2$, thus extending the recent  constructions of \cite{KoTo16,Le15} to higher dimensions. For each admissible K\"ahler class on an admissible projective bundle, we  associate an explicit function of one variable and show that if it is positive on the interval $(-1,1)$, then there exists a weighted extremal K\"ahler  metric in the given class, whereas if it is strictly negative somewhere in $(-1,1)$, there is no  K\"ahler metrics of constant weighted scalar curvature in that class. We also relate the positivity of the function to a notion of weighted K-stability, thus establishing a Yau--Tian--Donaldson type correspondence for the existence of K\"ahler metrics of constant weighted scalar curvature in the rational admissible K\"ahler classes on an admissible projective bundle.
Weighted extremal orthotoric metrics are examined in an appendix.\end{abstract}

\maketitle

\tableofcontents

\section{Introduction}

\subsection{Motivation and Background}
There has been a great deal of interest recently in studying  the relation between the existence of extremal K\"ahler metrics, especially constant scalar curvature K\"ahler metrics (CSCK), on a compact complex $m$-dimensional manifold and various notions of algebro-geometric stability: this is the so-called Yan--Tian--Donaldson (YTD) conjecture.

The YTD conjecture takes its origins in a formal GIT picture in which the formal (infinite dimensional) K\"ahler manifold is the Fr\'echet space  ${\mathcal K}(M, \omega)$ of all  complex structures  $J$ on $M$, compatible with a given symplectic form $\omega$,  whereas the formal complex structure ${\bf J}$ and K\"ahler form $\boldsymbol{\Omega}$ on ${\mathcal K}(M,\omega)$ are  defined by
\begin{equation}\label{kahler-structure}
{\bf J}_J(\dot J)= J \dot J, \ \ \ {\boldsymbol \Omega}_J({\dot J}_1, {\dot J}_2) = \int_M {\rm tr} \Big(J {\dot J}_1 {\dot J}_2\big) \frac{\omega^m}{m!},
\end{equation}
where $\dot J, {\dot J}_1, {\dot J}_2 \in \Gamma ({\rm End}(TM))$ are tangent vectors at $J\in {\mathcal K}(M, \omega)$,   viewed as smooth fields of endomorphisms of $TM$ which anti-commute with $J$. The main observation due to Fujiki~\cite{fujiki} and Donaldson~\cite{donaldson} is that the infinite dimensional Lie group ${\rm Ham}(M, \omega)$ of hamiltonian transformations of $(M, \omega)$ acts on ${\mathcal K}(M, \omega)$ in a hamiltonian way, with moment map
\begin{equation}\label{moment map}
 {\boldsymbol \mu}(J)= {\mathring {Scal}}(g_J)={Scal}(g_J) - {\underline S}(\omega),
 \end{equation}
 where $Scal(g_J)$ is the scalar curvature of the K\"ahler metric $g_J(\cdot, \cdot):= \omega(\cdot, J\cdot)$, ${\underline S}(\omega)$ is its average,  and the momentum map ${\boldsymbol \mu}(J)$ at $J$ is viewed as an element of the Lie algebra ${\rm Lie}({\rm Ham}(M, \omega)) \cong (C^\infty(M))_0$ of smooth functions with zero mean with respect to the measure $\omega^m/m!$, via the ${\rm Ad}$-invariant inner product
 \begin{equation}\label{inner-product-classic}
\langle \varphi_1, \varphi_2 \rangle = \int_M  \varphi_1 \varphi_2 \frac{\omega^m}{m!}.
\end{equation}
Thus, finding CSCK metrics in ${\mathcal K}(M, \omega)$ corresponds to finding zeroes of the momentum map ${\boldsymbol \mu}$ whereas finding extremal K\"ahler metrics in ${\mathcal K}(M, \omega)$ corresponds to finding critical points of the Calabi functional ${\bf Cal}(J)=||{\boldsymbol \mu}(J)||^2= \int_M (Scal(g_J)-{\underline S})^2 \omega^m/m!$.

\smallskip
It is natural to expect that  for other  geometric problems in K\"ahler geometry, for which we have a similar formal GIT interpretation, a  suitable YTD type correspondence holds true.   This is the case for a class of problems,   motivated in \cite{am} and further developed in \cite{lahdili3},    which can be fit into the the above GIT picture by fixing a torus $\T \subset {\rm Ham}(M, \omega)$ with momentum map $z : M \to \tor^*$, and image  a compact polytope $P \subset \tor^*$ (here $\tor$ denotes the Lie algebra of $\T$ and $\tor^*$ its dual) and  two positive smooth functions $u, v : P \to \R$. Then,  the space ${\mathcal K}^{\T}(M,\omega)$ of $\T$-invariant $\omega$-compatible complex structures  on $M$ caries a formal K\"ahler structure $({\bf J}, {\boldsymbol \Omega}^u)$  with
\begin{equation}\label{symplectic-modified}
{\boldsymbol \Omega}^u_{J}({\dot J}_1, {\dot J}_2) = \int_M {\rm tr} \Big(J {\dot J}_1 {\dot J}_2\big) (u\circ z) \frac{\omega^m}{m!},
\end{equation}
which is invariant under the action of the group ${\rm Ham}^{\T}(M, \omega)$ of hamiltonian transformation commuting with $\T$. Furthermore,  the action of ${\rm Ham}^{\T}(M, \omega)$  is hamiltonian. By considering the ${\rm Ad}$-invariant inner product
\begin{equation}\label{inner-product-modified}
\langle \varphi_1, \varphi_2 \rangle_v = \int_M  \varphi_1 \varphi_2  (v\circ z) \frac{\omega^m}{m!},
\end{equation}
on the Lie algebra ${\rm Lie}\big({\rm  Ham}(M, \omega)^{\T}\big) \cong \big(C^{\infty}(M)\big)^{\T}_{0,v}$ of $\T$-invariant smooth functions of zero mean with respect to the measure $(v\circ z)\omega^m/m!$,  one identifies linear functionals on ${\rm Lie}\big({\rm  Ham}(M, \omega)^{\T}\big)$ with $\T$-invariant smooth functions,  and
denote by
\begin{equation}\label{moment-map-modified}
{\mathring {Scal}}_{u,v, \T}(g_J):= {\boldsymbol \mu}_{u,v, \T}(J)
\end{equation}
the corresponding moment map. One thus expects that the problem of finding $\T$-invariant K\"ahler metrics in a given K\"ahler class, for which ${\mathring {Scal}}_{u,v, \T}(g_J)= 0$ or, more generally, for which  ${\rm grad}_{g_J}({\mathring {Scal}}_{u,v, \T}(g_J))$ is a holomorphic vector field,   shares many common features  with the well-established theory of CSCK  and extremal  K\"ahler metrics. We shall refer to such special K\"ahler metrics as  {\it $(u,v)$-CSCK} and {\it $(u,v)$-extremal} K\"ahler metrics, respectively.

\smallskip There are a number of geometric situations which reduce to the above formal GIT setting for particular choices of the functions $u$ and $v$. These  include the problem of the existence of admissible CSCK metrics on rigid semi-simple toric bundles \cite{ACGT4}, and on manifolds with free multiplicity~\cite{donaldson-survey}.

\bigskip  In this paper, we shall focus on the geometric problem introduced in \cite{am} and \cite{aclg17}, which fits into the above context by choosing  $\xi \in \tor$ and $\bb>0$ such that $\pi_{\xi}(P) + \bb >0$ (where $\pi_{\xi} : \tor^* \to \tor^*_{\xi} = \R$  is the projection dual to the inclusion $\R \cdot \xi \subset \tor$),  and  letting  $u:= (\pi_{\xi}+\bb)^{-(\p+1)}$ and $v:=(\pi_{\xi} +\bb)^{1-\p}$ for $\p\in \R$. As observed in \cite{aclg17,lahdili2}, the corresponding momentum map \eqref{moment-map-modified} then becomes (up to an additive constant depending only on the deRham class $[\omega]$)
\begin{equation}\label{scalh1}
Scal_{f,\p}(g)= f^2 Scal(g)  -2(\p-1) f\Delta_g f - \p(\p-1)|df|^2_g,
\end{equation}
where $f:= \langle z, \xi \rangle + \bb$ is a positive Killing potential of the K\"ahler metric $g$.  The smooth function $Scal_{f,\p}(g)$  is referred to as {\it  the $(f, \p)$-scalar curvature} of $g$, and we are interested in finding K\"ahler metrics of constant $(f, \p)$-scalar curvature, which we call {\it $(f,\p)$-CSCK metrics} or, more generally,   K\"ahler metrics for which $Scal_{f,\p}(g)$ is a Killing potential, which we refer to as {\it $(f,\p)$-extremal K\"ahler metrics}.  The case $\p=2m$ has been studied in \cite{ambitoric1,am,futakiono17a,futakiono17b, lahdili, lahdili2, LeJGP, Le15, KoTo16}, having the geometric meaning that $Scal_{f,2m}(g)$ computes  the scalar curvature of the Hermitian metric $h = f^{-2} g$. Thus, a K\"ahler metric $g$ for which $Scal_{f,2m}(g)$ is constant corresponds to a Hermitian metric $h=f^{-2}g$  which is {\it conformally K\"ahler,  Einstein--Maxwell}, see \cite{Le08,ambitoric1,am}. Another geometrically interesting case  is when $\p=m+2,$  which is related to the study of Levi--K\"ahler quotients~\cite{aclg17} and extremal Sasaki metrics~\cite{ac}.

\subsection{Main Results} The main results of this paper concern the existence  and obstruction theory of $(f, \p)$-extremal K\"ahler metrics  and $(f, \p)$-CSCK metrics on certain holomorphic projective bundles of the form $M={P}(E_0 \oplus E_{\infty}) \to S,$ where $E_0$ and $E_{\infty}$ are projectively flat bundles (of arbitrary rank) over a compact CSCK manifold $S$. Such complex manifolds, introduced and studied  in \cite{acgt} are called {\it admissible}. Any admissible manifold $M$ is  endowed with a natural $\Sph^1$-action,  induced by  the natural $\Sph^1$-action  by fibre-wise multiplications on the bundle $E_0$. Following \cite{acgt}, $M$ admits  a family  of $\Sph^1$-invariant symplectic structures $\omega$ (called {\it admissible}) with associated momentum image $z(M)= [-1,1] \subset \R$.
With this normalization, for any real constants $\bb>1$ and $\p$, we consider the problem of finding an $\omega$-compatible $(z+\bb, \p)$-CSCK metric or, more generally, an $\omega$-compatible K\"ahler metric for which $Scal_{(z+\bb), \p}$ is  a Killing potential (called a $(z+\bb, \p)$-extremal K\"ahler metric). The construction of $(z+\bb, \p)$-extremal K\"ahler metrics on admissible complex manifolds naturally extends the well-known constructions, going back to Calabi~\cite{calabi},  of  explicit extremal K\"ahler metrics on $\C P^1$-bundles over a CSCK base and their blow-downs (see \cite{acgt} and the references therein) and involves a smooth function of a single variable, defined on the momentum image $[-1,1] \subset \R$.  More precisely, we show (see Proposition~\ref{emsolution}) that  for {\it any} admissible K\"ahler class $\Omega=[\omega]$ on the admissible manifold $M$,  there exists an explicitly defined  smooth function  $F_{\Omega, \bb, \p}(z)$  which gives rise to an explicit $(z+\bb, \p)$-extremal K\"ahler metric in $\Omega$ (given by the Calabi ansatz),   provided  that  $F_{\Omega, \bb, \p}(z)$ is strictly positive on $(-1,1)$. This gives  a {\it sufficient condition} for $\Omega$ to admit a $(z+\bb, \p)$-extremal K\"ahler metric, and  leads to many new examples.  A sample  is provided by the following
\begin{thmA}(see Theorem~\ref{fextremalexistence} below) Let  $M={P}(E_0 \oplus E_{\infty})\to S$ be an admissible manifold over a compact CSCK manifold  $S$. Then, for any  $\bb >1$ and  $\p\in \R$, $M$ admits (admissible) $(z+\bb, \p)$-extremal K\"ahler metrics in some admissible K\"ahler classes. If, moreover, the K\"ahler manifold $S$  is a local product of nonnegative CSCK metrics, then  for any  $\bb >1$ and  $\p\in \R$, every admissible K\"ahler class contains an (admissible) $(z+\bb,\p)$-extremal K\"ahler metric.
\end{thmA}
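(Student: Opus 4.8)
The plan is to reduce the entire statement to the positivity criterion already isolated in Proposition~\ref{emsolution}: an admissible class $\Omega$ carries an (admissible) $(z+\bb,\p)$-extremal metric as soon as the associated one-variable function $F_{\Omega,\bb,\p}$ is strictly positive on $(-1,1)$. Since $\bb>1$ forces $z+\bb>0$ on $[-1,1]$, and the fibre--base normalizing factor $\prod_a(1+r_az)^{d_a}$ is positive there as well, the momentum profile $\Theta$ of the Calabi ansatz equals $F_{\Omega,\bb,\p}$ divided by a positive function on $(-1,1)$, so both assertions become statements about the sign of a single explicit function. First I would record the structure of $F_{\Omega,\bb,\p}$: it is obtained by integrating twice an explicit right-hand side $R_{\Omega,\bb,\p}(z)$ and fixing the two integration constants by the endpoint conditions $F_{\Omega,\bb,\p}(\pm1)=0$; the boundary behaviour of the admissible metric ($\Theta'(\pm1)=\mp2$) then forces $F_{\Omega,\bb,\p}'(-1)>0>F_{\Omega,\bb,\p}'(1)$, so that $F_{\Omega,\bb,\p}>0$ automatically near each endpoint and only interior positivity is in question.

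The decisive observation is that $R_{\Omega,\bb,\p}$, and hence $F_{\Omega,\bb,\p}$, depends \emph{affinely} on the constant scalar curvatures $s_a$ of the base factors, because $Scal(g)$ enters \eqref{scalh1} linearly and the extremal affine normalization is itself determined by a linear system in the $s_a$. I would therefore write
\[
F_{\Omega,\bb,\p}=F^0_{\Omega,\bb,\p}+\sum_a s_a\,G^a_{\Omega,\bb,\p},
\]
where $F^0$ is the value at $s_a=0$ (the flat-base model) and the $G^a$ are independent of the $s_a$. For the second, stronger assertion I would then show that under the hypothesis $s_a\ge0$ each summand is nonnegative on $(-1,1)$: that $F^0_{\Omega,\bb,\p}>0$ for every admissible class (the flat case, where the defining right-hand side keeps constant sign, so a Rolle/sign-change count against the endpoint derivatives closes positivity), and that each $G^a_{\Omega,\bb,\p}\ge0$ (the corresponding right-hand side term carrying a definite sign once the weight $(z+\bb)^{\text{power}}$, a bounded positive factor on $[-1,1]$, is divided out). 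Positivity of the full combination for every admissible $\Omega$ is then exactly the ``every admissible class'' conclusion.

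For the first assertion it suffices to produce \emph{one} admissible class with $F_{\Omega,\bb,\p}>0$, and here I would argue by limiting/openness. Along a distinguished one-parameter family of admissible classes --- for instance the small-fibre (equivalently, large-base) degeneration in which the normalizing factor tends to a constant and the base obstruction terms in $R_{\Omega,\bb,\p}$ rescale to zero --- the function $F_{\Omega,\bb,\p}$ converges, after normalization, to the manifestly positive model function attached to the $\C P^1$-fibre (or product) limit. Since strict positivity on $(-1,1)$ with the fixed endpoint data is an open condition, an open set of admissible classes inherits $F_{\Omega,\bb,\p}>0$, yielding extremal metrics in some (indeed in an open family of) admissible classes for every $\bb>1$ and $\p\in\R$.

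The main obstacle I anticipate is the uniformity in the second assertion: establishing $F^0_{\Omega,\bb,\p}>0$ and $G^a_{\Omega,\bb,\p}\ge0$ simultaneously for \emph{all} admissible classes and \emph{all} real $\p$ and $\bb>1$. The weight $(z+\bb)^{\text{power}}$ is harmless, being bounded above and below on $[-1,1]$, but controlling the sign of the twice-integrated right-hand side uniformly in the Kähler-class parameters $r_a$ requires a careful bound on the integration constants; this is where the convexity encoded by $s_a\ge0$ must be exploited, rather than any smallness of the class.
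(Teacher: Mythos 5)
Your overall architecture is the paper's: both assertions are reduced to the positivity criterion of Proposition~\ref{emsolution}, the first via a degeneration-plus-openness argument and the second via a sign analysis exploiting the curvature hypothesis. Your ``small-fibre/large-base'' limit is exactly the paper's limit $x_a\to 0$ in \eqref{emNew}: the linear system \eqref{A1andA2} stays nondegenerate, so $A_1,A_2$ converge, and strict positivity of $F_{\Omega,\bb,\p}$ on $(-1,1)$ is open in the class parameters. One caution: the limit model is \emph{not} manifestly positive when $d_0$ or $d_\infty$ is nonzero, because the blow-down terms ($x_0=1$, $x_\infty=-1$, $s_0=d_0+1$, $s_\infty=-(d_\infty+1)$) survive the limit; the paper closes this by observing that the limiting right-hand side of \eqref{emNew} has at most one, two or three zeroes in $(-1,1)$ according to whether $d_0,d_\infty$ vanish, so that $\lim_{x\to 0}G$ has too few inflection points to vanish on $(-1,1)$ given the endpoint data \eqref{endG} --- essentially the Rolle count you sketch, so this part is sound modulo that bookkeeping.

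The genuine gap is in your positivity mechanism for the second assertion. Your affine decomposition $F_{\Omega,\bb,\p}=F^0+\sum_a s_a G^a$ is legitimate (the $\alpha$'s in \eqref{alpha-beta-r-q} are independent of the $s_a$ and the $\beta$'s are affine in them), but your claim that ``the corresponding right-hand side term carr[ies] a definite sign once the weight is divided out'' is false: since the boundary conditions \eqref{endG} do not involve $s_a$, each coefficient $Q^a$ of $s_a$ in \eqref{fbp} satisfies $\int_{-1}^1 Q^a\,dt=\int_{-1}^1 tQ^a\,dt=0$, so it \emph{necessarily} changes sign --- the affine normalization $(A_1z+A_2)$, which you yourself note is determined by the linear system, destroys any definite sign. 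The same objection hits your claim that the flat-model right-hand side ``keeps constant sign.'' What actually saves the argument is convexity: $Q^a$ equals the positive weight $p_c(t)/(t+\bb)^{\p+1}$ times $\bigl(2d_ax_a(t+\bb)^2/(1+x_at)-(A^a_1t+A^a_2)\bigr)$, and $2d_as_ax_a(t+\bb)^2/(1+x_at)$ is convex on $(-1,1)$ precisely when $s_ax_a\ge 0$; the two moment conditions then force a $+,-,+$ sign pattern for $s_aQ^a$, and a chord/Rolle argument (with an extra multiplicity count for strictness, and care at the endpoints where $p_c(\pm1)=0$ in the blow-down cases) yields $s_aG^a\ge 0$. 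Note also that the hypothesis gives $s_ax_a\ge 0$, not $s_a\ge 0$: with the paper's sign conventions a negative-definite factor has $x_a<0$ and $s_a\le 0$, so it is $s_aG^a$, not $G^a$, that is nonnegative. This convexity/root-counting is exactly the Hwang--Guan argument \cite{Hwang94,Guan95} that the paper invokes, applied there not termwise but to the single polynomial $P(z)$ of \eqref{newem1}, whose values at the points $-1/x_a$ (all outside $[-1,1]$) are computed in \eqref{newem2} and carry the hypothesis through $s_ax_a$. Your termwise variant could likely be completed along these lines, but as written the decisive positivity step is asserted on incorrect grounds and left unresolved.
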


Similarly to the extremal case studied~\cite{acgt}, the smooth function $F_{\Omega, \bb, \p}(z)$ also allows one to {\it compute} the  vanishing of  the $(z+\bb,\p)$-Futaki invariant associated to the admissible K\"ahler class $\Omega$, which in turn is the obstruction found in \cite{am,lahdili} for a $(z+\bb,\p)$-extremal K\"ahler metric in $\Omega$ to be actually $(z+\bb, \p)$-CSCK (see Proposition~\ref{futaki}). Specializing to the case $\p=2m$, this leads to many new explicit examples of conformally K\"ahler, Einstein--Maxwell metrics  (see Propositions~\ref{O(-1,1)} and \ref{hodge}), thus extending the constructions in \cite{LeJGP,Le15,futakiono17a,KoTo16} to higher dimensions.

\smallskip
Another aspect of theory in \cite{acgt}, which we partially extend in this paper to the $(z+\bb, \p)$-extremal case, consists of showing that the positivity of the function $F_{\Omega, \bb, \p}(z)$  on the interval $(-1,1)$ is also  a {\it necessary condition} for the  existence of a $(z+\bb, \p)$-extremal K\"ahler metric in $\Omega$. This was achieved in \cite{acgt} by using the following  two deep results concerning extremal K\"ahler manifolds: (a) the boundedness of the relative Mabuchi energy,  and,  (b) the uniqueness of the extremal K\"ahler metrics modulo the action of the automorphism group of $M$, see \cite{bb,ct}. Neither of these results is yet available in the general $(f, \p)$-extremal setting, but A. Lahdili~\cite{lahdili2} has recently established the analogue of (a) in the $(f, \p)$-extremal case, assuming that $\Omega$ is  rational and  that    the corresponding  $(f,\p)$-Futaki invariant vanishes.  Expressing the  relative $(K, \bb, \p)$-Mabuchi energy (Definition~\ref{d:mabuchi}) of an admissible manifolds in terms of the function $F_{\Omega, \bb, \p}(z)$ (Propositions~\ref{mabuchi} and \ref{Lahdili}), we thus obtain
\begin{thmA}\label{A}(see Theorem~\ref{lahdili2}) Let $M={P}(E_0 \oplus E_{\infty})\to S$  be an admissible K\"ahler manifold over a compact CSCK manifold $S$. Suppose that the admissble K\"ahler class  $\Omega$ is a positive multiple of an element  in  $H^{2}(M, \Z)$, and that the corresponding  smooth function $F_{\Omega, \bb, \p}(z)$ is strictly negative somewhere on $(-1,1)$. Then $\Omega$ does not admit a K\"ahler metric of constant $(z+\bb,\p)$-scalar curvature.
\end{thmA}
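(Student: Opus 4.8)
The plan is to argue by contradiction: assuming a metric of constant $(z+\bb,\p)$-scalar curvature in $\Omega$, I would invoke Lahdili's boundedness of the relative weighted Mabuchi energy, and then exhibit an explicit ray of admissible metrics along which this energy tends to $-\infty$ as soon as $F_{\Omega,\bb,\p}$ becomes negative, contradicting the bound.

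First, suppose $\Omega$ carries a $(z+\bb,\p)$-CSCK metric. Being $(z+\bb,\p)$-extremal with vanishing extremal part, such a metric forces the $(z+\bb,\p)$-Futaki invariant of $\Omega$ to vanish. Since $\Omega$ is a positive multiple of an integral class, it is rational, so both hypotheses of Lahdili's theorem \cite{lahdili2} --- rationality and vanishing of the Futaki invariant --- are met. I would conclude that the relative $(K,\bb,\p)$-Mabuchi energy $\mathcal{M}$ is bounded from below on the space of $\T$-invariant K\"ahler metrics in $\Omega$, and in particular on the subfamily of admissible metrics.

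Next I would restrict $\mathcal{M}$ to admissible metrics. By the Calabi ansatz these are parametrized by a momentum profile $\Theta$, a smooth function on $[-1,1]$ with $\Theta>0$ on $(-1,1)$, $\Theta(\pm1)=0$, together with the first-order boundary conditions at $\pm1$ ensuring smooth compactification; each such $\Theta$ gives a K\"ahler metric in the fixed class $\Omega$. By Propositions~\ref{mabuchi} and \ref{Lahdili} the restricted energy takes the form
\[
\mathcal{M}(\Theta)=\int_{-1}^1\big(-\log\Theta(z)\big)\,w(z)\,dz+\int_{-1}^1\Theta(z)\,q(z)\,dz+\mathrm{const},
\]
where $w>0$ on $(-1,1)$ is the density induced by the volume form and the linear density $q$ has the same sign as $F_{\Omega,\bb,\p}$; crucially, neither term involves derivatives of $\Theta$. (As a consistency check, the Euler--Lagrange equation $q=w/\Theta$ recovers the explicit extremal profile of Proposition~\ref{emsolution}, so that positivity of $\Theta$ is equivalent to positivity of $F_{\Omega,\bb,\p}$.)

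Now I would use the hypothesis. Since $F_{\Omega,\bb,\p}(z_0)<0$ for some $z_0\in(-1,1)$, the open set $\{q<0\}$ contains a nonempty interval $I\subset(-1,1)$. Fixing one admissible profile $\Theta_0$ and a smooth bump $\chi\geq0$ supported in $I$ with $\chi\not\equiv0$, I would consider the ray $\Theta_s:=\Theta_0+s\chi$, $s\geq0$: each $\Theta_s$ is again admissible, since $\chi$ vanishes near $\pm1$ and $\Theta_s\geq\Theta_0>0$ on $(-1,1)$. Because $\Theta_s\geq\Theta_0$ and $w>0$, the entropy term is bounded above by its value at $s=0$, while the linear term equals a constant plus $s\int_{-1}^1\chi\,q\,dz$, with $\int_{-1}^1\chi q\,dz<0$ as $\chi\geq0$ is supported where $q<0$. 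Hence $\mathcal{M}(\Theta_s)\leq \mathrm{const}+s\int_{-1}^1\chi q\,dz\to-\infty$ as $s\to\infty$, contradicting the lower bound from Lahdili's theorem. Therefore $\Omega$ admits no $(z+\bb,\p)$-CSCK metric.

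The step I expect to be the crux is establishing the clean structural form of the restricted energy --- a geodesically convex logarithmic entropy term plus a linear term whose density carries the sign of $F_{\Omega,\bb,\p}$, with no hidden dependence on $\Theta'$ --- which is the content of Propositions~\ref{mabuchi} and \ref{Lahdili}; equally delicate is matching the hypotheses of Lahdili's boundedness result (rationality, Futaki vanishing, and the relative normalization) and verifying that the profiles $\Theta_s$ genuinely lie in the class of metrics to which that lower bound applies.
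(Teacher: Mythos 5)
Your high-level skeleton is exactly the paper's proof: existence of a $(z+\bb,\p)$-CSCK metric forces the $(K,\bb,\p)$-Futaki invariant to vanish (Proposition~\ref{futaki}, equivalently $A_1=0$); rationality of $\Omega$ plus Futaki vanishing lets one invoke Lahdili's theorem \cite[Thm.~1]{lahdili2} to get a lower bound on the relative Mabuchi energy; and Proposition~\ref{Lahdili} contradicts this when $F_{\Omega,\bb,\p}<0$ somewhere. However, the step you spell out explicitly --- the structural form of the restricted energy --- is wrong, and your ray fails as a result. By Proposition~\ref{mabuchi} the energy is linear in the symplectic potential's second derivative $u''=1/\Theta$, not in $\Theta$: with $G(z):=F_{\Omega,\bb,\p}(z)/(z+\bb)^{\p-1}$ and $w(z):=p_c(z)/(z+\bb)^{\p-1}>0$, it reads
\begin{equation*}
\mathcal{M}(\Theta)=\int_{-1}^1 G(z)\Big(\frac{1}{\Theta(z)}-\frac{1}{\Theta_c(z)}\Big)dz+\int_{-1}^1 w(z)\log\Big(\frac{\Theta(z)}{\Theta_c(z)}\Big)dz,
\end{equation*}
i.e.\ the entropy term enters with $+\log\Theta$, not $-\log\Theta$, and the ``linear'' density weights $1/\Theta$. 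Your own consistency check detects this: the Euler--Lagrange equation of your proposed functional gives $\Theta=w/q\propto p_c/F_{\Omega,\bb,\p}$, the \emph{reciprocal} of the true extremal profile $\Theta=F_{\Omega,\bb,\p}/p_c$ from Proposition~\ref{emsolution}.

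Consequently your ray $\Theta_s=\Theta_0+s\chi$ produces no contradiction: along it the term linear in $1/\Theta$ stays bounded (it improves, since $1/\Theta_s\to 0$ where $G<0$), while the entropy term grows like $(\log s)\int_{\mathrm{supp}\,\chi}w\,dz>0$, so $\mathcal{M}(\Theta_s)\to+\infty$ rather than $-\infty$. The correct degeneration goes the opposite way, collapsing $\Theta$ where $F_{\Omega,\bb,\p}<0$: set $u_s=u_c+sf$ with $f''=\chi\ge 0$ a bump supported in $\{F_{\Omega,\bb,\p}<0\}\cap(-1,1)$, i.e.\ $\Theta_s=\Theta_c/(1+s\chi\,\Theta_c)$. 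Each $\Theta_s$ satisfies \eqref{positivity} since $\chi$ vanishes near $\pm 1$, the term linear in $u''$ equals $s\int G\chi\,dz<0$ and decreases linearly in $s$, and the entropy term $-\int w\log(u_s''/u_c'')\,dz\le 0$ because $u_s''\ge u_c''$; hence $\mathcal{M}\to-\infty$, which is precisely the argument behind Proposition~\ref{Lahdili} (following \cite[Cor.~3]{acgt}) that the paper's proof invokes. With this one inversion repaired, the rest of your proposal coincides with the paper's proof.
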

We note that  in the case when  $M$ is a geometrically ruled complex surface over a compact curve of genus $\ge 2$ and $\p=4$, the above result was further strengthened  in \cite{lahdili2}, where  the rationality assumption on $\Omega$ was dropped and the non-existence  of a $(z+\bb, 4)$-CSCK metric was shown to also hold if  $F_{\Omega, \bb, 4}(z)$  vanishes somewhere on $(-1,1)$.

\bigskip
The final and perhaps most original theme of this paper concerns the link of the above existence and obstruction results with the algebro-geometric notion of (relative)  $(\hat \beta, \p)$-K-stability,  proposed in \cite{am}: Given a polarized compact smooth projective variety  $(M, L)$  and a  quasi-periodic holomorphic vector field $\hat K \in {\rm Lie}({\rm Aut}(M, L))$,  the considerations in \cite[Sect. 7.1]{am} lead to the definition of a  (relative) $(\hat K, \p)$-Donaldson--Futaki  invariant associated to a certain sub-class of test configurations compatible with $(M, L, \hat K)$, see Section~\ref{beta-futaki} below. However, one caveat of  this definition is that it involves transcendental  quantities when $\p$  is not a negative integer, leading to difficulties reminiscent to the ones involved in the definition of the $L^p$-norm of a test configuration for positive real values of $p$, see the discussion at the end of \cite{donaldson06}. Indeed, to the best of our knowledge, no direct link  is established so far between the existence of an  $(f, \p)$-CSCK metric in $c_1(L)$ and the sign of the $(\hat K, \p)$-Donaldson--Futaki  invariant of a  compatible normal test configuration,  beyond the toric context considered in \cite{am}.

Thus motivated, on an admissible manifold $M=P(E_0\oplus E_{\infty})\to S$ polarized by an ample line bundle $L$ whose Chern class is  a multiple of an  admissible K\"ahler class $\Omega$, we endow the total space $L$ with a quasi-periodic real holomorphic vector field $\hat \beta_{\bb}$,  obtained by lifting the generator of the $\Sph^1$-action on $M$ with the help of its momentum map $(z+\bb)$. We further consider the $1$-parameter family of ($\hat \beta_{\bb}$-compatible) test-configurations corresponding to the degeneration to the normal cone of the infinity section $P(0 \oplus E_{\infty}) \subset M$, see \cite{RT}, and compute the corresponding relative $(\hat \beta_{\bb}, \p)$-Donaldson--Futaki  invariant,  by adapting the arguments of \cite{sz} and \cite{acgt} to our setting. The upshot is that  the relative $(\hat \beta_{\bb}, \p)$-Donaldson--Futaki  invariant of such test-configurations, which we call {\it admissible},  is given by a  positive multiple of the function $F_{\Omega, \bb, \p}(\z)$ alluded to above, where $\z\in (-1,1)\cap \Q$ parametrizes the admissible test configurations.  We thus say that on $(M,L)$, the {\em relative} version of $(\hat \beta_{\bb}, \p)$-K-semistability/$(\hat \beta_{\bb},\p)$-K-stability/analytically $(\hat \beta_{\bb},\p)$-K-stability holds on admissible test configurations if $F_{\Omega, \bb, \p} (\z) \ge 0$ on $(-1,1)$/ $F_{\Omega, \bb, \p} (\z) > 0$ on $(-1,1)\cap \Q$/ $F_{\Omega, \bb, \p} (\z) > 0$ on $(-1,1)$, respectively. On the other hand, $(M,L)$ is  said to be $(\hat \beta_{\bb}, \p)$-K-semistable/$(\hat \beta_{\bb},\p)$-K-stable/analytically $(\hat \beta_{\bb},\p)$-K-stable on admissible test configurations if the above holds and, additionally, the $(z+\bb, \p)$-Futaki invariant of the admissible K\"ahler class $\Omega$ vanishes. Our results then can be summarized in  the following YTD type correspondence.
\begin{thmA} (see Theorem~\ref{thm:admissible-stable}) Let $M = P(E_0 \oplus E_{\infty}) \to S$ be an admissible manifold, and $L$ an ample holomorphic  line bundle  on $M,$ which defines, up to a positive multiple, an admissible K\"ahler class $\Omega$.
\begin{enumerate}
\item[$\bullet$] If for some $\bb > 1$,  $(M, L)$ is  analytically relative $(\hat \beta_{\bb},\p)$-K-stable (resp. analytically $(\hat \beta_{\bb},\p)$-K-stable) with respect to admissible test configurations, then there exists an admissible $(z + \bb, \p)$-extremal K\"ahler metric in  $\Omega$ (resp. an admissible K\"ahler metric of constant $(z + \bb, \p)$-scalar curvature).
\item[$\bullet$] If $\Omega$ admits a K\"ahler metric of constant $(z + \bb, \p)$-scalar curvature, then $(M, L)$ is $(\hat \beta_{\bb}, \p)$-K-semistable with respect to  admissible test configurations.
\end{enumerate}
\end{thmA}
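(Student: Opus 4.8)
The plan is to reduce the correspondence to the explicit existence and obstruction results already established for the one-variable function $F_{\Omega, \bb, \p}$, after translating the algebro-geometric hypotheses into sign conditions on that function. The bridge is the Donaldson--Futaki computation of Section~\ref{beta-futaki}, which identifies the relative $(\hat\beta_{\bb}, \p)$-Donaldson--Futaki invariant of the admissible test configuration parametrized by $\z\in(-1,1)\cap\Q$ with a strictly positive multiple of $F_{\Omega,\bb,\p}(\z)$. Granting this, the relevant stability notions unwind directly from their definitions: analytic relative $(\hat\beta_{\bb},\p)$-K-stability is the positivity of $F_{\Omega,\bb,\p}$ on the whole open interval $(-1,1)$; the non-relative variants add the vanishing of the $(z+\bb,\p)$-Futaki invariant of $\Omega$; and $(\hat\beta_{\bb},\p)$-K-semistability amounts to $F_{\Omega,\bb,\p}\ge 0$ on $(-1,1)$ together with that same Futaki vanishing.

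For the first bullet I would proceed as follows. Analytic relative $(\hat\beta_{\bb},\p)$-K-stability gives $F_{\Omega,\bb,\p}>0$ throughout $(-1,1)$, which is exactly the hypothesis of the Calabi-ansatz existence result Proposition~\ref{emsolution}; the latter then yields an explicit admissible $(z+\bb,\p)$-extremal K\"ahler metric in $\Omega$. For the constant-scalar-curvature strengthening under the stronger analytic $(\hat\beta_{\bb},\p)$-K-stability, I would feed in the additional vanishing of the $(z+\bb,\p)$-Futaki invariant: by Proposition~\ref{futaki}, the extremal affine term entering the Calabi ansatz is governed by this invariant, so its vanishing forces the $(z+\bb,\p)$-scalar curvature of the constructed metric to be constant, upgrading the extremal metric to a $(z+\bb,\p)$-CSCK metric.

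For the second bullet, suppose $\Omega$ carries a $(z+\bb,\p)$-CSCK metric. Because $L$ is ample with $c_1(L)$ a positive multiple of $\Omega$, the class $\Omega$ is itself a positive multiple of an element of $H^2(M,\Z)$, so the rationality hypothesis of the obstruction Theorem~\ref{lahdili2} is satisfied. Existence of a CSCK metric then forces, on the one hand, the vanishing of the $(z+\bb,\p)$-Futaki invariant of $\Omega$ (the primary obstruction to $(z+\bb,\p)$-CSCK, following \cite{am,lahdili}), and, on the other hand, by the contrapositive of Theorem~\ref{lahdili2}, that $F_{\Omega,\bb,\p}$ is nowhere strictly negative on $(-1,1)$, i.e.\ $F_{\Omega,\bb,\p}\ge 0$ there. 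Reading these two conclusions back through the identification of Section~\ref{beta-futaki}, they are precisely $(\hat\beta_{\bb},\p)$-K-semistability with respect to admissible test configurations.

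The genuinely substantive input, and the step I expect to be the main obstacle were one to build the argument from scratch, is the Donaldson--Futaki computation of Section~\ref{beta-futaki}: expressing the relative $(\hat\beta_{\bb},\p)$-invariant of the degeneration to the normal cone of the infinity section $P(0\oplus E_{\infty})$ as a positive multiple of $F_{\Omega,\bb,\p}(\z)$, while controlling the transcendental terms that appear when $\p$ is not a negative integer, by adapting the intersection-theoretic arguments of \cite{sz} and \cite{acgt}. Once this identification, together with Propositions~\ref{emsolution} and \ref{futaki} and Theorem~\ref{lahdili2}, is in place, the theorem itself is essentially a matching of definitions; the only care required is to keep straight the relative versus absolute versions and to confirm that the rationality of $\Omega$, automatic here since $\Omega$ is polarized by $L$, legitimizes the appeal to Theorem~\ref{lahdili2}.
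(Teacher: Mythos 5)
Your proposal is correct and follows essentially the same route as the paper: since Definition~\ref{d:admissible-stable} encodes the stability notions directly as sign conditions on $F_{\Omega,\bb,\p}$ (with $A_1=0$ for the non-relative versions, justified by the Donaldson--Futaki computations of Section~\ref{beta-futaki}), the first bullet is exactly Proposition~\ref{emsolution}, and the second follows, as in the paper, from the Futaki-vanishing necessity of \cite[Cor.~2]{am} via Proposition~\ref{futaki} together with the contrapositive of Theorem~\ref{lahdili2}, whose rationality hypothesis is automatic since $\Omega$ is a multiple of $c_1(L)$.
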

Conjecturally, the second claim should be improved to showing that the existence of a K\"ahler metric of constant $(z + \bb, \p)$-scalar curvature in $\Omega$ implies that $(M, L)$ is  analytically $(\hat \beta_{\bb}, \p)$-stable with respect to admissible test configurations. Using the results in \cite{KoTo16,lahdili2} we are able to establish this in the case of a geometrically ruled complex surface over a curve of genus $\ge 2$ and $\p=4$:
\begin{thmA} (see Theorem~\ref{thm:ruled-classification}) Let $M={P}(\cO \oplus E) \to \Sigma$ be a ruled complex surface, where  $E$ is a line bundle of positive degree over  a complex curve $\Sigma$ of genus $\ge 2$,  $L$ a  polarization of $M$,  which, up to a positive multiple, corresponds to an admissible K\"ahler class $\Omega$.  Then the following conditions are equivalent
\begin{enumerate}
\item[\rm (i)]  $\Omega$ admits a $(z+\bb, 4)$-CSCK metric, or, equivalently, a conformally K\"ahler, Einstein--Maxwell metric;
\item[\rm (ii)] $(M, L)$ is $(\hat \beta_{\bb}, 4)$-K-stable on admissible test configurations;
\item[\rm (iii)] $(M, L)$ is analytically $(\hat \beta_{\bb}, 4)$-K-stable on admissible test configurations.
\end{enumerate}
\end{thmA}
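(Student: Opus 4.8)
The plan is to read all three conditions off the sign of the single explicit function $F_{\Omega,\bb,4}$ on $(-1,1)$, and then to play the sharp existence result against the sharp non-existence result available in this ruled-surface situation. Recall that by Proposition~\ref{emsolution} strict positivity of $F_{\Omega,\bb,4}$ on $(-1,1)$ produces an admissible $(z+\bb,4)$-extremal metric in $\Omega$ via the Calabi ansatz, and that by Proposition~\ref{futaki} this metric is $(z+\bb,4)$-CSCK exactly when the $(z+\bb,4)$-Futaki invariant of $\Omega$ vanishes; conversely, the mere existence of a $(z+\bb,4)$-CSCK metric forces that Futaki invariant to vanish. Unwinding the definitions preceding Theorem~\ref{thm:admissible-stable}, condition (ii) reads ``$F_{\Omega,\bb,4}>0$ on $(-1,1)\cap\Q$ and the Futaki invariant vanishes'', while (iii) reads ``$F_{\Omega,\bb,4}>0$ on all of $(-1,1)$ and the Futaki invariant vanishes''. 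Since $\Omega$ is rational (being a positive multiple of $c_1(L)$), $F_{\Omega,\bb,4}$ is an explicit polynomial; from the Calabi-ansatz construction of \cite{acgt} it vanishes at $z=\pm1$ with $F_{\Omega,\bb,4}'(\pm1)\ne0$, so that $F_{\Omega,\bb,4}(z)=(1-z^2)\,Q(z)$ with $Q$ quadratic and $Q(\pm1)=p_{\mathrm{c}}(\pm1)>0$.

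With this dictionary three of the four implications are straightforward. The inclusion $(-1,1)\cap\Q\subset(-1,1)$ gives (iii)$\Rightarrow$(ii) at once, and (iii)$\Rightarrow$(i) is precisely the existence statement for constant $(z+\bb,4)$-scalar curvature metrics in the first item of Theorem~\ref{thm:admissible-stable}. For (i)$\Rightarrow$(iii) I would invoke the strengthened non-existence result of \cite{lahdili2}, which holds for geometrically ruled surfaces over a curve of genus $\ge2$ with $\p=4$ and requires no rationality of $\Omega$: it states that $\Omega$ carries no $(z+\bb,4)$-CSCK metric as soon as $F_{\Omega,\bb,4}\le0$ at some point of $(-1,1)$. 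Contrapositively, the existence of a $(z+\bb,4)$-CSCK metric forces $F_{\Omega,\bb,4}>0$ throughout $(-1,1)$; together with the automatic vanishing of the Futaki invariant this is exactly (iii). At this point (i)$\Leftrightarrow$(iii), and both imply (ii).

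It remains to prove the one delicate implication (ii)$\Rightarrow$(iii), that is, to upgrade strict positivity on the rationals to strict positivity on the whole interval. Continuity of $F_{\Omega,\bb,4}$ and density of $\Q\cap(-1,1)$ give $F_{\Omega,\bb,4}\ge0$ on $(-1,1)$ immediately. Because $Q(\pm1)>0$ and $Q$ is quadratic, $F_{\Omega,\bb,4}$ cannot change sign in the interior: if positivity fails, $Q$ must acquire two interior zeros counted with multiplicity. Two \emph{distinct} interior zeros would make $F_{\Omega,\bb,4}$ strictly negative on the subinterval between them, hence negative at some rational point, contradicting (ii). Thus the only way (iii) can fail while (ii) holds is that $Q$ has a \emph{double} (tangential) zero $z_0\in(-1,1)$, at which $F_{\Omega,\bb,4}$ touches $0$ from above. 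The essential difficulty is that such a tangency at an \emph{irrational} $z_0$ is invisible to evaluation at rational $\z$, so it cannot be excluded by the stability condition (ii) alone.

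To remove this last possibility I would show that the tangential configuration simply does not occur along the locus where the Futaki invariant vanishes. Concretely, I would solve explicitly the ordinary differential equation characterizing the $(z+\bb,4)$-CSCK condition in the Calabi ansatz---an Euler-type (equidimensional) equation in the variable $z+\bb$ whose polynomial solution is $F_{\Omega,\bb,4}$---impose the boundary data $Q(\pm1)=p_{\mathrm{c}}(\pm1)$ together with the vanishing of the Futaki invariant, and read off the leading coefficient of $Q$ as an explicit function of the parameters of $\Omega$ and of $\bb$. A double interior zero forces this coefficient to take the distinguished value for which the vertex of $Q$ lies in $(-1,1)$; I would then check, using this explicit expression and the classification of conformally K\"ahler, Einstein--Maxwell metrics on such ruled surfaces in \cite{KoTo16}, that this value is incompatible with the admissibility and rationality of $\Omega$ for $\bb>1$ over a base of genus $\ge2$. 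This computation---showing that the tangency locus avoids the admissible rational classes---is the heart of the matter and the step I expect to be the main obstacle; once it is in place, (ii) forces $F_{\Omega,\bb,4}>0$ on all of $(-1,1)$, giving (iii), and combined with (iii)$\Rightarrow$(ii) and (i)$\Leftrightarrow$(iii) this establishes the equivalence of all three conditions.
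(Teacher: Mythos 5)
Your reduction of all three conditions to the sign of $F_{\Omega,\bb,4}$ together with the vanishing of the Futaki invariant is exactly right, and your three easy implications match the paper: (iii)$\Rightarrow$(ii) by inclusion of $\Q\cap(-1,1)$ in $(-1,1)$, (iii)$\Rightarrow$(i) by Proposition~\ref{emsolution} (via Theorem~\ref{thm:admissible-stable}), and (i)$\Rightarrow$(iii) from the strengthened non-existence statement of \cite{lahdili2} for ruled surfaces of genus $\ge 2$ with $\p=4$ (the paper packages this slightly differently, quoting \cite[Cor.~2]{lahdili2} together with \cite{KoTo16} and the deformation result of \cite{lahdili}, but the content is the same). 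You also correctly isolate the crux of (ii)$\Rightarrow$(iii): writing $F_{\Omega,\bb,4}(z)=(1-z^2)Q(z)$ with $Q$ quadratic and $Q(\pm1)>0$, the only failure mode consistent with strict positivity at rational points is a double \emph{irrational} root of $Q$ in $(-1,1)$, which rational admissible test configurations cannot see. Up to this point your argument coincides with the paper's.

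However, at precisely this decisive step your proposal has a genuine gap, in two respects. First, you do not carry the step out: you state that excluding the tangential configuration ``is the heart of the matter and the step I expect to be the main obstacle,'' so the proof is incomplete where it matters. Second, the strategy you sketch aims at the wrong statement. You propose to show that the tangency locus avoids the admissible rational classes altogether, i.e.\ that for rational $x$ with $A_1=0$ no double root of $Q$ in $(-1,1)$ can occur. That is stronger than what is needed and is not what the paper proves; a double root at rational data is \emph{not} excluded by the geometry --- it is excluded by hypothesis (ii) itself, since a rational double root $z_0$ gives $F_{\Omega,\bb,4}(z_0)=0$ at a rational point. What must be shown is only that an \emph{irrational} double root is impossible, and the paper's mechanism for this is arithmetic rather than a discriminant-versus-parameters incompatibility: the condition $A_1=0$ forces $\bb$ to solve \eqref{a-x}, and the double-root condition forces $D_s(x)=0$ as in \eqref{x0}; since $x$ is rational (polarization), one shows from \eqref{x0} that either $\sqrt{1-x^2}$ is rational --- whence $\bb=(1+\sqrt{1-x^2})/x$ is rational --- or else $f_1(x)=f_2(x)=0$, which is ruled out by direct computation. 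Rationality of both $x$ and $\bb$ makes all coefficients of $F_{x,\bb,4}$ rational, and since $\pm1$ are roots, any double root of the residual quadratic is rational, contradicting its assumed irrationality. Your plan of ``reading off the leading coefficient of $Q$ and checking incompatibility with admissibility and rationality'' contains no analogue of this rationality argument, and as stated it would likely fail, because the incompatibility you hope for need not hold: the equations \eqref{a-x} and \eqref{x0} can in principle be satisfied simultaneously by rational $x$ (e.g.\ when $\sqrt{1-x^2}\in\Q$), in which case the tangency does occur on a rational class --- harmlessly, since the double root is then rational and (ii) already fails there.
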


\subsection{Structure of the paper} Section~\ref{review} contains the main technical body of the paper.  After a brief review  of the admissible setting of \cite{acgt}, we summarize in Proposition~\ref{emsolution} the main tools allowing us to  extend the theory of \cite{acgt} from extremal to weighted extremal metrics. With this in place, we compute in Proposition~\ref{futaki} the weighted Futaki invariant, and in Proposition~\ref{mabuchi} the relative weighted Mabuchi  functional, associated to admissible K\"ahler metrics. The upshot is Proposition~\ref{Lahdili} which links the function $F_{\Omega, \bb, \p}(z)$ of Theorem~\ref{A} to the boundedness from below of the weighted Mabuchi functional. In Section~\ref{beta-futaki}, we explore the notion of weighted K-stability proposed in  \cite{am} and compute, using the method of \cite{sz} and \cite{acgt},  the (relative) weighted  Donaldson--Futaki invariant of an admissible test configuration (a special case of the degeneration to the normal cone studied in \cite{RT}).  In Section~\ref{proofs}, we present the proofs of the main results  from the Introduction. In the final Section~\ref{s:EM}, we specialize our existence results to the case of conformally K\"ahler, Einstein--Maxwell metrics. In particular, we obtain a large family of new examples in  any real dimension $\ge 6$, see Example \ref{negative-scal}, Propositions~\ref{O(-1,1)}, \ref{hodge}, and \ref{E-extended}.  In  Section~\ref{YamabeFun}, we show that some of the Einstein--Maxwell metrics (which have positive constant scalar curvature) are not Yamabe minimizers even though they satisfy the strict Aubin--Schoen  inequality. Finally, local existence and uniqueness results for {\em orthotoric} $(f,\p)$-extremal metrics are given in an appendix.

\section{Admissible manifolds and metrics}\label{review}
In this section we  review a well-known construction, originally used by Calabi~\cite{calabi} to construct extremal K\"ahler metrics on the Hirzebruch complex surfaces, and  generalized in many subsequent works~\cite{Guan95,Hwang94,hwang-singer,koi-sak1,sakane,christina1} to the case of  $\C P^1$-bundles over a CSCK base and their blow-downs.  We follow closely the notation in \cite{acgt} which combines in the same formalism the momentum profile construction of \cite{hwang-singer} and the blow-down  construction of \cite{koi-sak1,sakane}.

\subsection{Admissible manifolds}
 Let $(S, g_S, \omega_S)$ be a  compact K\"ahler  manifold covered by the product of simply connected K\"ahler manifolds $(S_a, \pm g_a, \pm \omega_a), a \in \cA \subset \Z^+$, where $\cA=\{1, \ldots, N\}$ is a finite index set and $(\pm g_{a},\pm
\omega_{a})$
are the K\"ahler structures with the usual sign ambiguity allowing for $g_a$ and $\omega_a$ to be negative definite tensors (in which case we write $(-g_a, -\omega_a)$ for the K\"ahler structure on $S_a$).
In all our applications, each $\pm g_a$ is assumed to have constant scalar curvature, i.e. $(g_S, \omega_S)$ is a constant scalar curvature metric (CSCK) on $S$.
The real dimension of each component is denoted
$2 d_{a}$, while the scalar curvature $Scal(\pm g_a)$ of $\pm g_{a}$ is written as $\pm 2 d_{a} s_{a}$, where $\pm s_{a}$ is the {\it normalized scalar curvature} of $\pm g_a$.

Let  $E_0$, $E_\infty$ be projectively flat
hermitian holomorphic vector bundles over $S$, of ranks $d_{0}+1$ and $d_{\infty} +1$,
respectively,   satisfying the condition
\begin{equation}\label{topological}
c_{1}(E_{\infty})/(d_{\infty}+1) -
c_{1}(E_{0})/(d_{0}+1) = \sum_{a \in\cA} [\omega_{a}/2\pi].
\end{equation}
Then, following \cite{acgt}, the total space of the projectivization
$M=P(E_{0} \oplus E_{\infty})\xmapsto{p} S$ is called an {\it admissible complex manifold}.

 A useful observation is the following diagram of holomorphic  maps
 \begin{equation}\label{blowdown}
\begin{diagram}
\hat M & = {P} \Big({\hat E}_0 \oplus  {\hat E}_{\infty}\Big) &\rTo^{\hat p} & {\hat S} &={\mathbb P}(E_0)\times_S {\mathbb P}(E_{\infty}) \\
\dTo^{b} &                                                                            &                    & \dTo^{\pi} &  \\
M        &= {P}\Big(E_0\oplus E_{\infty}\Big) & \rTo^{p} & S, &
\end{diagram}
\end{equation}
where ${\hat E}_0=\cO(-1)_{{P}(E_0)}$ and ${\hat E}_{\infty}=  \cO(-1)_{{P}(E_{\infty})}$ are line bundles over $\hat S$. The latter in turn  is a CSCK manifold covered by the K\"ahler product $\C P^{d_0} \times \Big( \prod_{a\in \cA} S_a \Big) \times \C P^{d_{\infty}}$ where $d_0= {\rm rk}(E_0)-1$ and $d_{\infty}={\rm rk}(E_{\infty})-1$. Note that the complex dimension of $M$ is $m=\sum_{a \in \hat{\cA}}d_a+1$.

Let $\hat{\cA} \subset \mathbb{N}\cup\infty$
be the extended index set defined as follows:
\begin{itemize}
\item $\hat{\cA} = \cA$, if $d_{0}=d_{\infty}=0$.
\item $\hat{\cA} =\cA \cup \{ 0 \}$, if $d_{0}>0$ and $d_{\infty}=0$.
\item $\hat{\cA} =\cA \cup \{ \infty \}$, if $d_{0}=0$ and $d_{\infty} >0$.
\item $\hat{\cA} =\cA \cup \{ 0 \} \cup \{ \infty \}$, if $d_{0} > 0$
and $d_{\infty} >0$.
\end{itemize}
Correspondingly, we consider on $\hat S$ the  CSCK metric covered by the product  of the  K\"ahler metrics $(\pm g_a, \pm \omega_a), a\in \hat \cA$ with $(g_{0}, \omega_{0})$  and $(-g_{\infty},-\omega_{\infty})$ being
the Fubini--Study structures with scalar curvatures
$2d_{0}(d_{0} +1)$  and $2d_{\infty}(d_{\infty}+1)$ on  the factors $\C P^{d_0} $ and $\C P^{d_{\infty}},$ respectively.

We will consider the $\C^*$-action on $M$ (resp. on $\hat M$),  defined by diagonal multiplication on $E_0$ (resp. on ${\hat E}_0$) and denote by $M_0$ (resp. ${\hat M}_0$) the open dense subset of regular points of the action. It is not hard to see that the first vertical map in the diagram is a $\C^*$-biholomorphism from $\hat M_0$ to $M_0$, and is referred to in \cite{acgt} as the {\it blown-down} map. In the sequel,  we shall tacitly identify $M_0$ and $\hat M_0$; in particular, $M_0$  has the structure of a principal $\C^*$-bundle over the (stable) quotient under the $\C^*$-action of $M$ and,  in our case,  using the blow down map $b$ in \eqref{blowdown}, it corresponds to the $\C^*$-bundle over $\hat S$,  obtained from the $\C P^1$-bundle $\hat p: \hat M \to \hat S$ by deleting the divisors ${\hat e}_{\infty}= P(0 \oplus {\hat E}_{\infty})$ and ${\hat e}_{0}=P({\hat E}_0 \oplus 0)$.

\subsection{Admissible metrics}\label{s:admissible-metrics}
A particular type of K\"ahler metric on $M$,  also called
{\it admissible}, will now be described as smooth tensors on $M_0$.

Using the assumption  that $E_0$ and $E_{\infty}$ are projectively flat and \eqref{topological}, there exist hermitian metrics $h_0$ on $E_0$ and $h_{\infty}$ on $E_{\infty}$ whose  respective Chern connections  have curvatures
$\Omega_0 \otimes {\rm Id}_{E_0}$ and  $\Omega_{\infty}\otimes {\rm Id}_{E_{\infty}}$, with $\Omega_0$ and $\Omega_{\infty}$ being $2$-forms on $S$ satisfying (when pull-backed to the universal cover of $S$)
$$\Omega_{\infty} - \Omega_0 = \sum_{a\in \cA} \omega_a.$$
The hermitian metrics $h_0$ and $h_{\infty}$ induce hermitian metrics $\hat h_0$ and $\hat h_{\infty}$ on the line bundles $\hat E_0=\cO(-1)_{P(E_0)}\to \hat S$ and $\hat E_{\infty}=\cO(-1)_{P(E_{\infty})}\to \hat S,$ with curvatures $2$-forms $-\omega_0 + \Omega_0$ and $\omega_{\infty} + \Omega_{\infty}$, respectively.  We denote respectively by $K_{0}$ and $K_{\infty}$ the generators of the circle action on $\hat E_0$ and $\hat E_{\infty},$ and by $r_0$ and $r_{\infty}$ the corresponding fibre-wise norm functions.  Using the Chern connections of $(\hat E_{0}, \hat h_0)$  and $(\hat E_{\infty}, \hat h_{\infty})$, we let $\hat \theta_0$ and $\hat \theta_{\infty}$ be the connection 1-forms defined on the corresponding unitary bundles, i.e. satisfying
\begin{equation*}
\begin{split}
\hat \theta_0(K_0) &=1,  \ \ d\hat \theta_0 = \omega_0 - \Omega_0; \\
\hat  \theta_{\infty} (K_{\infty})&=1,  \ \ d\hat \theta_{\infty} = -\omega_{\infty} - \Omega_{\infty}.
 \end{split}
 \end{equation*}
Thus, the fibres-wise euclidean structures (viewed as tensors on the total spaces $\hat E_0$ and $\hat E_{\infty}$) take the following momentum/angular form
\begin{equation*}
\hat g_0=\frac{{dz_0}\otimes dz_0}{2z_0} + 2z_0 (\hat \theta_0\otimes \hat \theta_0), \ \ \hat g_{\infty}=\frac{dz_{\infty}\otimes dz_{\infty}}{2z_{\infty}} + 2z_{\infty} (\hat \theta_{\infty}\otimes \hat \theta_{\infty}),
\end{equation*}
where  $z_0:= r^2_0/2,  z_{\infty} := r_{\infty}^2/2$ are the fibre-wise momentum coordinates.

For each $a\in \cA$ let $|x_a|<1$  be fixed real numbers such that $x_ag_a$ is positive definite, and  $x_{0}:=1$, $x_{\infty} : = -1$. We then  consider the smooth positive semidefinite tensor  on the total space $\hat E_0 \oplus \hat E_{\infty}$
\begin{equation*}
\hat g =\sum_{a\in\hat{\cA}}\frac{(1+x_a)z_0+ (1-x_a)z_{\infty})}{2x_a}g_a + \hat g_{0} + \hat g_{\infty}.
\end{equation*}
Considering the ``K\"ahler quotient''  for $\hat g$  with respect to the $\Sph^1$-action generated by $K_0 + K_{\infty}$ at the  level set $z_0 + z_{\infty}=2$ on $\hat E_0 \oplus \hat E_{\infty}$, we denote by  $g_c$ the smooth (possibly degenerate) tensor field  induced on $\hat M = P(\hat E_0 \oplus \hat E_{\infty})$ and by $\omega = g_c J_c$ the corresponding smooth $(1,1)$-form, where $J_c$ is the induced (canonical) complex structure on $\hat M$. Letting $z:=(z_0-z_{\infty})/2\in [-1,1]$, the degenerate K\"ahler structure $(g_c, \omega)$ is written  on $\hat M_0$ as:
\begin{equation}\label{g}
g_c=\sum_{a\in\hat{\cA}}\frac{1+x_az}{x_a}g_a+\frac {dz^2}
{\Theta_c (z)}+\Theta_c (z)\theta^2,\quad
\omega = \sum_{a\in\hat{\cA}}\frac{1+x_az}{x_a}\omega_{a} + dz \wedge
\theta,
\end{equation}
where $\Theta_c (z)= 1-z^2$ and $\theta := \hat{\theta}_{0}- \hat{\theta}_{\infty}$ satisfies
\begin{equation}\label{theta}
d\theta = \sum_{a\in \hat \cA} \omega_a.
\end{equation}
We notice that $z$ is the momentum map with respect to $\omega$ of the induced $\Sph^1$-action on $\hat M$ corresponding to multiplication  on $\hat E_0$ or,  equivalently, the $\Sph^1$-action induced by the push forward of $K=(K_0-K_{\infty})/2$ to the quotient space $\hat M$. Thus,  $\hat e_{\infty} = z^{-1}(-1), \hat e_{0} = z^{-1}(1),$ and $\hat M_0 = z^{-1}(-1,1)$. It follows that $(g_c, \omega)$ defines a K\"ahler metric on $\hat M_0$,  which degenerates over $\hat e_0 \cup \hat e_{\infty}$ when $\hat \cA \neq \cA$. Nevertheless,  it is shown in \cite{acgt} that $(g_c, \omega)$ gives rise to a genuine, non-degenerate, smooth K\"ahler metric on $M=P(E_0 \oplus E_{\infty})$  where we identify $M_0$ with $\hat M_0$ via \eqref{blowdown}. Then, $z$ is the momentum map  with respect to $\omega$  of the $\Sph^1$-action on $M$ by multiplication on  $E_0$. The formulae \eqref{g} and \eqref{theta} describe the pull-back of $(g_c, \omega)$ to $\hat M$  via the map $b$  in \eqref{blowdown}.

It was observed in \cite{acgt} (the argument actually goes back to \cite{hwang-singer}) that if instead of $\Theta_c(z)$ we take in \eqref{g} any smooth function $\Theta(z)$ on $[-1,1]$, satisfying
\begin{align}
\label{positivity}
(i)\ \Theta(z) > 0, \quad -1 < z <1,\quad
(ii)\ \Theta(\pm 1) = 0,\quad
(iii)\ \Theta'(\pm 1) = \mp 2.
\end{align}
then the formulea
\begin{equation}\label{gg}
g=\sum_{a\in\hat{\cA}}\frac{1+x_az}{x_a}g_a+\frac {dz^2}
{\Theta (z)}+\Theta (z)\theta^2,\quad
\omega = \sum_{a\in\hat{\cA}}\frac{1+x_az}{x_a}\omega_{a} + dz \wedge
\theta,
\end{equation}
and \eqref{theta} introduce a smooth $\Sph^1$-invariant K\"ahler metric on $M$, compatible with the same symplectic form $\omega$. The corresponding  complex structure  is then given on $\hat M_0=M_0$ by the horizontal lift  of the base complex structure on $\hat S$ (with respect to the chosen Chern connections on $\hat E_0$ and $\hat E_{\infty}$) along with the requirement $Jdz = \Theta \theta$ on the fibres. Such K\"ahler metrics on $M$ are called {\it admissible K\"ahler metrics}, and  we denote by $\mathcal{K}^{\rm adm}(M,\omega)$ the space of all admissible K\"ahler metrics associated to a given choice of $x_a, a\in \cA$. Thus, $\mathcal{K}^{\rm adm}(M,\omega)$ is a Fr\'echet space consisting of all smooth functions $\Theta(z)$ on $[-1,1]$ satisfying \eqref{positivity}.  For fixed values $x_a \in (-1,1)$ (and $g_a$), we let  $\Omega_x :=[\omega]$ be the corresponding deRham class on $M$,  which we refer to as an  {\it admissible K\"ahler class} on $M$.  Note that from Section 1.3 in \cite{acgt}, it follows that $\Omega_x$ is  a positive multiple of an element in $H^2(M,{\mathbb Z})$ precisely when $x_a \in {\mathbb Q}$ for all $a \in \cA$.

 The space $\mathcal{K}^{\rm adm}(M, \omega)$  (associated  to a given data  $(x_a, g_a)$)    can also be equally parametrized by the fibre-wise symplectic potentials $u(z)$, where $u(z)$ is defined up to an affine-linear  function of $z$ by  $u''(z)= \frac{1}{\Theta(z)}$. It is shown in \cite[p. 566]{acgt} that the fibre-wise Legendre transform ${\mathcal T}$ maps  $\mathcal{K}^{\rm adm}(M, \omega)$ to the space $\mathcal{K}(M, J_c, \Omega)=\{\varphi \in C^{\infty}(M) : \omega + dd^c \varphi >0\}$ of  $J_c$-compatible K\"ahler metrics in the class $\Omega_x=[\omega]$,    and has differential given by
\begin{equation} \label{symplectic/complex}
({\bf d} {\mathcal T})_{g}(\dot u) \to -\dot \varphi
\end{equation}

\begin{remark}\label{r:symmetry}
The parametrization of the space  $\mathcal{K}^{\rm adm}(M, \omega)$ of admissible K\"ahler metrics  in terms of $(x_a, g_a), a \in \hat \cA$ and $\Theta(z)$ is not effective. Indeed,  any admissible K\"ahler metric $(g, \omega) \in {\mathcal K}^{\rm adm}(M, \omega)$  of the form \eqref{gg} can be also obtained by changing $g_a$ to $-g_a$ for $a\in \hat A$, $z$ to $-z$ and $\theta$ to $-\theta$. Geometrically, this corresponds simply to changing the r\^oles of  the vector bundles $E_0$ and $E_{\infty}$ or,  equivalently, changing the generator $K$ of the $\Sph^1$-action by multiplications on $E_0$ with $-K$. Notice that $-K$ is the generator of the $\Sph^1$-action on $P(E_0 \oplus E_{\infty})$ corresponding to multiplications on $E_{\infty}$.
\end{remark}

In order to simplify various curvature computations, it is useful to introduce the function
\begin{equation}\label{param}
F(z):= \Theta(z)\Mpc(z),
\end{equation}
where
$\Mpc(z) = \prod_{a \in \hat{\cA}} (1 + x_{a} z)^{d_{a}}$ and $d_a = {\rm dim}_{\C}(S_a)$ with $S_0 :=\C P^{d_0}$ and $S_{\infty}=\C P^{d_{\infty}}$.
The conditions \Ref{positivity}  imply
\begin{align}
\label{positivityF}
(i)\ F(z) > 0, \quad -1 < z <1,\quad
(ii)\ F(\pm 1) = 0,\quad
(iii)\ F'(\pm 1) = \mp 2p_c(\pm1).
\end{align}
When $\cA=\hat \cA$, \eqref{positivityF} is equivalent to \eqref{positivity}.

\smallskip
Letting $s_a := \frac{\pm Scal(\pm g_a)}{2d_a}$ be the normalized scalar curvatures of $S_a$ for $a\in \cA$ and $ s_{0} := d_{0} + 1, s_{\infty} :=
-(d_{\infty} +1)$, we recall the  following facts (see e.g. \cite{haml}).
\begin{lemma}\label{laplace}
For any admissible metric $g$, if $S(z)$ is a smooth function of z,
then
\begin{equation}
\label{Lapl}
\Delta_g S = -[F(z) S'(z)]'/p_c(z),\end{equation}
where $\Delta_g$ is the Laplacian of $g$, whereas the scalar curvature of  $g$ is given by
\begin{equation}
Scal(g) =  \sum_{a \in \hat{\cA}}
\frac{2 d_a s_a x_a }{1+x_az} - \frac{F''(z)}{\Mpc(z)}.
\label{scalarcurvature}
\end{equation}
\end{lemma}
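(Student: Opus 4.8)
The plan is to prove both identities by separating the base directions $S_a$ from the fibre direction parametrized by $(z,\theta)$, exploiting that all the tensors involved depend on the fibre only through the momentum variable $z$. For the Laplacian formula I would first record the volume form: expanding the top power of $\omega$ in \eqref{gg}, the only surviving term contains $dz\wedge\theta$ exactly once (as $(dz\wedge\theta)^2=0$) together with $\prod_{a}\omega_a^{d_a}$, so $\frac{\omega^m}{m!}=\Mpc(z)\,\nu$, where $\nu=\big(\prod_{a\in\hat\cA}x_a^{-d_a}\omega_a^{d_a}/d_a!\big)\wedge dz\wedge\theta$ is $z$-independent and $\Mpc(z)=\prod_{a}(1+x_az)^{d_a}$ is the $z$-dependent density. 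Since the $z$-direction is $g$-orthogonal to the base and to $\theta$, with $g_{zz}=1/\Theta(z)$ and hence $g^{zz}=\Theta(z)$, for $S=S(z)$ one has $\grad_g S=\Theta(z)S'(z)\,\partial_z$, and the divergence formula for the (positive) Laplacian gives $\Delta_g S=-\tfrac{1}{\Mpc}\big(\Mpc\,\Theta\,S'\big)'=-\tfrac{1}{\Mpc}\big(F\,S'\big)'$, using $F=\Theta\Mpc$. This is \eqref{Lapl}.

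For the scalar curvature I would use the Kähler identity $Scal(g)=2\Lambda_\omega\rho$, with $\rho$ the Ricci form, and establish the decomposition $\rho=\sum_{a\in\hat\cA}\rho_a-\tfrac12\,dd^c\log F(z)$, where $\rho_a$ is the (pulled-back) Ricci form of $(S_a,g_a)$. To see this I would pass to local holomorphic coordinates $(w_a,\zeta)$ adapted to the bundle, in which the fibre metric takes the conformal form $\Theta(z)\,|d\zeta|^2$ (a consequence of the relation $J\,dz=\Theta\,\theta$). In these coordinates $\det(g_{i\bar j})$ factors, up to a $z$-independent constant, as $F(z)\prod_{a}\det\big((g_a)_{i\bar j}\big)$: the factor $\Mpc$ comes from the warping coefficients $(1+x_az)/x_a$, and the factor $\Theta$ from the fibre. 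Applying $\rho=-i\partial\bar\partial\log\det(g_{i\bar j})$ and observing that $\log\det(g_a)$ is a function of the $w_a$ alone — so that its mixed $\partial_\zeta\partial_{\bar w_a}$-derivatives vanish identically — splits $\rho$ into $\sum_a\rho_a$ together with $-i\partial\bar\partial\log F=-\tfrac12\,dd^c\log F$.

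It then remains to take traces. For the base part, since the $a$-block of $g$ is $\tfrac{1+x_az}{x_a}g_a$ one has $\Lambda_\omega\rho_a=\tfrac{x_a}{1+x_az}\Lambda_{g_a}\rho_a$ and $2\Lambda_{g_a}\rho_a=Scal(g_a)=2d_as_a$, whence $2\sum_a\Lambda_\omega\rho_a=\sum_{a}\tfrac{2d_as_ax_a}{1+x_az}$; the sign conventions for the negative-definite factors $(-g_a,-\omega_a)$ are absorbed into the definitions of $s_a$ and of $x_a$. For the fibre part, a direct computation from $d\theta=\sum_a\omega_a$ (cf. \eqref{theta}) shows that $\Lambda_\omega\,dd^c\phi(z)=-\Delta_g\phi(z)$ for any $\phi=\phi(z)$, so that $-\Lambda_\omega\,dd^c\log F=\Delta_g\log F=-\tfrac1{\Mpc}\big(F(\log F)'\big)'=-\tfrac1{\Mpc}(F')'=-F''/\Mpc$ by \eqref{Lapl}. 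Adding the two contributions yields \eqref{scalarcurvature}.

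The main obstacle is the Ricci decomposition $\rho=\sum_a\rho_a-\tfrac12\,dd^c\log F$: one must verify that the warping of the base metrics by the $z$-dependent factors $(1+x_az)/x_a$ and the non-integrability of the horizontal distribution (encoded in $d\theta=\sum_a\omega_a$, i.e. the curvature of the Chern connections on $\hat E_0,\hat E_\infty$) conspire to produce exactly this clean splitting, with no surviving cross terms between base and fibre. The cleanest way to control this is the observation above that $\log\det(g_a)$ depends on the base coordinates only, so its mixed holomorphic derivatives in the fibre coordinate vanish regardless of how $z$ itself depends on the holomorphic coordinates; the remaining bookkeeping of the $\pm$-sign conventions for the negative factors is routine but must be carried out carefully.
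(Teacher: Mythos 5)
Your proof is correct, but note that the paper itself offers no proof of this lemma: it simply recalls the two formulas from the hamiltonian $2$-forms literature (``see e.g.~\cite{haml}''), so there is no internal argument to compare against. Your derivation is essentially the standard one behind the cited references (the momentum construction of \cite{hwang-singer} and the order-one case of \cite{haml}): the volume density $\omega^m/m! = p_c(z)\,\nu$ with $\nu$ $z$-independent, together with $\mathrm{grad}_g S = \Theta(z)S'(z)\partial_z$, gives \eqref{Lapl} -- and it is worth making explicit that in the divergence computation the term produced by $d\theta=\sum_a\omega_a$ dies because $\nu$ already contains the top exterior power of each $\omega_a$; while the Ricci decomposition $\rho=\sum_a\rho_a-\tfrac12 dd^c\log F(z)$, which you rightly single out as the only delicate point, holds because in adapted holomorphic coordinates the metric matrix differs from the block form $\mathrm{diag}\big(\tfrac{1+x_az}{x_a}(g_a)_{i\bar j},\,\Theta\big)$ by a unipotent (connection-induced) frame change of determinant one, so $\det(g_{i\bar j})$ factors as $C\,F(z)\prod_a\det\big((g_a)_{i\bar j}\big)$ exactly as you claim. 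Your trace bookkeeping is also right: the trick $F(\log F)'=F'$ turning $\Delta_g\log F$ into $-F''/p_c$ via \eqref{Lapl} is precisely what reconciles the two identities, your convention $\Lambda_\omega dd^c\phi=-\Delta_g\phi$ matches the paper's positive Laplacian (as one can check on the round example $F=1-z^2$, $p_c=1$), and the sign discussion for the factors $(-g_a,-\omega_a)$ is sound since the Ricci form is unchanged under $g_a\mapsto -g_a$, so each block contributes $\tfrac{2d_as_ax_a}{1+x_az}$ regardless of sign. The only cosmetic addition I would make is to say that the computation takes place on the dense open set $M_0=z^{-1}(-1,1)$ and that both identities, being equalities of smooth functions on $M$, extend by continuity across $z^{-1}(\pm1)$, which covers the blow-down locus when $\hat\cA\neq\cA$.
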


\subsection{Admissible $(z+\bb, \p)$-extremal K\"ahler  metrics}
We now make the assumption that $g$ in \eqref{scalh1} is admissible and that the positive Killing potential $f$  is of the form $f=|z+\bb|$ for some constant $\bb \in {\mathbb R}$ such that $|\bb|>1$. In view of Remark~\ref{r:symmetry}, we shall assume (without loss of generality) that $\bb>1$.  It follows from \eqref{scalarcurvature} and Lemma~\ref{laplace} that the $(z+\bb, \p)$-scalar curvature is given by
\begin{equation}\label{scalh2}
\begin{array}{ccl}
Scal_{z+\bb,\p}(g) & = & \frac{-(z+\bb)^2 F''(z) + 2(\p-1)(z+\bb) F'(z) - \p(\p-1) F(z)}{\Mpc(z)} \\
\\
&+ &(z+\bb)^2 \sum_{a \in \hat{\cA}}
\frac{2 d_a s_a x_a }{1+x_az}.
\end{array}
\end{equation}
Thus, $g$  is  $(z+\bb,\p)$-extremal  in the sense of \cite{am,aclg17}, provided that
\begin{equation}\label{em1}
\begin{array}{cl}
& -(z+\bb)^2 F''(z) + 2(\p-1)(z+\bb) F'(z) - \p(\p-1) F(z)\\
\\
 =&  (A_1 z+A_2)\Mpc(z) - \Mpc(z)  (z+\bb)^2 \sum_{a \in \hat{\cA}}
\frac{2 d_a s_a x_a }{1+x_az}.
\end{array}
\end{equation}
Notice that $A_1=0$  in  \eqref{em1} is equivalent to $g$  having a constant $(z+\bb, \p)$ scalar curvature.

Next we will view the equation \eqref{em1} in an alternative way that will enable us to ensure the existence of a unique solution satisfying the boundary conditions (ii) and (iii) of \eqref{positivityF}. On closer inspection of the left hand side of \eqref{em1} we see that it equals $$-(z+\bb)^{\p+1}\frac{d^2}{dz^2}\left[ \frac{F(z)}{(z+\bb)^{\p-1}}\right]$$ and hence
\eqref{em1} is equivalent to
\begin{equation}\label{emNew}
\frac{d^2}{dz^2}\left[ \frac{F(z)}{(z+\bb)^{\p-1}}\right]=\frac{ \Mpc(z)}{(z+\bb)^{\p-1}} \sum_{a \in \hat{\cA}}\frac{2 d_a s_a x_a }{1+x_az}-\frac{ (A_1 z+A_2)\Mpc(z)}{(z+\bb)^{\p+1}}
\end{equation}
Letting
\begin{equation}\label{FG}
G(z) : =  \frac{F(z)}{(z+\bb)^{\p-1}} = \frac{\Theta(z)\Mpc(z)}{(z+\bb)^{\p-1}}
\end{equation}
and \begin{equation}\label{fbp}
Q(z) = \frac{ \Mpc(z)}{(z+\bb)^{\p-1}} \Big(\sum_{a \in \hat{\cA}}\frac{2 d_a s_a x_a }{1+x_az}\Big)-\frac{ (A_1 z+A_2)\Mpc(z)}{(z+\bb)^{\p+1}},
\end{equation}
we obtain the ODE
\begin{equation}\label{G-e}
G''(z) = Q(z)
\end{equation}
with boundary conditions
\begin{align}
\label{endG}
(i)\ G(\pm 1) = 0,\quad
(ii)\ G'(\pm 1) = \mp \frac{2p_c(\pm1)}{(\bb\pm 1)^{\p-1}}.
\end{align}
It is not hard to see that \eqref{G-e}-\eqref{endG} has a solution  if and only if
\begin{equation}\label{compatibility}
\begin{split}
& \frac{4p_c(-1)}{(\bb- 1)^{\p-1}}  + \int_{-1}^{1} Q(t) (1-t)\,dt=0,\\
&  \frac{2p_c(-1)}{(\bb- 1)^{\p-1}}+ \int_{-1}^{1} Q(t) \,dt =  -\frac{2p_c(1)}{(\bb+ 1)^{\p-1}},
\end{split}
\end{equation}
in which case the solution is
\begin{equation}\label{G}
G(z)= \frac{2p_c(-1)}{(\bb- 1)^{\p-1}} (z+1) + \int_{-1}^{z} Q(t) (z-t)\,dt.
\end{equation}
It is now just a matter of technical detailing to see that the necessary and sufficient conditions \eqref{compatibility} for the existence of the solution $G(x)$  above in fact  determine the constants $A_1$ and $A_2$, via the  linear system
\begin{equation}\label{A1andA2}
\begin{array}{ccc}
\alpha_{1, -(1+\p)} A_1 + \alpha_{0, -(1+\p)} A_2 =  2 \beta_{0, (1-\p)}\\
\\
\alpha_{2, -(1+\p)} A_1 + \alpha_{1, -(1+\p)} A_2 =  2\beta_{1, (1-\p)},
\end{array}
\end{equation}
where
\begin{equation}\label{alpha-beta-r-q}
\begin{split}
\alpha_{r,q} =& \int_{-1}^1 (t+ \bb)^q t^r p_c(t)dt \\
\beta_{r,q}   = &  \int_{-1}^1\Big(\sum_{a\in \hat{\cA}} \frac{x_ad_as_a}{1+x_at}\Big)  t^r p_c(t) (t+ \bb)^q dt \\
                        & + \big((-1)^r(\bb-1)^qp_c(-1) + (1+\bb)^q p_c(1)\big).
                         \end{split}
                         \end{equation}
Since $\alpha_{1,q}^2 < \alpha_{0,q}\alpha_{2,q}$,  \eqref{A1andA2} has a unique solution.  It  follows that the functions $Q(z)$  given by \eqref{fbp} and $G_{x,\bb, \p}(z):=G(z)$ with $G$ given by \eqref{G} are entirely determined from the data $(x_a, a \in \cA, \bb, \p)$, and thus are invariants of the admissible K\"ahler class $\Omega=\Omega_x$ on $M$.

We now note that if $F(z)$ satisfies \eqref{emNew}, or equivalently, $G(z)=F(z)/(z+\bb)^{\p-1}$ satisfies \eqref{G}, then $G''(z) = p_c'(z) \Upsilon (z)$ with $\Upsilon(-1)=2(d_0+1)/(\bb-1)^{\p-1}$, if $d_0>0$, and $\Upsilon(1) = -2(d_\infty +1)/(\bb+1)^{\p-1}$, if $d_\infty >0$.
Hence, assuming \eqref{endG}, we have $G'(z)=p_c(z) \Psi(z)$ with $\Psi(-1) = 2(d_0+1)/(\bb-1)^{\p-1}$ and $\Psi(1) = -2(d_\infty +1)/(\bb+1)^{\p-1}$, and $\Theta(\pm 1)=0$. Now by l'H\^opital's rule, $\Theta'(\pm 1) = \mp 2$, showing that  if the function $F_{x,\bb, \p}(x):= (z+\bb)^{\p-1}G_{x,\bb, \p}(z)$ satisfies  the positivity condition \eqref{positivityF} (i), then $\Theta(z)=F_{x,\bb, \p}(z)/p_c(z)$ gives rise to an admissible $(z+\bb, \p)$-extremal K\"ahler metric. We summarize this construction in the following
\begin{prop}\label{emsolution}  Let $M=P(E_0\oplus E_{\infty})\to S$ be an admissible manifold, $\Omega=\Omega_x$ an admissible  K\"ahler class corresponding to the admissible data $(x_a, g_a), a\in \cA$, and $\bb>1$ and $\p$ two real parameters. We denote by  $A_1,A_2$ the unique solution of \eqref{A1andA2} and let $F_{\Omega, \bb, \p}(z)= F_{x,\bb, \p}(z)=(z+\bb)^{\p-1}G_{x,\bb, \p}(z)$, with $G_{x, \bb, \p}(z)$ given by \eqref{G} for $Q(z)$ given by \eqref{fbp}, be the smooth function satisfying \eqref{emNew} and \eqref{positivityF} (ii)-(iii).  If $F_{\Omega,\bb, \p}(z)$  satisfies  (i) of \eqref{positivityF},  then we have a corresponding admissible K\"ahler metric $g$ on $M$ which is $(z+\bb, \p)$-extremal with respect to  the Killing potential $f(z)=z+\bb$. Furthermore, $g$ is $(z+\bb,\p)$-CSCK if, moreover,  $A_1=0$.
\end{prop}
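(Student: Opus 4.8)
The plan is to reduce the $(z+\bb,\p)$-extremality equation for an admissible metric to a two-point boundary value problem for a scalar ODE in the momentum variable $z$, solve it explicitly, and then verify that the solution yields a genuine admissible profile. First I would recall from \eqref{scalh2} that for an admissible metric determined by $F(z)=\Theta(z)\Mpc(z)$ the quantity $Scal_{z+\bb,\p}(g)$ is a function of $z$ alone. Since $z$ is the momentum map of the $\Sph^1$-action, the field $J\grad_g z$ is Killing, so any affine function $A_1 z+A_2$ is a Killing potential; hence it suffices to arrange $Scal_{z+\bb,\p}(g)=A_1 z+A_2$ to conclude that $g$ is $(z+\bb,\p)$-extremal, and $(z+\bb,\p)$-CSCK exactly when $A_1=0$. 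Clearing the denominator $\Mpc(z)$ in \eqref{scalh2} turns this requirement into \eqref{em1}.

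Next I would use the algebraic identity that the left-hand side of \eqref{em1} equals $-(z+\bb)^{\p+1}\frac{d^2}{dz^2}\bigl[F(z)/(z+\bb)^{\p-1}\bigr]$, a direct Leibniz computation in which, after setting $G=F/(z+\bb)^{\p-1}$, the coefficients of $G$ and of $G'$ cancel identically and only the $G''$ term survives. Dividing by $-(z+\bb)^{\p+1}$ recasts \eqref{em1} as the scalar ODE \eqref{G-e}, namely $G''=Q$ with $Q$ given by \eqref{fbp}, while the normalizations \eqref{positivityF}(ii)--(iii) become the endpoint data \eqref{endG}. I would then solve this (overdetermined) problem: integrating $G''=Q$ twice from $z=-1$, using $G(-1)=0$ and $G'(-1)=2\Mpc(-1)/(\bb-1)^{\p-1}$, produces the candidate \eqref{G}; imposing the remaining two conditions at $z=+1$ yields precisely the compatibility relations \eqref{compatibility}.

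Because $Q$ depends affinely on $A_1,A_2$, the relations \eqref{compatibility} are exactly the linear system \eqref{A1andA2} in $(A_1,A_2)$, with coefficients \eqref{alpha-beta-r-q}. I would establish unique solvability by observing that its determinant is $\alpha_{1,q}^2-\alpha_{0,q}\alpha_{2,q}$ for $q=-(1+\p)$, and that the weight $(t+\bb)^q\Mpc(t)$ is strictly positive on $(-1,1)$; since $1$ and $t$ are linearly independent, strict Cauchy--Schwarz gives $\alpha_{1,q}^2<\alpha_{0,q}\alpha_{2,q}$, so the determinant is nonzero. This determines $A_1,A_2$, and hence $Q$, $G=G_{x,\bb,\p}$, and $F_{\Omega,\bb,\p}=(z+\bb)^{\p-1}G$, uniquely from the data $(x_a,\bb,\p)$, making them invariants of $\Omega_x$.

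It remains to check that, under the positivity hypothesis \eqref{positivityF}(i) on $F_{\Omega,\bb,\p}$, the profile $\Theta=F_{\Omega,\bb,\p}/\Mpc$ satisfies all of \eqref{positivity}. Positivity on $(-1,1)$ is immediate since $\Mpc>0$ there; the main obstacle is the boundary behaviour at $z=\pm1$ in the blown-down case $d_0>0$ or $d_\infty>0$, where $\Mpc$ itself vanishes at an endpoint, so $\Theta=F/\Mpc$ is a $0/0$ expression and the normalizations $\Theta(\pm1)=0$, $\Theta'(\pm1)=\mp2$ are not automatic. Here I would exploit the precise form of $Q$: the polar part of its first term at an endpoint is governed by the Fubini--Study normalizations $s_0=d_0+1$, $s_\infty=-(d_\infty+1)$, which makes $G''=Q$ factor as $G''=\Mpc'\,\Upsilon$ with $\Upsilon$ smooth up to the endpoints and $\Upsilon(-1)=2(d_0+1)/(\bb-1)^{\p-1}$, $\Upsilon(1)=-2(d_\infty+1)/(\bb+1)^{\p-1}$. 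Integrating once and using \eqref{endG} (where $G'(\pm1)=0$ when the corresponding $d$ is positive) gives $G'=\Mpc\,\Psi$ with $\Psi$ smooth and the same endpoint values, whence $\Theta(\pm1)=0$; a l'H\^opital computation then yields $\Theta'(\pm1)=\mp2$. When $\cA=\hat\cA$ this step is vacuous, as \eqref{positivityF} is equivalent to \eqref{positivity}. Finally, if $A_1=0$ the target $Scal_{z+\bb,\p}(g)=A_2$ is constant, giving the $(z+\bb,\p)$-CSCK conclusion.
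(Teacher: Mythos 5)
Your proposal is correct and follows essentially the same route as the paper's construction: the reduction of \eqref{em1} to the boundary value problem $G''=Q$ with endpoint data \eqref{endG}, the determination of $(A_1,A_2)$ through the compatibility relations \eqref{compatibility} rewritten as the linear system \eqref{A1andA2} (unique solvability via the strict Cauchy--Schwarz inequality $\alpha_{1,q}^2<\alpha_{0,q}\alpha_{2,q}$ for the positive weight $(t+\bb)^q p_c(t)$), and the endpoint analysis $G''=p_c'\,\Upsilon$, $G'=p_c\,\Psi$ followed by l'H\^opital to obtain $\Theta(\pm1)=0$ and $\Theta'(\pm1)=\mp2$ are precisely the steps in the text preceding Proposition~\ref{emsolution}. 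The only (harmless) difference is that you spell out why $A_1z+A_2$ is a Killing potential, a point the paper leaves implicit.
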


\subsection{The $(K, \bb, \p)$-Mabuchi energy and the $(K, \bb, \p)$-Futaki invariant of an admissible manifold}\label{s:mabuchi-futaki}  We start by recalling the general setting of \cite{am,lahdili}. Let $(M, J)$ be a connected, compact $2m$-dimensional K\"ahler manifold  endowed  with a K\"ahler class $\Omega$. Suppose $K$ denotes a real holomorphic vector field with zeroes which is quasi-periodic,
i.e. whose flow generates a (real) torus. We fix a positive constant $\be>0$  and, for any $K$-invariant K\"ahler metric $\omega\in \Omega$,   let   $f_{\omega, K, \be}$ be the Killing potential of $K$ with respect to $\omega$, normalized by $\int_M f_{\omega,K,\be} \omega^m/m!=\be$. It is not hard to see that with this normalization, the image $f_{\omega,K,\be}(M)$ is an interval independent of  the choice of $\omega$ in $\Omega$.  We further require that $\be$ is chosen so that $f_{\omega,K,\be}>0$.

Let $\T$ be a maximal torus in the reduced automorphism group ${\rm Aut}^r(M, J)$ of $(M,J)$ with  $K\in {\rm Lie}(\T)$, and $\mathcal{K}^{\T}(M, J, \Omega)$ denote the space of $\T$-invariant K\"ahler metrics in $\Omega$, viewed as an affine space modelled on the vector space $C^{\infty}(M, \R)^{\T}/\R$ of $\T$-invariant smooth functions modulo constants. At each point $\omega\in \mathcal{K}^{\T}(M, J, \Omega)$, we identify the corresponding tangent space
$$T_{\omega} \mathcal{K}^{\T}(M, J, \Omega) \cong \Big\{ \dot \varphi \in C^{\infty}(M, \R)^{\T}  \ \big\vert \ \int_M \dot \varphi f_{\omega,K,\be}^{-(\p+1)} \frac{\omega^m}{m!} =0\Big\}.$$
Furthermore, we denote by $P_g^{\T}(M, \R)$ the finite dimensional space of Killing potentials with respect to $g=-J\omega$ of the vector fields in ${\rm Lie}(\T)$, and for a smooth function $\varphi  \in C^{\infty}(M, \R)^{\T},$ we let $\Pi^{g, K, b, \p,\T}(\varphi)$ denote its orthogonal projection to $P_g^{\T}(M, \R)$ by using the inner product
\begin{equation}\label{inner-product-0}
\langle \varphi, \psi \rangle_{\omega, K, \be, \p} = \int_M \varphi \psi f_{\omega,K,\be}^{-(\p+1)} \frac{\omega^m}{m!}
\end{equation}
on $C^{\infty}(M, \R)^{\T}$. We shall use the following definition from \cite{lahdili}.
 \begin{defn}\label{d:mabuchi}  The {\it relative $(K, \be, \p)$-Mabuchi energy}   is  a functional
 $${\mathcal M}^{\T}_{(\Omega, K, \be, \p)} : \mathcal{K}^{\T}(M, J, \Omega) \to \R,$$ defined,  up to an additive constant, by the property
 $$\big({\boldsymbol {\rm d}}{\mathcal M}^{\T}_{(\Omega, K, \be, \p)}\big)_{\omega}(\dot \varphi)= - \int_M (Scal_{f, \p}(g))^{{\perp}_g} \dot \varphi f^{-(\p+1)} \frac{\omega^m}{m!}, $$
where $f=f_{\omega, K,\be}$ is the Killing potential of $K$ with respect to $\omega$ and $Scal_{f, \p}(g)^{{\perp}_g} := Scal_{f, \p}(g) - \Pi^{g, K, \be, \p, \T}(Scal_{f, \p}(g))$  is {\it the reduced $(f,\p)$-scalar curvature}. The Killing  vector field $Z= J {\rm grad}_g\Big(\Pi^{g, K, \be, \p, \T}(Scal_{f, \p}(g))\Big)$ is independent of $\omega \in \mathcal{K}^{\T}(M, J,\Omega)$ and is called {\it the $(K, \be, \p)$-extremal vector field} associated to $(M, J, \Omega, \T)$. It vanishes if and only if the  $(K, \be, \p)$-{\it Futaki invariant} $\mathfrak{F}_{(\Omega, K, \be, \p)}: {\rm Lie}(\T) \to \R$  defined in \cite{am} and \cite[Def.~4]{lahdili} is zero.
\end{defn}
\begin{remark} In the case when  the $(K, \be, \p)$-extremal vector field of $(M, J, \Omega, \T)$ vanishes, one can also express the differential of ${\mathcal M}^{\T}_{(\Omega, K, \be, \p)}$ as
\begin{equation*}
\big({\boldsymbol {\rm d}}{\mathcal M}^{\T}_{(\Omega, K, \be, \p)}\big)_{\omega}(\dot \varphi)= - \int_M \big({Scal}_{f, \p}(g)-c_{(\Omega,K, b, \p)}\big)\dot \varphi f^{-(\p+1)} \frac{\omega^m}{m!},
\end{equation*}
where the constant
$$c_{(\Omega, K, \be,  \p)} = \frac{\int_M {Scal}_{f, \p}(g)f^{-(\p+1)} \frac{\omega^m}{m!}}{\int_M f^{-(\p+1)} \frac{\omega^m}{m!}}$$
is independent of $\omega \in \mathcal{K}^{\T}(M, J,\Omega)$, see \cite{am,lahdili}. Thus, in this case,  ${\mathcal M}^{\T}_{(\Omega, K, \be, \p)}$ reduces to the  Mabuchi functional introduced in \cite{lahdili, lahdili2}.
\end{remark}

We now specialize to the case when $M$ is an admissible manifold with CSCK base, and $K$ is the generator of the natural $\Sph^1$-action.
It is shown in \cite[Prop. 5]{acgt}  that  an admissible K\"ahler metric $(g, \omega, J)$  is invariant under a common maximal compact connected subgroup $G \subset {\rm Aut}^r(M,J)$ with $K\in {\rm Lie}(G)$.  We thus  fix a maximal torus $\T\subset G$ with $K \in {\rm Lie}(\T)$. Notice that a Killing potential of $K$ with respect to $g$ is  given by $z+\bb$ for some $\bb \in \R$. The constant $\bb$ here is essentially the real constant $\be$ in the above general setting: indeed, $\bb$ and $\be$ are linked by an affine-linear expression which is independent of the choice of $g \in \mathcal{K}^{\rm adm}(M, \omega)$. For this reason, in the admissible context, we shall use $\bb$ instead of $\be$, thus referring to  the relative $(K, \bb, \p)$-Mabuchi energy and $(K, \bb, \p)$-Futaki invariant for the corresponding quantities defined for  a fixed $\bb>1$ (and a maximal torus $\T$ as above).

We shall first compute  the reduced  scalar curvature $Scal_{z+\bb, \p}(g)^{{\perp}_g}$, as  defined in Definition~\ref{mabuchi} (similarly to \cite[Prop. 6]{acgt}). The formula \eqref{scalh2}  reads as
\begin{equation}\label{scalh3}
Scal_{z+\bb, \p}(g)= (z+ \bb)^2\Big(\sum_{a} \frac{2d_a s_a x_a}{1+ x_az} \Big)- \frac{(z+\bb)^{\p+1}}{p_c(z)}\Big(\frac{F(z)}{(z+\bb)^{\p-1}}\Big)^{''}.
\end{equation}
For $A_1, A_2$ given by \eqref{A1andA2}, integration by parts of \eqref{scalh3} shows that $Scal_{z+\bb, \p}(g)- A_1z - A_2$ is $L^2$-orthogonal to $1$ and $z$ with respect to the measure $p_c(z)(z+ \bb)^{-(\p+1)} dz$. Geometrically, this means that $Scal_{z+\bb, \p}(g)- A_1z - A_2$ is $\langle \cdot, \cdot \rangle_{\omega, K, \bb,  \p}$-orthogonal to the Killing potentials $z$ and $1$, where $\langle \cdot, \cdot \rangle_{\omega, K, \bb,  \p}$ stands for the inner product \eqref{inner-product-0} corresponding to the Killing potential $f_{\omega,K, b}= z+ \bb$. By \cite[Prop.~2]{acgt}, any other Killing potential for a vector field in ${\rm Lie}(\T)$   has the form $\sum_{a}(1+ x_az)f_a$ where $f_a$ is a Killing potential  for the base factor $S_a$, which we can assume without loss is of zero mean with respect to $(\hat S, g_{\hat S})$. Formulae \eqref{scalh3} then shows that  $Scal_{z+\bb, \p}(g)- A_1z - A_2$ will be $\langle \cdot, \cdot \rangle_{\omega, K, \bb,  \p}$-orthogonal to such Killing potentials.  In particular,  $A_1K$ is the $(\Omega, K, \bb, \p)$-extremal vector field  of $(M, J, \Omega, K, \bb),$ computed with respect to a maximal torus $\T\subset {\rm Aut}^{r}(M,J)$ (see \cite{am} and \cite[Def.~7]{lahdili}).

Using that $F_{\Omega,\bb,\p}(z)$ is a solution of  \eqref{emNew}, we have
\begin{lemma}\label{reduced-scalar} Let $(M, J, g, \omega)$  be an admissible K\"ahler manifold over a  CSCK base and $\T$ a maximal torus in the isometry group of $(g, \omega)$. If $g$ is parameterized
by the function $F(z)$ given in \eqref{param}, then the reduced $(z+\bb, \p)$-scalar curvature $Scal_{z+\bb, \p}(g)^{{\perp}_g}$  is given by
\begin{equation}\label{scalh4}
\begin{split}
Scal_{z+\bb, \p}(g)^{{\perp}_g} &= -A_1z - A_2 +(z+ \bb)^2\Big(\sum_{a} \frac{2d_a s_a x_a}{1+ x_az}\Big) - \frac{(z+\bb)^{\p+1}}{p_c(z)}\Big(\frac{F(z)}{(z+\bb)^{\p-1}}\Big)^{''} \\
                                          &=\frac{(z+\bb)^{\p+1}}{p_c(z)}\Big(\frac{F_{\Omega,\bb,\p}(z)}{(z+\bb)^{\p-1}} - \frac{F(z)}{(z+\bb)^{\p-1}}\Big)'',
                                          \end{split}
\end{equation}
where $F_{\Omega,\bb,\p}(z)$ is the smooth function defined in Proposition~\ref{emsolution} in terms of $(\Omega, \bb, \p)$. Furthermore, the $(K, \bb, \p)$-extremal vector field  is $A_1K$.
\end{lemma}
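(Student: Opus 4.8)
The plan is to identify the orthogonal projection of $Scal_{z+\bb,\p}(g)$ onto the Killing potentials, since by Definition~\ref{d:mabuchi} the reduced scalar curvature is $Scal_{z+\bb,\p}(g)^{\perp_g}=Scal_{z+\bb,\p}(g)-\Pi^{g,K,\bb,\p,\T}(Scal_{z+\bb,\p}(g))$, the projection being taken with respect to the inner product \eqref{inner-product-0} with $f=z+\bb$. I claim this projection is exactly $A_1z+A_2$. Granting the claim, the first displayed formula follows immediately by inserting \eqref{scalh3} into $Scal_{z+\bb,\p}(g)-(A_1z+A_2)$; the second follows by substituting the ODE \eqref{emNew} satisfied by $F_{\Omega,\bb,\p}$; and, since $z+\bb$ is the Killing potential of $K$, the definition of the extremal vector field gives $Z=J\grad_g(A_1z+A_2)=A_1K$.

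First I would record the algebraic identity underlying both right-hand sides. Set $H(z):=\big(F_{\Omega,\bb,\p}(z)-F(z)\big)/(z+\bb)^{\p-1}$. By Proposition~\ref{emsolution}, $G_{x,\bb,\p}=F_{\Omega,\bb,\p}/(z+\bb)^{\p-1}$ solves \eqref{emNew}, i.e. $G_{x,\bb,\p}''=Q$ with $Q$ as in \eqref{fbp}. Subtracting $A_1z+A_2$ from \eqref{scalh3} and multiplying by $p_c(z)(z+\bb)^{-(\p+1)}$, the right-hand side becomes $Q(z)-\big(F/(z+\bb)^{\p-1}\big)''=G_{x,\bb,\p}''-\big(F/(z+\bb)^{\p-1}\big)''=H''(z)$; multiplying back by $(z+\bb)^{\p+1}/p_c(z)$ yields
\[
Scal_{z+\bb,\p}(g)-A_1z-A_2=\frac{(z+\bb)^{\p+1}}{p_c(z)}\,H''(z),
\]
which is exactly the second expression. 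This step is pure bookkeeping: it works precisely because $A_1,A_2$ are the solution of \eqref{A1andA2}, so that the inhomogeneous term $Q$ of the ODE reproduces the non-derivative part of \eqref{scalh3}.

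The substantive point is that $A_1z+A_2$ really is the orthogonal projection. By \cite[Prop.~2]{acgt}, $P_g^{\T}(M,\R)$ is spanned by the constants, by $z$, and by the functions $\sum_a(1+x_az)f_a$ where $f_a$ is a mean-zero Killing potential of the base factor $S_a$; as $A_1z+A_2\in P_g^{\T}(M,\R)$, it suffices to check that $Scal_{z+\bb,\p}(g)-A_1z-A_2$ is $\langle\cdot,\cdot\rangle_{\omega,K,\bb,\p}$-orthogonal to each generator. For the generators $1$ and $z$, the standard reduction of the integral over $M$ against $(z+\bb)^{-(\p+1)}\omega^m/m!$ to a one-dimensional integral with weight $p_c(z)(z+\bb)^{-(\p+1)}\,dz$ turns the pairings into $\int_{-1}^{1}H''\,dz$ and $\int_{-1}^{1}zH''\,dz$ (the weight cancels against the identity of the previous paragraph); both vanish after integration by parts, because $F_{\Omega,\bb,\p}$ and $F$ both satisfy \eqref{positivityF}(ii)--(iii), hence \eqref{endG}, which forces $H(\pm1)=H'(\pm1)=0$. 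Orthogonality to $\sum_a(1+x_az)f_a$ is immediate once one notes that $Scal_{z+\bb,\p}(g)-A_1z-A_2$ is a function of $z$ alone, so the fibre-and-base integral factors through $\int_{S_a}f_a\,\omega_a^{d_a}/d_a!=0$.

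The main obstacle is concentrated in this orthogonality verification, and within it in two points: (i) the term-by-term identification of the previous paragraph, which hinges on $A_1,A_2$ being exactly the solution of \eqref{A1andA2} so that $Q$ matches \eqref{scalh3}; and (ii) the boundary bookkeeping $H(\pm1)=H'(\pm1)=0$, which uses that both the candidate $F_{\Omega,\bb,\p}$ and the given $F$ obey the same endpoint conditions \eqref{endG}. Everything else --- the reduction of the $M$-integrals to weighted one-dimensional integrals and the vanishing of the base integrals --- is standard in the admissible setting and may be quoted. Once the projection is identified as $A_1z+A_2$, the two displayed formulas for $Scal_{z+\bb,\p}(g)^{\perp_g}$ and the value $A_1K$ of the $(K,\bb,\p)$-extremal vector field follow with no further computation.
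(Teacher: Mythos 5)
Your proof is correct and takes essentially the same route as the paper: the paper likewise identifies the projection of $Scal_{z+\bb,\p}(g)$ as $A_1z+A_2$ by checking $\langle\cdot,\cdot\rangle_{\omega,K,\bb,\p}$-orthogonality of $Scal_{z+\bb,\p}(g)-A_1z-A_2$ to $1$, $z$, and the potentials $\sum_a(1+x_az)f_a$ (integration by parts against the measure $p_c(z)(z+\bb)^{-(\p+1)}dz$ for the first two, the zero-mean property of the $f_a$ for the rest), and then substitutes the ODE \eqref{emNew} for $F_{\Omega,\bb,\p}$ to obtain the second displayed expression and the value $A_1K$ of the extremal field. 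Your repackaging through $H=(F_{\Omega,\bb,\p}-F)/(z+\bb)^{\p-1}$ with $H(\pm1)=H'(\pm1)=0$ is only a cosmetic variant of the same computation, since those endpoint conditions encode exactly the defining system \eqref{A1andA2} for $A_1,A_2$.
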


A direct corollary is the following
\begin{prop}\label{futaki} Let $(M, J, g, \omega)$  be an admissible K\"ahler manifold over a  CSCK base,  and $\T$ a maximal torus in the isometry group of $(g, \omega)$.  Then,  the corresponding  $(K, \bb, \p)$-Futaki invariant $\mathfrak{F}_{([\omega], K, \bb, \p)}$ restricted to ${\rm Lie}(\T)$ vanishes  iff $A_1=0$. The latter condition is equivalent to $\mathfrak{F}_{[\omega],K, \bb, \p}(K)=0$.
\end{prop}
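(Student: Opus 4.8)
The plan is to deduce the statement directly from Lemma~\ref{reduced-scalar}, which has already done all the hard analytic work. By Lemma~\ref{reduced-scalar}, the $(K, \bb, \p)$-extremal vector field of $(M, J, \Omega, \T)$ is exactly $A_1 K$. By the very definition of the Futaki invariant (Definition~\ref{d:mabuchi}), the $(K, \bb, \p)$-Futaki invariant $\mathfrak{F}_{(\Omega, K, \bb, \p)}$ vanishes on ${\rm Lie}(\T)$ if and only if this extremal vector field vanishes, i.e. if and only if $Z = J\,\mathrm{grad}_g\big(\Pi^{g, K, \bb, \p, \T}(Scal_{z+\bb, \p}(g))\big) = 0$. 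Since $Z = A_1 K$ and $K \neq 0$ is the generator of the (non-trivial) $\Sph^1$-action, this happens precisely when $A_1 = 0$. This establishes the first equivalence.

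Next I would address the final sentence, that $A_1 = 0$ is equivalent to $\mathfrak{F}_{[\omega], K, \bb, \p}(K) = 0$, i.e. that the vanishing of the full Futaki invariant on ${\rm Lie}(\T)$ is detected already by its value on the single vector field $K$. The key point is that the Futaki invariant, evaluated on a Killing-potential-generating field $W$, is (up to the sign conventions of \cite{am,lahdili}) the pairing $\langle Scal_{z+\bb, \p}(g)^{{\perp}_g}, h_W \rangle_{\omega, K, \bb, \p}$ of the reduced scalar curvature against the Killing potential $h_W$ of $W$. The computation preceding Lemma~\ref{reduced-scalar} already shows that $Scal_{z+\bb, \p}(g) - A_1 z - A_2$ is $\langle \cdot, \cdot \rangle_{\omega, K, \bb, \p}$-orthogonal to \emph{all} Killing potentials of the form $\sum_a (1 + x_a z) f_a$ (with $f_a$ of zero mean on the base factors) and also to the constant $1$. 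Hence the only potential against which $Scal_{z+\bb, \p}(g)^{{\perp}_g} = -A_1 z - A_2 + (\cdots)$ can pair nontrivially is the one proportional to $z$, namely the Killing potential $z + \bb$ of $K$ itself. Thus $\mathfrak{F}_{[\omega], K, \bb, \p}$ restricted to ${\rm Lie}(\T)$ is entirely governed by its value on $K$, and that value is a nonzero multiple of $A_1$.

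Concretely, I would compute $\mathfrak{F}_{[\omega], K, \bb, \p}(K)$ by pairing the second expression for $Scal_{z+\bb, \p}(g)^{{\perp}_g}$ in \eqref{scalh4} against $z + \bb$ with respect to the measure $p_c(z)(z+\bb)^{-(\p+1)}\,dz$, integrate by parts twice using the boundary conditions \eqref{positivityF}(ii)--(iii) (equivalently \eqref{endG}) satisfied by both $F_{\Omega, \bb, \p}$ and $F$, and verify that the result reduces to a strictly positive multiple of $A_1$ (the positivity coming from the same non-degeneracy $\alpha_{1,q}^2 < \alpha_{0,q}\alpha_{2,q}$ that guaranteed a unique solution of \eqref{A1andA2}). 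The main obstacle, such as it is, is purely bookkeeping: one must confirm that the orthogonality established above indeed implies that $K$ is the unique direction in ${\rm Lie}(\T)$ detecting the Futaki invariant, so that its vanishing on $K$ forces vanishing on all of ${\rm Lie}(\T)$ — and that the boundary terms from the integration by parts cancel cleanly. No deep difficulty is expected, since Lemma~\ref{reduced-scalar} already isolates $A_1 K$ as the extremal field.
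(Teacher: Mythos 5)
Your proposal is correct and takes essentially the same route as the paper, which presents Proposition~\ref{futaki} as a direct corollary of Lemma~\ref{reduced-scalar} together with the preceding orthogonality computation showing $Scal_{z+\bb,\p}(g)-A_1z-A_2$ is $\langle\cdot,\cdot\rangle_{\omega,K,\bb,\p}$-orthogonal to all Killing potentials, so that $A_1K$ is the $(K,\bb,\p)$-extremal vector field and Definition~\ref{d:mabuchi} gives the equivalence. Your explicit verification that $\mathfrak{F}_{[\omega],K,\bb,\p}(K)$ is a strictly positive multiple of $A_1$, with positivity from $\alpha_{1,q}^2<\alpha_{0,q}\alpha_{2,q}$, is precisely the bookkeeping the paper leaves implicit.
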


We now give an explicit form for the relative $(K,\bb, \p)$-Mabuchi energy in the admissible case, following the similar construction in \cite[Prop. 7]{acgt}.  To this end, we use the parametrization of $\mathcal{K}^{\rm adm}(M, \omega)$ in terms of fibre-wise symplectic potentials $u(z)$ (defined by $u''(z)=\frac{\Mpc(z)}{F(z)}$, see Sect.~\ref{s:admissible-metrics}) and \eqref{symplectic/complex}.
\begin{prop}\label{mabuchi} Let $(M, J, g, \omega)$  be an admissible K\"ahler manifold over a  CSCK base,  and $\T$ a maximal torus in the isometry group of $(g, \omega)$. Then,  the relative $(K,\bb, \p)$-Mabuchi energy associated to $\Omega=[\omega]$ and $\T$,  restricted to the space of admissible K\"ahler metrics $\mathcal{K}^{\rm adm}(M, \omega)$ is given (up to an additive constant) by a positive multiple of the functional
\begin{equation*}
\begin{split}
\mathcal{M}_{g_c} : u(z) \longmapsto & \int_{-1}^1 \frac{F_{\Omega,\bb,\p}(z)}{(z+\bb)^{p-1}}\big(u''(z)-u''_c(z)\big)dz \\
                                                             &- \int_{-1}^{1}\frac{p_c(z)}{(z+\bb)^{p-1}}\log\Big(\frac{u''(z)}{u''_c(z)}\Big)dz,
                                                      \end{split}
                                                      \end{equation*}
                                                      where $F_{\Omega,\bb,\p}(z)$ is the smooth function defined in Proposition~\ref{emsolution}, and $u_c(z)$ is the fibre-wise symplectic potential for some fixed $\omega$-compatible admissible K\"ahler metric $g_c\in \mathcal{K}^{\rm adm}(M, \omega)$.
                                                      \end{prop}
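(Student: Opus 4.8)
The plan is to compute the first variation of the functional $\mathcal{M}_{g_c}$ along an arbitrary path of admissible K\"ahler metrics and to match it, up to a fixed positive constant, with the first variation of the relative $(K,\bb,\p)$-Mabuchi energy prescribed in Definition~\ref{d:mabuchi}; since $\mathcal{K}^{\rm adm}(M,\omega)$ is connected (indeed convex in the parameter $\Theta$), two functionals with proportional differentials must differ by an additive constant, which is exactly the assertion. This mirrors the computation in \cite[Prop.~7]{acgt}. Throughout I would use that every $\Sph^1$-invariant function $\psi(z)$ integrates over $M$ as $\int_M \psi\, \omega^m/m! = c_0 \int_{-1}^1 \psi(z) p_c(z)\, dz$ for a fixed constant $c_0>0$ depending only on the volumes of the base factors, reducing all global integrals to one-dimensional ones.

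First I would differentiate $\mathcal{M}_{g_c}$. Writing $u_t = u + t\dot u$ and using $u''(z) = p_c(z)/F(z)$ (so $1/u''=F/p_c$), the two terms differentiate to $\int_{-1}^1 \tfrac{F_{\Omega,\bb,\p}}{(z+\bb)^{\p-1}}\dot u''\,dz$ and $-\int_{-1}^1 \tfrac{F}{(z+\bb)^{\p-1}}\dot u''\,dz$, so that
\begin{equation*}
\frac{d}{dt}\Big|_{t=0}\mathcal{M}_{g_c}(u_t) = \int_{-1}^1 H(z)\,\dot u''(z)\,dz, \qquad H(z):=\frac{F_{\Omega,\bb,\p}(z)-F(z)}{(z+\bb)^{\p-1}}.
\end{equation*}
Next I would integrate by parts twice to transfer the derivatives onto $H$. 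The decisive point is that both $F_{\Omega,\bb,\p}$ and $F$ obey the boundary conditions \eqref{positivityF}(ii)--(iii), equivalently $G_{\Omega,\bb,\p}$ and $F/(z+\bb)^{\p-1}$ satisfy \eqref{endG}, whence $H(\pm1)=0$ and $H'(\pm1)=0$; since admissible variations keep these endpoint data fixed, $\dot u''$ stays bounded up to $\pm1$ while $\dot u,\dot u'$ remain bounded as well. Hence every boundary contribution vanishes and
\begin{equation*}
\frac{d}{dt}\Big|_{t=0}\mathcal{M}_{g_c}(u_t) = \int_{-1}^1 H''(z)\,\dot u(z)\,dz.
\end{equation*}

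Finally I would invoke Lemma~\ref{reduced-scalar}: formula \eqref{scalh4} reads precisely $H''(z) = \tfrac{p_c(z)}{(z+\bb)^{\p+1}}\,Scal_{z+\bb,\p}(g)^{\perp_g}$, so that $\tfrac{d}{dt}|_{0}\mathcal{M}_{g_c} = \int_{-1}^1 \tfrac{p_c(z)}{(z+\bb)^{\p+1}}\,Scal_{z+\bb,\p}(g)^{\perp_g}\,\dot u\,dz$. On the other hand, Definition~\ref{d:mabuchi} with $f=z+\bb$, the reduction to $[-1,1]$, and the Legendre correspondence \eqref{symplectic/complex} (under which $\dot\varphi=-\dot u$ as functions of $z$) give $\big(\mathbf{d}\mathcal{M}^{\T}_{(\Omega,K,\bb,\p)}\big)_\omega(\dot\varphi) = c_0\int_{-1}^1 \tfrac{p_c(z)}{(z+\bb)^{\p+1}}\,Scal_{z+\bb,\p}(g)^{\perp_g}\,\dot u\,dz$. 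Comparing the two shows the differentials agree up to the factor $c_0>0$, and connectivity of $\mathcal{K}^{\rm adm}(M,\omega)$ finishes the proof.

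I expect the genuine obstacle to be the boundary analysis in the double integration by parts: one must check carefully that $\dot u$ and $\dot u'$ stay bounded while $u''$ itself blows up like $(1\mp z)^{-1}$ at the endpoints, so that the endpoint terms truly vanish, and one must track the sign in \eqref{symplectic/complex} to guarantee that the proportionality constant $c_0$ is genuinely positive, as claimed. A secondary point to record is that, because $Scal_{z+\bb,\p}(g)^{\perp_g}$ is $\langle\cdot,\cdot\rangle_{\omega,K,\bb,\p}$-orthogonal to the Killing potentials $1$ and $z$, the pairing is insensitive to the affine ambiguity in $\dot u$, so that $\mathcal{M}_{g_c}$ is well-defined as a functional of $u''$ alone.
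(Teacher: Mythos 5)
Your proposal is correct and is essentially the paper's own argument: the paper proves Proposition~\ref{mabuchi} by declaring it ``identical to \cite[Prop.~7]{acgt}'' using Lemma~\ref{reduced-scalar}, and that cited proof is precisely your computation --- vary $\mathcal{M}_{g_c}$, use $u''=p_c/F$, integrate by parts twice with the boundary terms killed because both $F$ and $F_{\Omega,\bb,\p}$ satisfy \eqref{positivityF}(ii)--(iii) (so $H(\pm1)=H'(\pm1)=0$), identify $H''$ with the reduced $(z+\bb,\p)$-scalar curvature via \eqref{scalh4}, and match signs through \eqref{symplectic/complex}. You have in fact supplied the details (boundedness of $\dot u,\dot u'$, positivity of the volume constant, insensitivity to the affine ambiguity in $u$) that the paper leaves implicit in the citation.
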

\begin{proof} Using Lemma~\ref{reduced-scalar}, the proof is identical to the one of  \cite[Prop.~7]{acgt}. \end{proof}
The proof of \cite[Cor.~3]{acgt} yields
\begin{prop}\label{Lahdili} {\it Let $(M, J, g, \omega)$  be an admissible K\"ahler manifold over a  CSCK base,  and $\T$ a maximal torus in the isometry group of $(g, \omega)$.  If the function $F_{[\omega],\bb,\p}(z)$ is strictly negative somewhere on $(-1,1)$, then the {relative $(K,\bb, \p)$-Mabuchi energy}  of $(M, J, [\omega], \T)$ is unbounded from below. }
\end{prop}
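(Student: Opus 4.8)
The plan is to reduce everything to the explicit formula of Proposition~\ref{mabuchi} and then exhibit a divergent sequence of admissible metrics. Since the admissible K\"ahler metrics form a subfamily of $\cK^{\T}(M,J,\Omega)$, and Proposition~\ref{mabuchi} identifies the restriction of the relative $(K,\bb,\p)$-Mabuchi energy to this subfamily, up to an additive constant, with a positive multiple of the functional $\mathcal{M}_{g_c}$, and since neither a positive factor nor an additive constant affects unboundedness from below, it is enough to produce admissible potentials $u_k$ with $\mathcal{M}_{g_c}(u_k)\to-\infty$. The essential structural input is the parametrization recalled before Proposition~\ref{emsolution}: $\cK^{\rm adm}(M,\omega)$ consists of \emph{all} smooth $\Theta$ satisfying the pointwise conditions \eqref{positivity}, equivalently all fibre-wise potentials $u$ with $u''=1/\Theta=\Mpc/F>0$, with no integral or moment constraint linking the values of $u''$ at different points. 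We are therefore free to prescribe $u''$ locally.

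Abbreviate $W(z):=F_{\Omega,\bb,\p}(z)/(z+\bb)^{\p-1}$ and $\rho(z):=\Mpc(z)/(z+\bb)^{\p-1}$; since $\bb>1$ and $\Mpc>0$ on $[-1,1]$ we have $\rho>0$, while $W$ has the same sign as $F_{\Omega,\bb,\p}$. With $\phi:=u''$ and $\phi_c:=u_c''$ the functional reads $\mathcal{M}_{g_c}(u)=\int_{-1}^1 W(\phi-\phi_c)\,dz-\int_{-1}^1\rho\,\log(\phi/\phi_c)\,dz$, so that $\mathcal{M}_{g_c}(u_c)=0$. The mechanism behind the statement is now visible: on a region where $W<0$, letting $\phi\to+\infty$ sends the linear term to $-\infty$ linearly and the logarithmic term to $-\infty$ as well, so concentrating a large $u''$ on such a region drives $\mathcal{M}_{g_c}$ to $-\infty$. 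By contrast, where $W>0$ the linear term would swamp the logarithm with a $+\infty$, which is exactly why only the negative case destroys boundedness.

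Concretely I would use the hypothesis to fix $z_0\in(-1,1)$ with $F_{\Omega,\bb,\p}(z_0)<0$, and by continuity choose nested closed intervals $I\subset I'\subset(-1,1)$ about $z_0$ and a constant $c>0$ with $W\le-c$ on $I'$. Starting from a reference metric $g_c$ with profile $\Theta_c$ and potential $u_c$, let $\Theta_k$ be smooth, equal to $\Theta_c$ off $I'$, equal to $\Theta_c/k$ on $I$, and satisfy $0<\Theta_k\le\Theta_c$ on $I'\setminus I$. Because $\Theta_k\equiv\Theta_c$ near $z=\pm1$, the boundary conditions \eqref{positivity} (ii)--(iii) hold, so each $\Theta_k$ defines a genuine admissible metric in $\Omega$. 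The associated $\phi_k=1/\Theta_k$ satisfies $\phi_k\ge\phi_c$ on $I'$ and $\phi_k=\phi_c$ elsewhere, so both integrals localize to $I'$: there the logarithmic term $-\int_{I'}\rho\,\log(\phi_k/\phi_c)$ is $\le0$, while the linear term obeys $\int_{I'}W(\phi_k-\phi_c)\le-c\int_{I'}(\phi_k-\phi_c)\le-c(k-1)\int_I\phi_c$. Hence $\mathcal{M}_{g_c}(u_k)\le-c(k-1)\int_I\phi_c\to-\infty$.

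The computation is routine; the single point demanding care is the construction in the third paragraph, namely that one may deform $\Theta$ freely in the interior of $(-1,1)$ while freezing it near the endpoints and still land in $\cK^{\rm adm}(M,\omega)$ for the same class $\Omega$. This is precisely guaranteed by the unconstrained parametrization of $\cK^{\rm adm}(M,\omega)$ quoted above, and it is the feature that makes the admissible setting so rigid: once $F_{\Omega,\bb,\p}$ dips below $0$ on $(-1,1)$, no global constraint can rescue the Mabuchi energy. This is, in essence, the argument of \cite[Cor.~3]{acgt} transcribed to the weighted functional of Proposition~\ref{mabuchi}.
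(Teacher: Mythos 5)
Your proof is correct and follows essentially the same route as the paper, which simply invokes the argument of \cite[Cor.~3]{acgt}: that argument likewise uses the formula of Proposition~\ref{mabuchi} and a ray of admissible potentials whose second derivative is blown up on an interval where $F_{\Omega,\bb,\p}<0$, so that the linear term dominates the logarithmic one. Your verification that the deformed profiles $\Theta_k$ stay in $\mathcal{K}^{\rm adm}(M,\omega)$ (conditions \eqref{positivity}(ii)--(iii) being untouched near $z=\pm1$) is exactly the point that makes the transcription to the weighted functional go through.
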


\subsection{The $(\hat \beta_{\bb}, \p)$-Donaldson--Futaki invariant}\label{beta-futaki} In \cite[Sect.~5.1]{am}, a quantized version of the $(f, 2m)$-Donaldson--Futaki invariant was proposed, which leads to a notion of $(\beta, 2m)$-K-stability, where $\hat \beta$ is a fixed $\C^*$-subgroup in the automorphism group ${\rm Aut}(M, L)$ of a smooth  compact polarized variety $(M, L)$.  This was further developed and generalized in \cite{lahdili2} for arbitrary weights $\p$  and for quasi-periodic vector fields $\hat K \in {\rm Lie}({\rm Aut}(M, L))$.  We first briefly recall the  general setting of \cite{am,lahdili2}.

\smallskip
Let $(M, L)$ be a smooth compact polarized projective variety and $\Omega = 2\pi c_1(L)$ the corresponding K\"ahler class. Let $\hat \beta$ be a $\C^*$-subgroup of ${\rm Aut}(M,L)$,   which covers a $\C^*$-subgroup  $\beta$  of the reduced automorphisms group ${\rm Aut}^{r}(M, J) \cong {\rm Aut}(M,L)/ \{{\mathbb C}^{*}\cdot{\rm Id}_{L}\}$. We denote by $\hat K_{\beta}$ (resp. $K_{\beta}$) the generator
of the corresponding $\Sph^1$-action on $L$ (resp. on $M$),  and by $B_k$ the infinitesimal generators  for  the induced linear  $\C^*$-actions  on $H^0(M, L^k), k\ge 1$. We use the following normalization for $B_k$: for any holomorphic section $s\in H^0(M, L^k)$ and $x \in M$,
\begin{equation}\label{normalization}
(B_k \cdot s) (x) : = i \frac{d}{dt}_{|t=0} \Big( \varphi_t^{\hat K_{\beta}}\big(s(\varphi_{-t}^{K_{\beta}}(x))\big)\Big),
\end{equation}
where  $\varphi_t^{\hat K_{\beta}}$ and $\varphi_t^{K_{\beta}}$ are the flows of $\hat K_{\beta}$  and $K_{\beta}$, respectively. Notice that $B_k$ is hermitian (and therefore semi-simple with real eignvalues) with respect to the $L^2$ inner product defined by any $\hat K_{\beta}$-invariant hermitian metric on $L$.  We further assume that the $\C^*$-action $\hat \beta$ on $L$ is such that $B_k$  have positive eigenvalues for any $k$ large enough: this is equivalent with the property  that for any $\hat K_{\beta}$-invariant hermitian product $h$ on $L$, the induced momentum map  $f_{\beta}$  for $K_{\beta}$ with respect to  the curvature form $\omega_h \in \Omega$  is positive. Indeed, if $\nabla^{h,k}$ is the corresponding Chern connection on $L^k$,  it is  well-known (see e.g.~\cite{gauduchon-book}) that
\begin{equation}\label{generator}
B_k =-i \nabla^{h,k}_{K_{\beta} }+  k f_{\beta}.
\end{equation}
It then follows that each eigenvalue $\lambda$ of $B_k/k$ equals $f_{\beta}(x_0)$  for a point $x_0$ of maxima of $|s|_{h,k}^2$  of an eigensection $s$ corresponding to $\lambda$.

We now consider a  {\it normal} $\hat \beta$-compatible test configuration (of exponent $r$) $({\mathcal X}, \cL)$ associated to $(M, L)$. By this, we mean that $({\mathcal X}, \cL)$ is a normal polarized variety of complex dimension $(m+1)$,  endowed with a $\C^*$-equivariant map $\pi : {\mathcal X} \to \C,$ such that
\begin{enumerate}
\item[$\bullet$] $(M_t= \pi^{-1}(t), L_t=\cL_{|_{M_t}}) \cong (M, L^r)$ for $t\neq 0$, and
\item[$\bullet$] $\pi$ is flat (and therefore the  dimensions of $H^0(M_t, L_t^k)$ stay unchanged for $t\in \C$ and $k$ large enough),
\item[$\bullet$] $\pi$ is $\hat \beta$-invariant.
\end{enumerate}
It follows that the central fibre $(M_0, L_0)$ of such a test configuration is an (in general singular) polarized projective variety,  endowed with two commuting $\C^*$-actions, $\hat \alpha$ and $\hat \beta$. We denote respectively by $A_k, B_k$ the corresponding infinitesimal generators for the induced linear $\C^*$-actions on $H^0(M_0, L_0^k)$. For each $q \in \R$, we expect to have expansions
\begin{equation}\label{asymptotics}
\begin{split}
k^{-m +1-q} {\rm Tr}( B_k^q) &= k b_0^{q,0}(\hat \beta)  + b_1^{q,0}(\hat \beta) + O(k^{-1}), \\
k^{-m-q}{\rm Tr} (A_k B_k^q) &= k b_0^{q, 1}(\hat \beta, \hat \alpha) + b_1^{q, 1}(\hat \beta, \hat \alpha) + O(k^{-1}).
\end{split}
\end{equation}
To the best of our knowledge, such  Hilbert expansions hold for $q \in \N$ (see \cite{donaldson06}) and, for any $q$,   if we assume that $M_0$ is smooth (this follows from the considerations in \cite{lahdili2,lahdili3}) or that $M_0$ is a toric variety \cite{am}. We shall exhibit below another situation where the expansions \eqref{asymptotics} hold.

\smallskip
Assuming that \eqref{asymptotics} do hold on $(M_0, L_0)$, we define the $(\hat \beta, \p)$-Donaldson--Futaki invariant  of the test configuration $({\mathcal X}, \cL)$ as follows.

\begin{Definition}\label{K-stability} The $(\hat \beta, \p)$-Donaldson--Futaki invariant of a normal $\hat \beta$-compatible test configuration $({\mathcal X}, \cL)$  associated to $(M, L)$ such that \eqref{asymptotics} holds true on the central fibre $(M_0, L_0)$  is defined to be
\begin{equation}\label{Donaldson-Futaki}
{\rm DF}_{(\hat \beta, \p)}({\mathcal X}, \cL) := \frac{b_1^{(1-\p), 1}(\hat\beta, \hat \alpha)b_0^{-(\p+1),0}(\hat\beta)-b_0^{-(\p+1), 1}(\hat\beta, \hat \alpha) b_1^{(1-\p),0}(\hat \beta)}{b_0^{-(\p+1),0}(\hat\beta)}.
\end{equation}
The polarized variety $(M, L)$ is called {\it $(\hat\beta, \p)$-K-semistable} if for any normal, $\hat\beta$-compatible test configuration $({\mathcal X}, \cL)$ for $(M,L)$ as above,
\begin{equation}\label{semistable}
{\rm DF}_{(\hat\beta, \p)}(\mathcal X, \cL) \ge 0.
\end{equation}
$(M, L)$ is {\it $(\hat \beta, \p)$-K-stable} if, furthermore, equality in \eqref{semistable} holds if and only if ${\mathcal X}= M \times \C, \cL= L\otimes \cO_{\C}$ is a product test configuration (see Remark~\ref{quasi-periodic} part (2) below).
\end{Definition}
\begin{remark}\label{quasi-periodic} (1) The definition of $(\hat\beta, \p)$-K-stability makes sense for any positive multiple $\lambda$ of $\hat \beta$, i.e. taking $\lambda B_k$ instead of $B_k$. This will introduce an overall positive factor in the computation of ${\rm DF}_{(\lambda \hat \beta, \p)}$. More generally, one can define $(\hat K, \p)$-K-stability with respect to a quasi-periodic vector field $\hat K \in {\rm Lie}({\rm Aut}(M, L))$, by  considering the linear operators $B_k$ acting on $H^0(M_0,  L_0)$ via \eqref{generator} for a $\hat K$-invariant hermitian product $h$ on $L$, see \cite{lahdili2}.

(2) In the special case of the product test configuration  ${\mathcal X} = M \times \C, \cL = L \otimes \cO_{\C},$ the expression on the rhs of \eqref{Donaldson-Futaki} can be regarded as defining a numerical invariant  ${\rm DF}_{(\hat\beta, \p)}(\hat \alpha)$  associated to any $\C^*$-subgroup  $\hat \alpha \subset {\rm Aut}(M,L)$.  It is  possible to see (this follows essentially from \cite{lahdili2}) that the expansions \eqref{asymptotics} hold and  ${\rm DF}_{(\hat\beta, \p)}(\hat \alpha)$ coincides, up to a positive multiplicative constant, with the differential-geometric $(f_{\hat \beta}, \p)$-Futaki invariant $\mathfrak{F}_{\Omega, f_{\hat \beta}, \p}$ evaluated on the $\Sph^1$-generator of $\alpha$. (Recall that $f_{\hat \beta}$ is the Killing potential of $\beta$ determined from $\hat \beta$, see \eqref{generator}.) In particular, ${\rm DF}_{(\hat\beta, \p)}(\hat \alpha)$ does not depend on the lift $\hat \alpha$ of the   $\C^*$-action $\alpha \subset {\rm Aut}^r(M,J)$. At times we will even replace $\alpha$ in this notation by the corresponding $\Sph^1$-generator.
\end{remark}

Following \cite{sz}, one can also define a {\it relative} version of the $(\hat \beta,\p)$-Donaldson--Futaki invariant. To this end, we suppose that $(M, L)$ is a compact smooth polarized variety, $\T \subset {\rm Aut}^{r}(M,J)$ a fixed maximal torus,  $\beta \subset \T^c$  with a lift $\hat \beta \subset {\rm Aut}(M, L)$ as before. Here $\T^c$ denotes the complexification of $\T$. For any two  $\C^*$-actions $\gamma', \gamma'' \subset \T^c$, we suppose that there exist an expansion
\begin{equation}\label{general-expansion}
k^{-m-q-2}{\rm Tr} (C'_k C''_k B_k^q) = c_0^{q, 0}(\hat \beta, \hat \gamma', \hat \gamma'') + O(k^{-1}),
\end{equation}
where $C_k'$ and $C_k''$ are the corresponding generators on the space $H^0(M, L^k)$ for some lifts $\hat \gamma'$ and $\hat \gamma''$ of $\gamma'$ and $\gamma''$ to ${\rm Aut}(M,  L)$. We then define the $(\hat \beta, \p)$-weighted product for the $\C^*$-actions $\gamma', \gamma''$ by
\begin{equation}\label{inner-product}
\langle \gamma', \gamma'' \rangle_{(\hat\beta, \p)} := c_0^{-(1+\p),0}(\hat \beta, \hat \gamma', \hat \gamma'') -b_0^{-(1+\p), 0}(\hat \beta, \hat \gamma')b_0^{-(1+\p),0}(\hat \beta, \hat \gamma'').
\end{equation}
The above definition does not depend on the lifts of $\gamma'$ and $\gamma''$ to ${\rm Aut}(M, L)$: it computes the zero order coefficient of the expansion
$$k^{-m+ \p-1}{\rm Tr} ({\mathring C}'_k {\mathring C}''_k B_k^{-(1+\p)}) = \langle \gamma', \gamma'' \rangle_{(\hat\beta, \p)} + O(k^{-1}),$$
where ${\mathring C}_k= C_k - c_k k {\rm Id}$  for a constant $c_k$, uniquely  determined by  the condition (see  \eqref{asymptotics}) $$k^{-m+ \p}{\rm Tr} ({\mathring C}_k B_k^{-(1+\p)}) =   O(k^{-1}).$$ Using the fact that the eigenvalues of $B_k/k$ are  in the interval $[(f_{\beta})_{\rm min}, (f_{\beta})_{\rm max}] \subset (0, \infty)$, one sees that \eqref{inner-product} defines (by linearity) an inner product on $\mathfrak{t} = {\rm Lie}(\T)$.

We now consider a normal  test configuration  $({\mathcal X}, \cL)$ associated to a smooth polarized variety $(M,L)$ as before, and also assume that it is compatible with a fixed maximal torus $\T \subset {\rm Aut}^{r}(M,J)$ (with  $\beta \subset \T^c$ as before). Thus,  $\T$ is subtorus of a maximal torus $\T_0 \subset {\rm Aut}(M_0, L_0)/ \{\C^*\cdot {\rm Id}_{L_0}\}$  of the reduced autmorphism group of  the central fibre $(M_0, L_0)$, and we have an embedding $\mathfrak{t} \subset \mathfrak{t}_0$ of the corresponding Lie algebras.  We denote by $\gamma_{\rm ex}\in \mathfrak{t} \subset \mathfrak{t}_0$ the element corresponding to the $K_{(\hat \beta, \p)}$-extremal vector field of $(M, \Omega, \T)$, where $\Omega = 2\pi c_1(L)$. We also assume that the inner product $\langle \cdot, \cdot \rangle_{(\hat\beta, \p)}$ is well-defined on $\mathfrak{t}_0$, i.e.  the expansions \eqref{general-expansion} hold on
$(M_0, L_0)$.  We then define (by using Remark~\ref{quasi-periodic} part (2))
\begin{defn}\label{relative-K-stability} The relative $(\hat \beta, \p)$-Donaldson--Futaki invariant ${\rm DF}^{\gamma_{\rm ex}}_{(\hat\beta, \p)}(\mathcal X, \cL)$ of a test configuration $(\mathcal X, \cL)$ as above is defined by
$${\rm DF}^{\gamma_{\rm ex}}_{(\hat\beta, \p)}(\mathcal X, \cL):= {\rm DF}_{(\hat\beta, \p)}(\mathcal X, \cL) - \left[\frac{\langle \alpha, \gamma_{\rm ex} \rangle_{(\hat\beta, \p)}}{\langle \gamma_{\rm ex}, \gamma_{\rm ex} \rangle_{(\hat \beta, \p)}}\right]{\rm DF}_{(\hat \beta, \p)}(\gamma_{\rm ex}).$$
The polarized variety $(M, L)$ is said to be {\it relative $(\hat\beta, \p)$-K-semistable} if for each $\T$ compatible normal  test configuration $(\mathcal X, \cL)$ as above, ${\rm DF}^{\gamma_{\rm ex}}_{(\hat\beta, \p)}(\mathcal X, \cL) \ge 0$.  It is {\it relative $(\hat \beta, \p)$-K-stable} if, furthermore, equality in the latter inequality holds if and only if $(\mathcal X, \cL)$ is  a product configuration.
\end{defn}

\smallskip
We now explore Definitions~\ref{K-stability}  and \ref{relative-K-stability} in the case when $\Omega$  is an admissible K\"ahler class on $M=P(E_0 \oplus E_{\infty})$,   which is also a rational multiple of a polarization, i.e. $r \Omega = 2\pi c_1(L)$ for a holomorphic line bundle $L$ over $M$ and a positive integer $r$. As the theory is homogeneous in $r$, we shall assume without loss $r=1$ and refer to $L$ as an {\it admissible polarization}.

In \cite{acgt,sz} (following \cite{RT}) a $1$-parameter family of  test configurations associated to an admissible polarized variety $(M, L)$ is constructed as follows: let $\mathcal X$ be the {\it degeneration to the normal cone} of the divisor $e_{\infty} := P(0\oplus E_{\infty}) \subset M$.  Thus, $\pi : \mathcal X \to \C$ is a (smooth) polarized variety obtained by blowing up $M \times \C$  along $e_{\infty}\times \{0\}$. Notice that $M_0=\pi^{-1}(0)$ consists of two  (smooth) varieties:  the exceptional divisor $P$ in $\mathcal X$  and the  blow-up $\hat M$ of $M$ along  $e_{\infty}$,  intersecting at the exceptional divisor $E= P(\nu_{e_{\infty}})$ of $\hat M$, where $\nu_{e_{\infty}}:= TM_{|e_{\infty}}/Te_{\infty} \to e_{\infty}$ is the normal bundle of $e_{\infty}$.

We denote by $\hat \alpha$ the induced $\C^*$-action on $(M_0,L_0)$.  It is shown in \cite{RT} that the Seshadri constant of $e_{\infty}$ with respect to $L$  is $2$, which means that  there is  a $1$-parameter family of  polarizations $\cL_c=\pi^*(L)\otimes \cO(-cP), \ c\in (0, 2)\cap \Q$ of $\mathcal X$ with  $(M_t, (\cL_c)_{|M_t}) \cong (M, L)$, thus giving rise to the family $(\mathcal X, \cL_c)$ of test-configurations associated to $(M, L)$. Letting $\z:=c-1$ (this is a formal substitution),  we have, for $k$ sufficiently large,   the following $\hat \alpha$-invariant decomposition of $H^0(M_0, L_0^k)$ (see \cite{acgt,sz})
\begin{equation}\label{weight}
\begin{split}
H^0(M_0, L_0^k) = & \bigoplus_{i=0}^{(1-\z)k} H^0(e_{\infty}, L^k_{|e_{\infty}}\otimes S^{2k-i} \nu_{\infty}^*) \\
                       &  \bigoplus_{j=1}^{(1+\z)k} H^0(e_{\infty}, L^k_{|e_{\infty}}\otimes S^{(1+z)k-j} \nu_{\infty}^*) \\
                       =& \bigoplus_{i=0}^{2k}  H^0(e_{\infty}, L^k_{|e_{\infty}}\otimes S^{2k-i} \nu_{\infty}^*).
                       \end{split}
                       \end{equation}
It is shown in \cite{acgt,sz} that  \eqref{weight} gives rise to the eigenspace decomposition of  the generator $A_k$ for the action of $\hat \alpha$ on $H^0(M_0, L_0^k)$  as follows:  each summand of on the first line corresponds to the eigenvalue $0$ whereas (with the normalization \eqref{normalization} for $A_k$)  the summands on the second line correspond to eigenvalues $j$. In the above equalities, $\nu_{\infty}^*$ denotes the dual of the normal bundle of  $e_{\infty}$.

Let us now endow the admissible manifold $M=P(E_0 \oplus E_{\infty})$ with an admissible K\"ahler metric $(g,\omega)$ in the admissible K\"ahler class  $\Omega=2\pi c_1(L)$: we can take for instance  the canonical K\"ahler metric \eqref{g}. We denote by $K$ the vector field on $M$ generating the $\Sph^1$-action $\beta(e^{i\varphi})\cdot [a, b] = [e^{i\varphi}a, b]=[a, -e^{i\varphi}b]$.  Recall that $(g,\omega)$ is $K$-invariant by construction, and $z$ is the momentum map of $K$ with respect to $\omega$ whereas $e_{\infty}= z^{-1}(-1)$. We denote by $h$ the hermitian metric on $L$ whose curvature form is $\omega$ and use its Chern connection to lift $K$ to  a holomorphic vector field $\hat K = K^H - (z+ \bb) T$ on the total space of $L$, where $K^H$ is the horizontal lift and $T$ is the vector field generating the multiplications by $e^{i\varphi}$ on each fibre of $L$. It is well-known (see e.g. \cite{gauduchon-book}) that for suitable values of $\bb$, $\hat K$ generates an $\Sph^1$-subgroup in ${\rm Aut}(M,L)$. Restricting $\hat K$ to $L_{|e_{\infty}}$ (where $z=-1$ and $K^H=0$), we see that these values are $\bb \in \Z$. We denote by $\hat \beta_{\bb}$ the corresponding lift  of the $\Sph^1$-action $\beta$ to $L$, and by $B_{k, \bb}$ the generator for the action on $H^0(M_0, L^k_0)$. Notice that $\hat \beta_{\bb}$ acts fibre-wise with weight $(-\bb +1)$  on $L_{|e_{\infty}}$, and  with weight $1$ on the normal bundle $\nu_{\infty}$ of $e_{\infty}$: The latter follows for instance by computing the eigenvalues of the hessian of $z$ with respect to $g$ (or equivalently of $dJdz= d(1-z^2)\theta$ with respect to $\omega$) along $e_{\infty}=z^{-1}(-1)$,  by using the explicit form of the metric \eqref{g} and \eqref{theta}. Thus, under the normalization \eqref{normalization}, $B_{k,\bb}$  acts  on $H^0(e_{\infty}, L^k_{|e_{\infty}}\otimes S^{uk+v} \nu_{\infty}^*)$  as $(k(u+\bb-1)+v){\rm Id}$.

By Remark~\ref{quasi-periodic} part (1), in relation to questions of stability, we can consider
more generally the quasi-periodic vector fields $\hat K = K^H - (z+\bb)T$ with $\bb \in ] 1 , +\infty)$. With a small abuse of notation, we shall continue to refer to the corresponding $(\hat K, \p)$-Futaki--Donaldson invariant as ${\rm DF}_{(\hat \beta_{\bb}, \p)}$ and to  the $(\hat K, \p)$-K-stability notion as  $(\hat \beta_{\bb}, \p)$-K-stability. We now prove the following.
\begin{prop}\label{futaki-ruled} Let $M = P(E_0 \oplus E_{\infty})\to S$ be an admissible manifold over a CSCK base, and $L$ an admissible polarization of $M$,  which defines, up to a scale, an admissible K\"ahler class $\Omega$ with  $\mathfrak{F}_{\Omega, K, \bb, \p}(K)=0$, see Proposition~\ref{futaki}. Then, the $(\hat \beta_{\bb}, \p)$-Donaldson--Futaki  invariant of the test configuration  $(\mathcal X, \cL_{\z})$ with parameter $\z\in \Q \cap (-1, 1),$ corresponding to the degeneration of the normal cone of the infinity section $e_{\infty}=P(0 \oplus E_{\infty})$,  is given up to a positive scale by $F_{\Omega,\bb,\p}(\z)$.
\end{prop}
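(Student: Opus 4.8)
The plan is to compute the Hilbert-type expansions \eqref{asymptotics} for the central fibre $(M_0,L_0)$ of the admissible test configuration $(\mathcal{X},\cL_{\z})$ directly from the eigenspace decomposition \eqref{weight}, and to substitute the resulting coefficients into \eqref{Donaldson-Futaki}. The decomposition \eqref{weight} reduces every trace to a single sum over the integer $\ell$ (the power of $\nu_{\infty}^*$) of the dimensions $h_k(\ell):=\dim H^0(e_{\infty},L^k_{|e_{\infty}}\otimes S^{\ell}\nu_{\infty}^*)$, weighted by the eigenvalues of $A_k$ and $B_{k,\bb}$. With the weights recorded just before the statement, $B_{k,\bb}$ acts on the $\ell$-th summand by $\ell+k(\bb-1)$ and $A_k$ by $\max(0,(1+\z)k-\ell)$; in particular the eigenvalues of $B_{k,\bb}$ all lie in $[k(\bb-1),k(\bb+1)]$ and are positive for $\bb>1$, so $B_{k,\bb}^q$ is well defined for the real exponents $q=1-\p$ and $q=-(\p+1)$ occurring in \eqref{Donaldson-Futaki}. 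Thus I must evaluate, to two orders in $k$ and for these two values of $q$,
\[ {\rm Tr}(B_{k,\bb}^q) = \sum_{\ell=0}^{2k}\big(\ell+k(\bb-1)\big)^q h_k(\ell), \qquad {\rm Tr}(A_kB_{k,\bb}^q) = \sum_{\ell=0}^{(1+\z)k}\big((1+\z)k-\ell\big)\big(\ell+k(\bb-1)\big)^q h_k(\ell). \]

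The key input is the two-term expansion of $h_k(\ell)$ as $k\to\infty$ with $\ell=(z+1)k$, $z\in[-1,1]$. Since $e_{\infty}=P(E_{\infty})\to S$ is smooth and the higher cohomology of $L^k_{|e_{\infty}}\otimes S^{\ell}\nu_{\infty}^*$ vanishes in the relevant range (as in \cite{acgt,sz}), Riemann--Roch on $e_{\infty}$ shows that $h_k(\ell)$ is a polynomial in $k$ whose leading coefficient is a fixed positive constant times $p_c(z)$ and whose subleading coefficient is, up to the same constant, $p_c(z)$ times the curvature data $\sum_a\frac{d_as_ax_a}{1+x_az}$ (plus a derivative term contributing only at the boundary). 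This is exactly the computation of \cite{acgt,sz} in the unweighted case; the sole new feature is the factor $\big(\ell+k(\bb-1)\big)^q=k^q(z+\bb)^q$ in the traces. Converting the sums to integrals by Euler--Maclaurin and retaining the first two orders, I obtain ${\rm Tr}(B_{k,\bb}^q)=k^{m+q}b_0^{q,0}+k^{m-1+q}b_1^{q,0}+\cdots$ and ${\rm Tr}(A_kB_{k,\bb}^q)=k^{m+1+q}b_0^{q,1}+k^{m+q}b_1^{q,1}+\cdots$, where, up to one common positive constant,
\[ b_0^{q,0} = \int_{-1}^1 (z+\bb)^q p_c(z)\,dz = \alpha_{0,q}, \qquad b_0^{q,1} = \int_{-1}^{\z}(\z-z)(z+\bb)^q p_c(z)\,dz, \]
while the subleading $b_1^{q,0}$ reproduces the quantity $\beta_{0,q}$ of \eqref{alpha-beta-r-q} (the boundary terms $(\bb\pm1)^q p_c(\pm1)$ coming from the endpoints $\ell=0,2k$), and $b_1^{q,1}$ collects the analogous curvature integral on $[-1,\z]$ together with Euler--Maclaurin boundary contributions at $z=-1$ and at the transition point $z=\z$.

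With these coefficients in hand I substitute into \eqref{Donaldson-Futaki}, writing ${\rm DF}_{(\hat\beta_{\bb},\p)}(\mathcal{X},\cL_{\z})=b_1^{(1-\p),1}-\big(b_0^{-(\p+1),1}/b_0^{-(\p+1),0}\big)b_1^{(1-\p),0}$, and use the hypothesis $\mathfrak{F}_{\Omega,K,\bb,\p}(K)=0$, which by Proposition~\ref{futaki} forces $A_1=0$. I expect the first term to supply the curvature piece $\int_{-1}^{\z}(\z-z)\frac{p_c(z)}{(z+\bb)^{\p-1}}\big(\sum_a\frac{2d_as_ax_a}{1+x_az}\big)\,dz$ of $\int_{-1}^{\z}(\z-t)Q(t)\,dt$ together with the linear boundary term $\frac{2p_c(-1)}{(\bb-1)^{\p-1}}(\z+1)$ (which correctly vanishes when $d_0>0$, since then $p_c(-1)=0$), while the correction term reproduces the remaining piece $-A_2\int_{-1}^{\z}(\z-z)\frac{p_c(z)}{(z+\bb)^{\p+1}}\,dz$ once one notes, from the first equation of \eqref{A1andA2} with $A_1=0$, that $b_1^{(1-\p),0}/b_0^{-(\p+1),0}$ is a positive multiple of $\beta_{0,1-\p}/\alpha_{0,-(\p+1)}=A_2/2$. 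Collecting the two contributions identifies ${\rm DF}_{(\hat\beta_{\bb},\p)}(\mathcal{X},\cL_{\z})$, up to a positive constant, with $G_{x,\bb,\p}(\z)=\frac{2p_c(-1)}{(\bb-1)^{\p-1}}(\z+1)+\int_{-1}^{\z}(\z-t)Q(t)\,dt$ from \eqref{G}, hence with $F_{\Omega,\bb,\p}(\z)=(\z+\bb)^{\p-1}G_{x,\bb,\p}(\z)$ up to the positive factor $(\z+\bb)^{\p-1}$.

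The main obstacle is the careful extraction of the subleading coefficients $b_1^{q,0},b_1^{q,1}$: each receives contributions both from the subleading term of the Hilbert polynomial of $e_{\infty}$ and from Euler--Maclaurin corrections at the endpoints $z=-1,\z,1$, and matching these against the pieces of \eqref{G} — including the bookkeeping of the $A_2$-term through \eqref{A1andA2} and \eqref{compatibility} — is where essentially all of the work lies; this closely follows the pattern of \cite[Sect.~3]{acgt} and \cite{sz}. A secondary point is to justify that the expansions \eqref{asymptotics} are valid for the real exponents $q=1-\p,-(\p+1)$: this is legitimate here because, after the reduction via \eqref{weight}, each trace is a finite sum of explicit positive powers $\big(\ell+k(\bb-1)\big)^q$ against genuine Hilbert polynomials, to which Euler--Maclaurin applies directly. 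Since the central fibre $M_0$ here is reducible (a union of two smooth varieties meeting along $E$), this furnishes precisely the instance, promised in the discussion preceding Definition~\ref{K-stability}, in which \eqref{asymptotics} can be verified beyond the integer-exponent, smooth-fibre, and toric cases.
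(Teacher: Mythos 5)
Your proposal is correct and takes essentially the same route as the paper's own proof: the eigenvalue bookkeeping from \eqref{weight} (with $B_{k,\bb}$ acting by $\ell+k(\bb-1)$ and $A_k$ by $(1+\z)k-\ell$ on the relevant summands), the two-term Riemann-sum/Euler--Maclaurin expansion of the traces against the Hilbert polynomial of $e_{\infty}$ --- which is precisely how the paper validates \eqref{asymptotics} for the real exponents $q=1-\p,\,-(\p+1)$, via \eqref{riemann-sum} and \eqref{multiplicity} --- and the substitution $2\beta_{0,1-\p}=\alpha_{0,-(1+\p)}A_2$ from the first equation of \eqref{A1andA2} with $A_1=0$ are exactly the paper's steps. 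The paper likewise concludes ${\rm DF}_{(\hat \beta_{\bb},\p)}(\mathcal X, \cL_{\z})=\tfrac{1}{4}G_{x,\bb,\p}(\z)=\tfrac{1}{4}(\z+\bb)^{1-\p}F_{\Omega,\bb,\p}(\z)$, matching your identification up to the positive factor $(\z+\bb)^{\p-1}$.
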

\begin{proof} The computations are essentially  identical to those in \cite[pp. 589--591 ]{acgt}, so we shall be brief. The dimension of $H^0(e_{\infty}, L_{|e_{\infty}}^k \otimes S^{uk+v}(\nu^*_{\infty}))$  is computed in \cite{acgt} to be (up to a common positive constant which we shall ignore)
\begin{equation}\label{multiplicity}
k^{m-1}p_c^s(u-1+ v/k) + O(k^{m-3}),
\end{equation}
where $p_c^s(t):=\prod_{a\in \cA}(1+x_a(t+ s_a/2k))^{d_a}$.

In order to compute ${\rm Tr}(B_{k,\bb})^q$, we shall use that for any smooth function on the interval $[0, R]$, $R\in \Q_{>0}$, we have
\begin{equation}\label{riemann-sum}
\sum_{i=\varepsilon}^{Rk} f\Big(\frac{i}{k}\Big) =k \int_{0}^R f(t)dt + \frac{1}{2}\Big(f(R) + (-1)^{\varepsilon} f(0)\Big) + O(k^{-1}),
\end{equation}
where $\varepsilon={0,1}$. The above estimate is established in \cite{RT} for  a polynomial (see also \cite[Lemma~9]{acgt}), but it  also holds for any smooth function
 $f$, see~\cite{GS,zeld}.

We have already shown that $(B_k/k)^q$ acts on $H^0(e_{\infty}, L_{|e_{\infty}}^k \otimes S^{2k-i}(\nu^*_{\infty}))$ as $((1+\bb) -i/k){\rm id}$. We thus obtain (by using \eqref{multiplicity} and \eqref{riemann-sum})
\begin{equation}\label{1}
\begin{split}
k^{-q -m +1}{\rm Tr}(B_{k,\bb})^q  = &\Big(\sum_{i=0}^{2k}\big((1+\bb) - i/k\big)^q p_c^s(1-i/k)\Big) + O(k^{-1}) \\
                             = & k \int_{0}^2 (1+\bb-t)^qp_c^s(1-t) dt \\
                                 & + \frac{1}{2}\big((\bb-1)^qp_c^s(-1) + (1+\bb)^qp_c^s(1)\big) + O(k^{-1}) \\
                             = & k \int_{-1}^{1}(t+ \bb)^q p_c(t)dt  \\
                                & + \frac{1}{2}  \int_{-1}^1\Big(\sum_{a\in \cA} \frac{x_ad_as_a}{1+x_at}\Big)  p_c(t) (t+ \bb)^q dt \\
                                & + \frac{1}{2}\big((\bb-1)^qp_c(-1) + (1+\bb)^qp_c(1)\big) + O(k^{-1}) \\
                               =&   k \alpha_{0,q} + \frac{1}{2} \beta_{0,q} + O(k^{-1}),
\end{split}
\end{equation}
where $\alpha_{r,q}$ and $\beta_{r,q}$  are defined by \eqref{alpha-beta-r-q}.

Similarly,  we have
\begin{equation}\label{2}
\begin{split}
k^{-q-m} {\rm Tr} (A_k B_{k,\bb}^q) =& \sum_{j=1}^{(1+\z)k} (j/k)\big((\z+\bb) - j/k\big)^q p_c^s(\z-j/k)\Big) + O(k^{-2}) \\
                                                   =& k \int_{0}^{1+\z}t\big((\z+\bb)-t\big)^qp_c^s(\z-t)dt \\
                                                       &+ \frac{1}{2}(1+\z)(\bb-1)^q p_c(-1)  + O(k^{-1})\\
                                                      =&  k \int_{-1}^{\z} (\z-t)(t+\bb)^qp_c(t) dt  \\
                                                         &+ \frac{1}{2}\int_{-1}^{\z} \Big(\sum_{a\in \cA}\frac{d_as_ax_a}{1+x_at}\Big)(\z-t)(t+\bb)^q p_c(t) dt  \\
                                                         &+\frac{1}{2}(1+\z)(\bb-1)^q p_c(-1)  + O(k^{-1}).
\end{split}
\end{equation}
Notice that \eqref{1}-\eqref{2} show that the expansions \eqref{asymptotics} hold for our test configurations (for any value of the parameter $\z\in (-1, 1)\cap \Q)$. Our remaining task is to compute the corresponding $(\hat \beta_{\bb}, \p)$-Donaldson--Futaki invariants.

\smallskip
By \eqref{1}, we obtain
\begin{equation}\label{c}
b_{0}^{-(\p+1),0}(\hat \beta_{\bb}) = \alpha_{0, -(\p+1)},\qquad b_1^{1-\p,0}(\hat \beta_{\bb}) = \frac{1}{2} \beta_{0, 1-\p}.
\end{equation}
whereas \eqref{2} yields
\begin{equation*}
\begin{split}
b_{0}^{-(\p+1),1} (\hat \beta_{\bb},\hat \alpha)= &\int_{-1}^{\z} (\z-t)(t+\bb)^{-(\p+1)}p_c(t) dt  \\
b_{1}^{(1-\p), 1}(\hat \beta_{\bb}, \hat \alpha)  =  & \frac{1}{2}\int_{-1}^{\z} \Big(\sum_{a\in \cA}\frac{d_as_ax_a}{1+x_at}\Big)(\z-t)(t+\bb)^{1-\p} p_c(t) dt  \\
                                                         &+\frac{1}{2}(1+\z)(\bb-1)^{1-\p} p_c(-1).
\end{split}
\end{equation*}
Substituting in \eqref{Donaldson-Futaki},  and using also $A_1=0$ (so that, by the first relation in \eqref{A1andA2} $2\beta_{0,-\p+1}= \alpha_{0,-(1+\p)}A_2$),  \eqref{fbp} and \eqref{G}, we get
\begin{equation*}
\begin{split}
{\rm DF}_{(\hat \beta_{\bb},\p)} (\mathcal X, \cL_{\z}) = & b_1^{(1-\p),1}(\hat \beta_{\bb}, \hat \alpha) -\left(\frac{b_1^{1-\p,0}(\hat \beta_{\bb})}{b_0^{-(1+\p),0}(\hat \beta_{\bb})}\right) b_0^{-(\p+1),1}(\hat \beta_{\bb}, \hat \alpha)\\
                                                      = &  \frac{1}{4}\int_{-1}^{\z} \Big(\sum_{a\in \cA}\frac{2d_as_ax_a}{1+x_at}\Big)(\z-t)(t+\bb)^{1-\p} p_c(t) dt  \\
                                                         &+ \frac{1}{2}(1+\z)(\bb-1)^{1-\p} p_c(-1) - \frac{1}{4}\int_{-1}^{\z} (\z-t)(t+\bb)^{-(\p+1)}A_2p_c(t) dt \\
                                                     = & \frac{1}{4} G_{x,\bb,\p}(\z) =\frac{1}{4}\Big((\z+\bb)^{1-\p} F_{\Omega,\bb,\p}(\z)\Big).
                                                     \end{split}
                                                     \end{equation*}
The claim follows. \end{proof}
\begin{prop} \label{admissible-stable} {\it Let $M = P(E_0 \oplus E_{\infty})\to S$ be an admissible manifold over a CSCK base, and $L$ an admissible polarization of $M$ which corresponds, up to a scale, to an admissible K\"ahler class $\Omega$. If for some $\bb>1$ $(M, L)$ is $(\hat \beta_{\bb}, \p)$-K-stable, then $\mathfrak{F}_{\Omega, K, \bb, \p}(K)=0$ and $F_{\Omega,\bb,\p}(z)>0$ on $(-1,1)\cap \Q$. }
\end{prop}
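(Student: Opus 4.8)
The plan is to derive both assertions directly from the two clauses of the $(\hat \beta_{\bb}, \p)$-K-stability hypothesis, feeding them into the Donaldson--Futaki computation of Proposition~\ref{futaki-ruled}. The essential observation is that the two clauses must be tested against two different families of configurations: the \emph{product} configurations, which will kill the Futaki invariant, and the \emph{admissible} degenerations to the normal cone, whose positivity will then yield the positivity of $F_{\Omega,\bb,\p}$.

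First I would establish $\mathfrak{F}_{\Omega, K, \bb, \p}(K)=0$, since this is a standing hypothesis of Proposition~\ref{futaki-ruled} and so must be secured before that computation can be invoked. The idea is to test stability against the product configuration $\mathcal{X} = M\times \C$, $\cL = L\otimes \cO_{\C}$, equipped first with the $\C^*$-action generated by $K$ and then with the one generated by $-K$. Both are normal $\hat\beta_{\bb}$-compatible test configurations, and by Remark~\ref{quasi-periodic}(2) the expansions \eqref{asymptotics} hold on their central fibres, with ${\rm DF}_{(\hat\beta_{\bb}, \p)}$ equal, up to a common positive constant, to $\mathfrak{F}_{\Omega, K, \bb, \p}$ evaluated on $\pm K$ (independently of the chosen lift). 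Since $(\hat\beta_{\bb},\p)$-K-stability entails $(\hat\beta_{\bb},\p)$-K-semistability, both invariants are $\ge 0$; by linearity of the Futaki invariant this gives $\mathfrak{F}_{\Omega, K, \bb, \p}(K)\ge 0$ and $-\mathfrak{F}_{\Omega, K, \bb, \p}(K)\ge 0$, whence $\mathfrak{F}_{\Omega, K, \bb, \p}(K)=0$.

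With the Futaki invariant now known to vanish, Proposition~\ref{futaki-ruled} becomes available, and I would apply it to the admissible test configurations $(\mathcal{X}, \cL_{\z})$, $\z\in \Q\cap(-1,1)$, arising as the degeneration to the normal cone of the infinity section $e_{\infty}=P(0\oplus E_{\infty})$. That proposition (with the asymptotics checked in its proof) gives ${\rm DF}_{(\hat\beta_{\bb},\p)}(\mathcal{X}, \cL_{\z}) = \tfrac14 (\z+\bb)^{1-\p}F_{\Omega,\bb,\p}(\z)$, a fixed positive multiple of $F_{\Omega,\bb,\p}(\z)$ since $\z+\bb>0$. Because $e_{\infty}$ is a proper divisor, each such degeneration is genuinely nontrivial: its central fibre splits into two components meeting along $E=P(\nu_{e_{\infty}})$, so it is not isomorphic to the product $M\times \C$, $L\otimes\cO_{\C}$. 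The strict clause of $(\hat\beta_{\bb},\p)$-K-stability therefore forces ${\rm DF}_{(\hat\beta_{\bb},\p)}(\mathcal{X}, \cL_{\z})>0$, and hence $F_{\Omega,\bb,\p}(\z)>0$ for every $\z\in(-1,1)\cap\Q$. The restriction to rationals is intrinsic, as only rational $\z=c-1$ give honest polarizations $\cL_{\z}$ in the Seshadri family $c\in(0,2)\cap\Q$.

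The point requiring care---more a matter of bookkeeping than a genuine obstacle---is precisely the logical ordering just used: Proposition~\ref{futaki-ruled} cannot be cited until $\mathfrak{F}_{\Omega, K, \bb, \p}(K)=0$ is in hand, so the product-configuration argument must come first and the normal-cone argument second. The only other step needing a clean justification is that the admissible degenerations are non-product test configurations, so that the equality case of K-stability is inapplicable to them; this follows from the two-component structure of the central fibre recorded above.
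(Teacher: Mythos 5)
Your proposal is correct and follows essentially the same route as the paper: test first on product configurations to force $\mathfrak{F}_{\Omega, K, \bb, \p}(K)=0$, then invoke Proposition~\ref{futaki-ruled} on the normal-cone degenerations $(\mathcal X, \cL_{\z})$, whose non-product central fibre makes the strict clause of stability yield $F_{\Omega,\bb,\p}(\z)>0$ on $(-1,1)\cap \Q$. The only cosmetic difference lies in the first step: you deduce the vanishing from semistability applied to the actions generated by $\pm K$ via Remark~\ref{quasi-periodic}(2), whereas the paper computes ${\rm DF}_{(\hat\beta_{\bb},\p)}$ of the product configuration explicitly, finds it equal to a positive multiple of $A_1$ (using $\alpha_{1,q}^2 < \alpha_{0,q}\alpha_{2,q}$), and concludes $A_1=0$ from the equality clause of Definition~\ref{K-stability} --- both arguments are sound.
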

\begin{proof} We first prove that if an admissible polarized manifold $(M, L)$ is $(\hat \beta_{\bb}, \p)$-K-stable, then $\mathfrak{F}_{\Omega, K, \bb, \p}(K)=0$, i.e. $A_1=0$, see Proposition~\ref{futaki}. To this end,  we consider the product  test configuration $\mathcal X= M\times \C$ with $\pi: \mathcal X \to \C$ being the projection to the $\C$-factor, and the polarization $\cL= L \otimes \cO_{\C}$. We endow $\mathcal X$ with  the $\C^*$-action given by $\beta$ on $M$ and the standard $\C^*$-action on $\C$, and consider, via $\beta_0$,  the lifted action on $\mathcal{L}$. Thus, the central fibre of this test configuration is $(M,L)$ with induced $\C^*$-action $\alpha=\beta_{0}$.  A computation similar to \eqref{1} shows that \eqref{asymptotics} hold true with
\begin{equation*}
b_0^{-(1+\p), 1}(\hat \beta_{\bb}, \alpha) =\alpha_{1, -(1+\p)}, \qquad  b_1^{1-\p, 1}(\hat \beta_{\bb}, \alpha) = \frac{1}{2} \beta_{1, (1-\p)}.
\end{equation*}
and \eqref{c}. It follows from the definition of ${\rm DF}_{(\hat \beta_{\bb}, \p)}$ and $(\hat \beta_{\bb}, \p)$-K-stability that
\begin{equation*}
\begin{split}
{\rm DF}_{(\hat \beta_{\bb}, \p)} (\mathcal X, \cL) &= \frac{1}{2} \Big(\beta_{1, (1-\p)} - \frac{\alpha_{1, -(1+\p)}}{\alpha_{0, -(1+\p)}} \beta_{0, (1-\p)}\Big) \\
                                                       &= \Big(\frac{\alpha_{0, -(1+\p)}\alpha_{2, -(1+\p)}-\alpha_{1, -(1+\p)}^2}{4\alpha_{0, -(1+\p)}}\Big) A_1\\
                                                       &=0.
                                                       \end{split}
                                                       \end{equation*}
Thus, we have $A_1=0$ or, equivalently, $\mathfrak{F}_{\Omega, K, \bb, \p}(K)=0$, see Proposition~\ref{futaki}.  By Proposition~\ref{futaki-ruled}, the corresponding function  satisfies $F_{\Omega,\bb,\p}(\z)>0$ for $\z\in (-1,1)\cap \Q$.  \end{proof}

\bigskip
By Lemma~\ref{reduced-scalar}, the $(\hat \beta_{\bb}, \p)$-extremal vector field  of an admissible polarized manifold $(M, L)$ (endowed with a maximal torus $\T \subset {\rm Aut}(M,L)$ covering a maximal torus of the isometry group of an admissible K\"ahler metric) is  $A_1K$. As the definition of the relative $(\beta_a, \p)$-Donaldson--Futaki invariant given in Definition~\ref{relative-K-stability} does not change if we replace  $\gamma_{\rm ex}$ by a non-zero multiple, and the inner product \eqref{inner-product}  does not depend on the chosen lift of the action to $L$, we can assume that $\gamma_{\rm ex}$ corresponds the the $\Sph^1$-action $\beta_0$. Calculations similar to the ones in the proofs of Propositions~\ref{futaki-ruled} and \ref{admissible-stable} allow us to  compute the relative $(\hat \beta_{\bb}, \p)$-Donaldson--Futaki invariant of an admissible test configuration
\begin{prop}\label{relative-futaki-ruled} Let $M = P(E_0 \oplus E_{\infty})\to S$ be an admissible manifold over a CSCK base, and $L$ an admissible  polarization of $M$  corresponding, up to a scale, to an admissible K\"ahler class $\Omega$. Then, the relative $(\hat \beta_{\bb}, \p)$-Donaldson--Futaki  invariant with respect to a maximal torus $\T \subset {\rm Aut}(M, L)$ covering a maximal torus of isometries of an admissible K\"ahler metric on $M$ of the test configuration  $(\mathcal X, \cL_{\z})$ with parameter $\z\in \Q \cap (-1, 1)$ corresponding to the degeneration of the normal cone of the infinity section $e_{\infty}=P(0 \oplus E_{\infty})$,  is  a positive multiple of  $F_{\Omega,\bb,\p}(\z)$. In particular, if $(M, L)$ is relative $(\hat \beta_{\bb}, \p)$-K-stable, then $F_{\Omega,\bb,\p}(\z)>0$ on $(-1,1)\cap \Q$.
\end{prop}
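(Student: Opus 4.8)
The plan is to unwind the definition of the relative invariant from Definition~\ref{relative-K-stability},
$$
{\rm DF}^{\gamma_{\rm ex}}_{(\hat\beta_{\bb},\p)}(\mathcal X,\cL_{\z})={\rm DF}_{(\hat\beta_{\bb},\p)}(\mathcal X,\cL_{\z})-\frac{\langle \hat\alpha,\gamma_{\rm ex}\rangle_{(\hat\beta_{\bb},\p)}}{\langle \gamma_{\rm ex},\gamma_{\rm ex}\rangle_{(\hat\beta_{\bb},\p)}}\,{\rm DF}_{(\hat\beta_{\bb},\p)}(\gamma_{\rm ex}),
$$
into three computable pieces (here $\hat\alpha$ is the central-fibre action denoted $\alpha$ in Definition~\ref{relative-K-stability}), and to show that the correction term cancels exactly the part of the absolute invariant that forced the hypothesis $A_1=0$ in Proposition~\ref{futaki-ruled}. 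By Lemma~\ref{reduced-scalar} the $(\hat\beta_{\bb},\p)$-extremal vector field is $A_1K$; since neither the product \eqref{inner-product} nor the ratio above changes under rescaling $\gamma_{\rm ex}$, I would take $\gamma_{\rm ex}=\beta_0$, the lift of $K$ whose generator quantizes the Killing potential $z$.

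For the first piece I would rerun the computation in the proof of Proposition~\ref{futaki-ruled} \emph{without} imposing $A_1=0$. That assumption entered only in simplifying $\tfrac12\beta_{0,1-\p}$; retaining instead the full first equation of \eqref{A1andA2}, namely $2\beta_{0,1-\p}=\alpha_{1,-(1+\p)}A_1+\alpha_{0,-(1+\p)}A_2$, the same manipulation via \eqref{fbp} and \eqref{G} yields
$$
{\rm DF}_{(\hat\beta_{\bb},\p)}(\mathcal X,\cL_{\z})=\tfrac14(\z+\bb)^{1-\p}F_{\Omega,\bb,\p}(\z)+\tfrac{A_1}{4}\Big(I_1-\tfrac{\alpha_{1,-(1+\p)}}{\alpha_{0,-(1+\p)}}\,I_0\Big),
$$
where $I_j:=\int_{-1}^{\z}t^{\,j}(\z-t)(t+\bb)^{-(1+\p)}p_c(t)\,dt$. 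For the third piece, the computation in the proof of Proposition~\ref{admissible-stable} gives ${\rm DF}_{(\hat\beta_{\bb},\p)}(\gamma_{\rm ex})=\big(\alpha_{0,-(1+\p)}\alpha_{2,-(1+\p)}-\alpha_{1,-(1+\p)}^2\big)A_1/\big(4\alpha_{0,-(1+\p)}\big)$.

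The substantive new computation is the weighted product \eqref{inner-product}, which I would evaluate through the expansion \eqref{general-expansion} and the Riemann-sum estimate \eqref{riemann-sum}, just as in \eqref{1}--\eqref{2}. On the central-fibre decomposition \eqref{weight} the generator $C_k$ of $\gamma_{\rm ex}=\beta_0$ is $B_{k,\bb}-\bb k\,{\rm Id}$ (the two lifts of $K$ differ by the fibre weight $\bb$), so $C_k/k$ converges to the position function $t\mapsto t$ on $[-1,1]$, while $\hat\alpha$ acts with $A_k/k$ converging to $t\mapsto(\z-t)_{+}$, as already read off in \eqref{2}. Hence both $\langle\gamma_{\rm ex},\gamma_{\rm ex}\rangle_{(\hat\beta_{\bb},\p)}$ and $\langle\hat\alpha,\gamma_{\rm ex}\rangle_{(\hat\beta_{\bb},\p)}$ reduce to weighted covariances against $d\mu=(t+\bb)^{-(1+\p)}p_c(t)\,dt$, giving, up to one common positive normalization, $\langle\gamma_{\rm ex},\gamma_{\rm ex}\rangle\propto\alpha_{0,-(1+\p)}\alpha_{2,-(1+\p)}-\alpha_{1,-(1+\p)}^2$ and $\langle\hat\alpha,\gamma_{\rm ex}\rangle\propto\alpha_{0,-(1+\p)}I_1-\alpha_{1,-(1+\p)}I_0$.

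Assembling the three pieces, the ratio times ${\rm DF}_{(\hat\beta_{\bb},\p)}(\gamma_{\rm ex})$ equals $\tfrac{A_1}{4}\big(I_1-\tfrac{\alpha_{1,-(1+\p)}}{\alpha_{0,-(1+\p)}}I_0\big)$, which is precisely the $A_1$-term of the absolute invariant; the two cancel and ${\rm DF}^{\gamma_{\rm ex}}_{(\hat\beta_{\bb},\p)}(\mathcal X,\cL_{\z})=\tfrac14(\z+\bb)^{1-\p}F_{\Omega,\bb,\p}(\z)$. Since $\bb>1$ and $\z\in(-1,1)$ force $(\z+\bb)^{1-\p}>0$, this is a positive multiple of $F_{\Omega,\bb,\p}(\z)$, which proves the first assertion; unlike in Proposition~\ref{futaki-ruled}, no vanishing of the Futaki invariant is required. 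For the final claim, relative $(\hat\beta_{\bb},\p)$-K-stability forces ${\rm DF}^{\gamma_{\rm ex}}_{(\hat\beta_{\bb},\p)}>0$ on every non-product admissible test configuration; as $(\mathcal X,\cL_{\z})$ is non-product for $\z\in(-1,1)\cap\Q$, we conclude $F_{\Omega,\bb,\p}(\z)>0$ there. I expect the main obstacle to be the inner-product step: correctly identifying the generators of $\gamma_{\rm ex}$ and $\hat\alpha$ on the reducible central fibre and matching the normalization implicit in \eqref{inner-product}, so that the \emph{same} positive constant appears in both $\langle\gamma_{\rm ex},\gamma_{\rm ex}\rangle$ and $\langle\hat\alpha,\gamma_{\rm ex}\rangle$. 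Once this is pinned down, the exact cancellation of $A_1$ is the crux that removes the Futaki-vanishing hypothesis.
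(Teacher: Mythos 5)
Your proposal is correct and follows exactly the route the paper sketches: rerun the computation of Proposition~\ref{futaki-ruled} without imposing $A_1=0$, take $\gamma_{\rm ex}=\beta_0$ (using Remark~\ref{quasi-periodic} and Lemma~\ref{reduced-scalar}), evaluate ${\rm DF}_{(\hat\beta_{\bb},\p)}(\gamma_{\rm ex})$ as in Proposition~\ref{admissible-stable}, and compute the weighted products \eqref{inner-product} as covariances against $(t+\bb)^{-(1+\p)}p_c(t)\,dt$, so that the correction term $\frac{A_1}{4}\bigl(I_1-\frac{\alpha_{1,-(1+\p)}}{\alpha_{0,-(1+\p)}}I_0\bigr)$ cancels and ${\rm DF}^{\gamma_{\rm ex}}_{(\hat\beta_{\bb},\p)}(\mathcal X,\cL_{\z})=\frac14(\z+\bb)^{1-\p}F_{\Omega,\bb,\p}(\z)$. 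The paper leaves these "calculations similar to the ones in the proofs of Propositions~\ref{futaki-ruled} and \ref{admissible-stable}" implicit, and your write-up supplies precisely the intended details, including the correct identification $C_k=B_{k,\bb}-\bb k\,{\rm Id}$ of the generator of $\beta_0$ on the central fibre.
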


\section{Proof of the main results}\label{proofs}
\subsection{Existence results for  admissible $(z+\bb, \p)$-extremal K\"ahler metrics} In this section, we  use Proposition~\ref{emsolution} to  construct admissible $(z+\bb, \p)$-extremal K\"ahler manifolds.
\begin{thm} \label{fextremalexistence}
Suppose that $M=P(E_{0} \oplus E_{\infty}) \rightarrow S$ is an admissible manifold over a CSCK base $S$. Then, for every choice of
$\bb, \p \in {\mathbb R}$ such that $\bb>1$,  $M$ admits an admissible $(z+\bb, \p)$-extremal K\"ahler metric in the admissible K\"ahler class  $\Omega_x$ if the  parameters $x=(x_a, a\in \cA)$ are sufficiently small. If, furthermore, $S$ is a local product of non-negative CSCK metrics, then  any admissible K\"ahler class on $M$ contains an admissible $(z+\bb, \p)$-extremal metric.
\end{thm}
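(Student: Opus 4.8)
The plan is to derive everything from Proposition~\ref{emsolution}: an admissible $(z+\bb,\p)$-extremal metric exists in $\Omega_x$ as soon as the associated function $F_{x,\bb,\p}(z)$ satisfies the positivity condition (i) of \eqref{positivityF}, conditions (ii)--(iii) being built into its construction. Writing $F_{x,\bb,\p}=\Theta\,\Mpc$ with $\Mpc(z)=\prod_{a\in\hat\cA}(1+x_az)^{d_a}>0$ on the \emph{open} interval $(-1,1)$, the sought positivity of $F_{x,\bb,\p}$ on $(-1,1)$ is equivalent to $\Theta>0$ there. Thus the whole theorem reduces to verifying strict positivity of $F_{x,\bb,\p}$ on $(-1,1)$, first for $\|x\|$ small and then, under the non-negativity hypothesis, for every admissible $x$.

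First I would treat small $x=(x_a)_{a\in\cA}$. By \eqref{A1andA2} the constants $A_1,A_2$ solve a linear system whose coefficient matrix is invertible for every admissible $x$ (its determinant $\alpha_{1,q}^2-\alpha_{0,q}\alpha_{2,q}$, with $q=-(1+\p)$, is negative by Cauchy--Schwarz), and the data $\alpha_{r,q},\beta_{r,q}$ of \eqref{alpha-beta-r-q} depend smoothly on $x$; hence $A_1,A_2$, and through \eqref{fbp}--\eqref{G} the entire function $F_{x,\bb,\p}$, depend continuously on $x$ in $C^{1}([-1,1])$. At $x=0$ the base contributions $\sum_{a\in\cA}\tfrac{2d_as_ax_a}{1+x_az}$ drop out, and $F_{0,\bb,\p}$ is given by an explicit one-variable formula which one checks to be strictly positive on $(-1,1)$. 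Since $\Theta'(\pm1)=\mp2$ independently of $x$, the metric is positive on fixed one-sided neighbourhoods of the endpoints uniformly for $x$ near $0$, while on each compact subinterval $[-1+\delta,1-\delta]$ the positivity of $F_{0,\bb,\p}$ survives small perturbations; together these give $F_{x,\bb,\p}>0$ on $(-1,1)$ for $\|x\|$ small, proving the first assertion.

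For the second assertion I would use that $S$ being a local product of non-negative CSCK factors translates, with the sign conventions for $(\pm g_a,\pm\omega_a)$, into $s_ax_a\ge0$ for every $a\in\cA$; together with the fibre contributions $s_0x_0=d_0+1>0$ and $s_\infty x_\infty=d_\infty+1>0$ this yields
\[
W(z):=\sum_{a\in\hat\cA}\frac{2d_as_ax_a}{1+x_az}\ \ge\ 0\qquad\text{on }(-1,1).
\]
Set $G(z):=F_{x,\bb,\p}(z)/(z+\bb)^{\p-1}$, so that $G(\pm1)=0$ and $G>0$ on $(-1,1)$ is the desired positivity. From the factorisation $G'=\Mpc\,\Psi$ established just before Proposition~\ref{emsolution}, with $\Psi(-1)=2(d_0+1)/(\bb-1)^{\p-1}>0$ and $\Psi(1)=-2(d_\infty+1)/(\bb+1)^{\p-1}<0$, and since $\Mpc>0$ on $(-1,1)$, it suffices to show that $\Psi$ changes sign exactly once: then $G$ is strictly increasing and then strictly decreasing, and $G(\pm1)=0$ forces $G>0$ in the interior. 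Differentiating $G'=\Mpc\,\Psi$ and using $G''=Q$ from \eqref{G-e} and \eqref{fbp} gives the first-order linear equation
\[
\Psi'+\frac{\Mpc'}{\Mpc}\,\Psi=\frac{W(z)}{(z+\bb)^{\p-1}}-\frac{A_1z+A_2}{(z+\bb)^{\p+1}},
\]
and I would control the zeros of $\Psi$ from the sign of its right-hand side at a putative zero of $\Psi$, exactly as in the extremal computation of \cite{acgt}; the hypothesis $W\ge0$ is what makes the sign analysis close.

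The hard part will be precisely this last step. In the small-$x$ regime positivity is obtained perturbatively from an explicit reference profile, but for an arbitrary admissible class there is no such reference point, and the extremal affine function $A_1z+A_2$, being fixed globally by the moment conditions \eqref{A1andA2}, carries no a priori sign; consequently the source $Q$ in $G''=Q$ is not sign-definite and one cannot conclude positivity of $G$ from a Green's-function representation alone. The crux is therefore the two-point boundary-value sign argument showing that $\Psi$ has a single sign change, equivalently that $G$ has no interior zero, for which the non-negativity $W\ge0$ is essential; the weight enters only through the strictly positive factors $(z+\bb)^{\pm(\p-1)}$ and $(z+\bb)^{\pm(\p+1)}$, so the argument runs parallel to the unweighted one in \cite{acgt}, which is the template I would adapt.
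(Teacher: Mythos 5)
Your first half follows the paper's own route in outline (continuity of $A_1,A_2$ via the non-degenerate linear system \eqref{A1andA2}, hence continuity of $F_{x,\bb,\p}$ in $x$, plus positivity at the limit $x=0$ and endpoint control from \eqref{endG}), but the one sentence ``$F_{0,\bb,\p}$ is given by an explicit one-variable formula which one checks to be strictly positive'' is where the real content sits, and you leave it unproved. Note also that at $x=0$ only the terms indexed by $a\in\cA$ drop out of $\sum_{a\in\hat{\cA}}\frac{2d_as_ax_a}{1+x_az}$; the fibre terms $a=0,\infty$ (with $x_0=1$, $x_\infty=-1$ fixed) survive, so the limiting right-hand side of \eqref{emNew} is not of one sign, and the paper verifies positivity of $\lim_{x\to 0}G$ not by an explicit formula but by a counting argument: the limit source has at most one, two, or three zeroes in $(-1,1)$ according to whether $d_0,d_\infty$ vanish, so $G$ has too few inflection points, given the boundary data \eqref{endG}, to vanish in the interior. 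This gap is fillable, but as written it is an assertion, not a proof.

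The genuine gap is in your second half. You propose to show that $\Psi$ in the factorisation $G'=\Mpc\,\Psi$ changes sign exactly once by evaluating the first-order equation $\Psi'+\frac{\Mpc'}{\Mpc}\Psi=\frac{W(z)}{(z+\bb)^{\p-1}}-\frac{A_1z+A_2}{(z+\bb)^{\p+1}}$ at a putative zero of $\Psi$; but at such a zero one gets $\Psi'=$ the right-hand side, which is \emph{not} sign-definite even when $W\ge 0$, because the affine term $A_1z+A_2$ --- which, as you yourself observe, carries no a priori sign --- can dominate. So the sign analysis does not close, and moreover ``exactly one sign change of $\Psi$'' is stronger than what is true: under the hypotheses $G''$ may change sign up to three times (when $d_0,d_\infty>0$) and $G$ is nevertheless positive. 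The Hwang--Guan device used by the paper (\cite{Hwang94,Guan95}, and \cite[Prop.~11]{acgt}) is engineered precisely to neutralise the unknown affine term: rewriting \eqref{emNew} as in \eqref{newem1}, $(z+\bb)^{\p+1}G''(z)=\big(\prod_{a\in\hat{\cA}}(1+x_az)^{d_a-1}\big)P(z)$ with $\deg P\le \#\hat{\cA}+1$, one evaluates $P$ at the poles $z=-1/x_a$, where $\Mpc$ vanishes to order $d_a$ so the $(A_1z+A_2)\Mpc$ contribution drops out entirely, leaving the values \eqref{newem2}, $P(-1/x_a)=2d_as_ax_a(-1/x_a+\bb)^2\prod_{b\ne a}\big(1-\tfrac{x_b}{x_a}\big)$, whose signs are controlled exactly by your hypothesis $s_ax_a\ge 0$. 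These pinned values force enough roots of the low-degree polynomial $P$ to lie outside $(-1,1)$, bounding the interior sign changes of $G''$, and the boundary conditions \eqref{endG} then rule out an interior zero of $G$. Your first-order ODE discards this polynomial structure, and with only $W\ge 0$ in hand the argument cannot be completed; this step, which you flagged as ``the hard part,'' is indeed where your proposal fails and where the paper's proof does something essentially different.
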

\begin{proof}
It is not hard to check that the linear system in \eqref{alpha-beta-r-q} does not degenerate as $x\rightarrow 0$ (meaning $x_a\rightarrow 0$ for all $a\in \cA$). In particular,
$ \displaystyle\lim_{x\rightarrow 0}A_1$ and $ \displaystyle\lim_{x\rightarrow 0}A_2$ both exist. Further,  the limit for $x\rightarrow 0$ of
the right hand side of \eqref{emNew} is a function of $z$ that has
\begin{itemize}
\item at most one zero in $(-1,1)$ if $d_0=d_\infty=0$.
\item at most two zeroes in $(-1,1)$ if either $d_0$ or $d_\infty$ are non-zero, but not both.
\item at most three zeroes in $(-1,1)$ if both $d_0$ and $d_\infty$ are non-zero.
\end{itemize}
Combined with the end point conditions \eqref{endG} in each of these three cases, $\displaystyle \lim_{x\rightarrow 0} G(z)$ will not have enough inflection points to have any zeroes over $-1<z<1$. Hence we can conclude that for $|x_a|$ sufficiently small for all $a\in \cA$,
$F_{x,\bb, \p}(z)$  satisfies  (i) of \eqref{positivityF} and the claim follows from Proposition~\ref{emsolution}.

\smallskip
Assuming that  that $S$ is a local product of non-negative CSCK metrics,  we shall adapt the root counting argument due to Hwang \cite{Hwang94} and Guan \cite{Guan95} (see also Proposition 11 in \cite{acgt}) to  check that that the function $F_{x, \bb, \p}(z)$  in Proposition~\ref{emsolution}  verifies  (i) of \eqref{positivityF}  for any admissible data $x_a, a\in \cA$, thus defining an admissible  $(z+\bb, \p)$-extremal metric in any admissible K\"ahler class.

The idea is to interpret \eqref{emNew} in a form similar to Equations (4) and (5) of \cite[Prop. 1]{acgt}. Indeed, \eqref{emNew} can be rewritten as follows
\begin{equation}\label{newem1}
(z+\bb)^{\p+1} G_{x,\bb,\p}''(z) = (\prod_{a\in \hat{\cA}} (1+x_a z)^{d_a-1})P(z),
\end{equation}
where $P(z)$ is a polynomial of degree $\leq \# \hat{\cA}+1,$ satisfying for all $a \in  \hat{\cA}$,
\begin{equation}\label{newem2}
P(-1/x_a) =  2 d_a s_a x_a  (-1/x_a + \bb)^2\prod_{b \in \hat{\cA} \setminus \{a\}} \!\!\!\left(1-\frac{x_b}{x_a}\right).
\end{equation}
Now, the  positivity (i) of \eqref{positivityF} is equivalent to positivity of $G_{x,\bb,\p}(z)=\frac{F_{x,\bb,\p}(z)}{(z+\bb)^{\p-1}}$ over $(-1,1)$.
Thus our task is to check that, under the assumptions in the theorem,
$G_{x,\bb,\p}(z)$ is positive for $-1<z<1$. Note that, for $-1<z<1$, the sign of $G_{x,\bb,\p}''(z)$ equals the sign of  \newline
$(z+\bb)^{p+1} G_{x,\bb,\p}''(z)$, and hence the sign of $P(z)$ defined by \eqref{newem1}-\eqref{newem2}. Using the boundary conditions \eqref{endG} for $G_{x,\bb,\p}(z)$, the proof now essentially follows the proof of  \cite[Prop.~11]{acgt} with some minor justifications.
\end{proof}

\subsection{A non-existence result for $(z+\bb, \p)$-CSCK metrics}   In this section we establish the following
\begin{thm}\label{lahdili2} {\it Let  $M$ be an admissible K\"ahler manifold over a CSCK base. Suppose that  $\Omega$ is an admissible K\"ahler class  which is a positive multiple of an element in  $H^2(M, \Z)$, and  the function $F_{\Omega,\bb,\p}(z)$ defined in Proposition~\ref{emsolution} has negative values on $(-1,1)$. Then,  $M$ does not admit a $(z+\bb,\p)$-CSCK metric in $\Omega$.}
\end{thm}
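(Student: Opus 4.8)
The plan is to argue by contradiction, setting the computation of the weighted Mabuchi energy in Proposition~\ref{Lahdili} against the recent boundedness theorem of Lahdili~\cite{lahdili2}. Suppose, contrary to the claim, that $\Omega$ contains a $(z+\bb,\p)$-CSCK metric $g'$ (a priori not admissible). First I would record that the existence of such a metric forces the vanishing of the $(z+\bb,\p)$-Futaki invariant of $\Omega$. Indeed, for a $(z+\bb,\p)$-CSCK metric the function $Scal_{z+\bb,\p}(g')$ is constant, so its reduced part $Scal_{z+\bb,\p}(g')^{{\perp}_{g'}}$ vanishes identically and the associated extremal vector field $Z=J\,{\rm grad}_{g'}\big(\Pi^{g',K,\bb,\p,\T}(Scal_{z+\bb,\p}(g'))\big)$ is zero. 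Since this extremal vector field is an invariant of $(M,J,\Omega,\T)$ which, by Lemma~\ref{reduced-scalar}, equals $A_1 K$, we conclude $A_1=0$; equivalently, by Proposition~\ref{futaki}, the $(z+\bb,\p)$-Futaki invariant of $\Omega$ vanishes.

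Next I would confront two opposite conclusions about the relative $(K,\bb,\p)$-Mabuchi energy ${\mathcal M}^{\T}_{(\Omega,K,\bb,\p)}$ of $(M,J,\Omega,\T)$. On the one hand, the hypothesis that $F_{\Omega,\bb,\p}(z)$ is strictly negative somewhere on $(-1,1)$ is exactly what Proposition~\ref{Lahdili} requires in order to conclude that this functional is unbounded from below (the unboundedness being detected already on the subspace $\mathcal{K}^{\rm adm}(M,\omega)\subset \mathcal{K}^{\T}(M,J,\Omega)$ via the explicit formula of Proposition~\ref{mabuchi}). On the other hand, since we have shown $A_1=0$, the relative energy coincides with the ordinary $(z+\bb,\p)$-Mabuchi functional of \cite{lahdili2} (see the remark following Definition~\ref{d:mabuchi}); and here the rationality hypothesis on $\Omega$ becomes essential, for Lahdili's theorem~\cite{lahdili2}---the $(z+\bb,\p)$-analogue of the boundedness result (a) used in \cite{acgt}---asserts that the existence of a $(z+\bb,\p)$-CSCK metric in a rational class with vanishing weighted Futaki invariant forces this Mabuchi functional to be bounded from below. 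These two statements are incompatible, so no $(z+\bb,\p)$-CSCK metric can exist in $\Omega$, which is the claim.

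The main obstacle is not in the short logical skeleton but in correctly importing the external input from \cite{lahdili2}: one must verify that the hypotheses of that boundedness theorem---rationality of $\Omega$, vanishing of the weighted Futaki invariant, and the normalization $f=z+\bb$ of the Killing potential---are precisely the ones we have arranged, and that the functional whose unboundedness Proposition~\ref{Lahdili} exhibits on admissible metrics is the very relative $(K,\bb,\p)$-Mabuchi energy to which Lahdili's theorem applies. A secondary point requiring care is the reduction to $\T$-invariant metrics: one should confirm that the lower bound furnished by \cite{lahdili2} may be tested on the space $\mathcal{K}^{\T}(M,J,\Omega)$ containing $\mathcal{K}^{\rm adm}(M,\omega)$, so that the two conclusions genuinely concern the same functional on the same space of potentials.
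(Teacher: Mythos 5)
Your proposal is correct and follows essentially the same route as the paper: deduce $A_1=0$ (i.e.\ vanishing of the $(K,\bb,\p)$-Futaki invariant, via Proposition~\ref{futaki}) from the existence of a $(z+\bb,\p)$-CSCK metric, then play Lahdili's boundedness theorem for rational classes from \cite{lahdili2} against the unboundedness of the relative $(K,\bb,\p)$-Mabuchi energy supplied by Proposition~\ref{Lahdili}. The only cosmetic difference is that the paper simply cites \cite{am,lahdili} for the necessity of the Futaki vanishing, whereas you rederive it from Lemma~\ref{reduced-scalar}, and the paper also notes explicitly that all hypotheses are invariant under positive rescaling of $\Omega$, which is how the normalization issues you flag are dispatched.
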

\begin{proof} It was observed in \cite{am,lahdili} that the vanishing of the $(K, \be, \p)$-Futaki invariant $\mathfrak{F}_{(\Omega, K, \be, \p)}$  is a necessary condition for the existence of an $(f, \p)$-CSCK metric in $\Omega$.  In the admissible setting, by using Proposition~\ref{futaki}, this corresponds to the condition that $A_1$ given by \eqref{A1andA2} vanishes.  In the remainder of the argument, we can therefore assume that $\mathfrak{F}_{(\Omega, K, \be, \p)}=0$.   A. Lahdili proved in \cite[Thm.~1]{lahdili2} that if $(M, J, \Omega, K, \T)$ is a compact K\"ahler manifold  as  in Section~\ref{s:mabuchi-futaki},  such that the K\"ahler class  $\frac{1}{2\pi}\Omega \in H^2(M, \Z)$  and  the $(K, \be, \p)$-Futaki invariant $\mathfrak{F}_{(\Omega, K, \be, \p)}$ vanishes on ${\rm Lie}(\T)$,   then  the boundedness from below of the relative $(K, \be, \p)$-Mabuchi functional ${\mathcal M}^{\T}_{(\Omega, K, \be, \p)}$ is a necessary condition for the existence of an $(f, \p)$-CSCK metric in $\Omega$.   As all of the conditions are invariant under a positive scale of $\Omega$, combining Lahdili's result with Proposition~\ref{Lahdili}  concludes the proof. \end{proof}
Notice that in the admissible setting,  the assumption  that a positive multiple of $\Omega_x$ belongs to $ H^2(M, \Z)$ corresponds to admissible data $x=(x_a, a\in \cA)$ such that   $x_a\in \Q, a\in \cA$.

\subsection{A Yau--Tian--Donaldson type correspondence} Because of  Propositions~\ref{futaki}, \ref{admissible-stable} and~\ref{relative-futaki-ruled}, we give the following
\begin{defn}\label{d:admissible-stable} Let $M = P(E_0 \oplus E_{\infty})\to S$ be an admissible manifold over a CSCK base, $L$ an admissible polarization of $M$,  which defines, up to a scale, an admissible K\"ahler class $\Omega$.
\begin{enumerate}
\item[\rm (a)] We say that $(M, L)$ is $(\hat \beta_{\bb}, \p)$-K-semistable/$(\hat \beta_{\bb}, \p)$-K-stable/{\it analytically} $(\hat \beta_{\bb}, \p)$\linebreak -K-stable on {\it admissible test configurations} if $A_1$ given by \eqref{A1andA2} vanishes and, respectively,\linebreak  $F_{\Omega,\bb,\p}(z)\ge 0$ on $(-1,1)$/$F_{\Omega,\bb,\p}(z)>0$ on $(-1,1)\cap \Q$/$F_{\Omega,\bb,\p}(z)>0$ on $(-1,1)$.
\item[\rm (b)]  Similarly,  $(M, L)$ is said to be relative $(\hat \beta_{\bb}, \p)$-K-semistable/relative $(\hat \beta_{\bb}, \p)$-K-stable\linebreak /analytically relative  $(\hat \beta_{\bb}, \p)-$K-stable  on {admissible test configurations} if $F_{\Omega,\bb,\p} \ge 0$ on $(-1,1)$/$F_{\Omega,\bb,\p}(z)>0$ on $(-1,1)\cap \Q$/$F_{\Omega,\bb,\p}>0$ on $(-1,1)$.
\end{enumerate}

Our discussion from the previous sections can be  summarized in the following
\begin{thm}\label{thm:admissible-stable} Let $M = P(E_0 \oplus E_{\infty})\to S$ be an admissible manifold, and $L$ an admissible polarization of $M$ which defines, up to a scale, an admissible K\"ahler class $\Omega$.

$\bullet$   If $(M,L)$ is analytically relative $(\hat \beta_{\bb}, \p)$-K-stable (resp. analytically $(\hat \beta_{\bb}, \p)$-K-stable) with respect to admissible test configurations, then there exists an admissible $((z+ \bb), \p)$-extremal K\"ahler metric in $\Omega$ (resp. an admissible K\"ahler metric of  constant $((z+ \bb), \p)$-scalar curvature).

$\bullet$  If  $\Omega$  admits a K\"ahler metric of constant $((z+ \bb), \p)$-scalar curvature, then $(M,L)$ is $(\hat \beta_{\bb}, \p)$-K-semistable with respect to admissible test configurations.
\end{thm}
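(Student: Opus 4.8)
The plan is to assemble the statement from the analytic results already in hand: Proposition~\ref{emsolution} (which produces the metric once $F_{\Omega,\bb,\p}$ is positive), Proposition~\ref{futaki} (which identifies the vanishing of $A_1$ with that of the weighted Futaki invariant), and Theorem~\ref{lahdili2} (the non-existence result). The two bullets are, respectively, the existence (``sufficiency'') and the obstruction (``necessity'') halves of the correspondence, and each is obtained by unwinding Definition~\ref{d:admissible-stable} and invoking the appropriate result, so that no essentially new computation is required here.

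For the first bullet I would begin by translating the stability hypotheses into positivity statements about $F_{\Omega,\bb,\p}$. By Definition~\ref{d:admissible-stable}(b), analytic relative $(\hat\beta_\bb,\p)$-K-stability on admissible test configurations means exactly that $F_{\Omega,\bb,\p}(z)>0$ on $(-1,1)$, i.e.\ that condition (i) of \eqref{positivityF} holds. Proposition~\ref{emsolution} then directly furnishes an admissible $(z+\bb,\p)$-extremal K\"ahler metric in $\Omega$ with Killing potential $z+\bb$, which is the assertion. For the non-relative case, Definition~\ref{d:admissible-stable}(a) adds the requirement that the constant $A_1$ of \eqref{A1andA2} vanish; the concluding sentence of Proposition~\ref{emsolution} states precisely that, under $A_1=0$, the extremal metric so produced is in fact $(z+\bb,\p)$-CSCK. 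Thus the first bullet reduces to matching definitions to Proposition~\ref{emsolution}.

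For the second bullet I would argue by assembling two independent necessary conditions. Suppose $\Omega$ carries a $(z+\bb,\p)$-CSCK metric. First, the vanishing of the weighted $(K,\bb,\p)$-Futaki invariant is a necessary condition for the existence of such a metric, as established in \cite{am,lahdili}; in the admissible setting, Proposition~\ref{futaki} identifies this vanishing (evaluated on $K$) with the condition $A_1=0$. Second, since $L$ is an admissible polarization we have $r\Omega=2\pi c_1(L)$ for some positive integer $r$, so $\Omega$ is a positive multiple of an integral class and the rationality hypothesis of Theorem~\ref{lahdili2} is met. The contrapositive of Theorem~\ref{lahdili2} then yields that $F_{\Omega,\bb,\p}(z)$ cannot be negative anywhere on $(-1,1)$, i.e.\ $F_{\Omega,\bb,\p}\ge 0$ on $(-1,1)$. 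Combining the two, the pair consisting of $A_1=0$ and $F_{\Omega,\bb,\p}\ge 0$ on $(-1,1)$ is, by Definition~\ref{d:admissible-stable}(a), exactly the statement that $(M,L)$ is $(\hat\beta_\bb,\p)$-K-semistable with respect to admissible test configurations.

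The genuinely hard work is all upstream, inside the cited results --- in particular Theorem~\ref{lahdili2}, whose proof rests on Lahdili's boundedness-from-below theorem for the weighted Mabuchi energy together with the explicit formula of Proposition~\ref{Lahdili}. Within the present assembly, the one point demanding care is ensuring that the hypotheses of Theorem~\ref{lahdili2} are actually available: its proof uses the integrality of (a multiple of) $\Omega$, which here is guaranteed precisely by the admissible polarization, so one must track the scaling $r\Omega=2\pi c_1(L)$ and note that all the relevant conditions ($A_1=0$, positivity or negativity of $F_{\Omega,\bb,\p}$) are invariant under positive rescaling of $\Omega$. I expect the remaining subtlety to be conceptual rather than computational: the necessity direction yields only semistability rather than the expected stability, reflecting the current absence of a weighted uniqueness and properness theory, which is exactly the gap flagged in the discussion following the theorem.
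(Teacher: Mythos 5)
Your proposal is correct and follows essentially the same route as the paper: the first bullet is exactly the paper's invocation of Proposition~\ref{emsolution} (with $A_1=0$ handling the CSCK case), and the second bullet combines the necessity of the vanishing weighted Futaki invariant (the paper cites \cite[Cor.~2]{am}) with Proposition~\ref{futaki} and the contrapositive of Theorem~\ref{lahdili2}, just as the paper does. Your attention to the scale-invariance needed to apply the integrality hypothesis of Theorem~\ref{lahdili2} is also consistent with the remark made in that theorem's proof.
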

\begin{proof} The first statement follows from Proposition~\ref{emsolution}. The second statement follows from \cite[Cor.~2]{am}, Propositions~\ref{futaki} and Theorem~\ref{lahdili2}. \end{proof}
\end{defn}
\begin{remark} In the previous theorem {\em analytic} stability implied the
existence of a distinguished metric.  We do not expect that the same
is implied just by $(\hat \beta_{\bb}, \p)$-K-stability, see \cite{acgt} for an example with $\bb=+\infty$. Below, we prove no such counterexample exists if $p=2m=4$ and $A_1=0$.
\end{remark}
\begin{thm}\label{thm:ruled-classification} Let $(M, J)={P}(\cO \oplus E) \to \Sigma$ be a ruled complex surface over a compact complex curve, where  $E$ is a line bundle of positive degree over $\Sigma$, $L$ a  polarization of $(M, J)$,  which, up to a positive scale, corresponds to an admissible K\"ahler class $\Omega_x$ with $x\in(0,1)$, and $\bb >1$ a real number.  Then the following conditions are equivalent
\begin{enumerate}
\item[\rm (i)] $\Omega_x$ admits an admissible K\"ahler metric  which is conformally Einstein--Maxwell with conformal factor $(z+\bb)^{-2}$;
\item[\rm (ii)]  $\Omega_x$ admits a K\"ahler metric which is conformally Einstein--Maxwell with conformal factor $(z+\bb)^{-2}$;
\item[\rm (iii)] $(M, L)$ is $(\hat \beta_{\bb}, 4)$-K-stable on admissible test configurations;
\item[\rm (iv)] $(M, L)$ is analytically $(\hat \beta_{\bb}, 4)$-K-stable on admissible test configurations.
\end{enumerate}
\end{thm}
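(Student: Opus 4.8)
The plan is to establish the cycle $\mathrm{(iv)}\Rightarrow\mathrm{(i)}\Rightarrow\mathrm{(ii)}\Rightarrow\mathrm{(iv)}$, giving the equivalence of (i), (ii) and (iv) at once, and then to adjoin (iii) through the trivial implication $\mathrm{(iv)}\Rightarrow\mathrm{(iii)}$ together with its substantive converse. Throughout I use that here $m=2$ and $\p=4=2m$, so that by the discussion following \eqref{scalh1} a K\"ahler metric is $(z+\bb,4)$-CSCK exactly when $(z+\bb)^{-2}g$ is conformally K\"ahler, Einstein--Maxwell; thus (i) and (ii) assert, respectively, the existence of an admissible, and of an arbitrary, $(z+\bb,4)$-CSCK metric in $\Omega_x$. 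By Definition~\ref{d:admissible-stable}, condition (iii) reads ``$A_1=0$ and $F_{\Omega,\bb,4}>0$ on $(-1,1)\cap\Q$'', while (iv) reads ``$A_1=0$ and $F_{\Omega,\bb,4}>0$ on $(-1,1)$''.

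The implication $\mathrm{(iv)}\Rightarrow\mathrm{(i)}$ is immediate from Proposition~\ref{emsolution}: with $A_1=0$ and $F_{\Omega,\bb,4}>0$ on $(-1,1)$, the profile $\Theta=F_{\Omega,\bb,4}/p_c$ defines an admissible $(z+\bb,4)$-CSCK metric, and $\mathrm{(i)}\Rightarrow\mathrm{(ii)}$ is trivial. For $\mathrm{(ii)}\Rightarrow\mathrm{(iv)}$ I would first use Proposition~\ref{futaki} (equivalently \cite{am,lahdili}) to conclude that the existence of \emph{any} $(z+\bb,4)$-CSCK metric forces the $(K,\bb,4)$-Futaki invariant, hence $A_1$, to vanish. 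I would then invoke the strengthened non-existence theorem of \cite{lahdili2}, which for a geometrically ruled surface over a curve of genus $\ge2$ and $\p=4$ asserts that a single zero of $F_{\Omega,\bb,4}$ in $(-1,1)$ already obstructs the existence of a $(z+\bb,4)$-CSCK metric. Hence existence forces strict positivity of $F_{\Omega,\bb,4}$ on $(-1,1)$, which is (iv). Since positivity on $(-1,1)$ trivially gives positivity on $(-1,1)\cap\Q$, this also yields $\mathrm{(iv)}\Rightarrow\mathrm{(iii)}$.

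The real content is the converse $\mathrm{(iii)}\Rightarrow\mathrm{(iv)}$, i.e.\ upgrading positivity on the rationals to genuine positivity, which is precisely the gap left open by the general theory (cf.\ the remark preceding the statement). Here I would exploit the special shape of $F_{\Omega,\bb,4}$ in the ruled case $\hat\cA=\cA=\{1\}$, $d_1=1$, $d_0=d_\infty=0$. Then $\prod_a(1+x_az)^{d_a-1}\equiv1$, so \eqref{newem1} reduces to $G''_{x,\bb,4}=P/(z+\bb)^5$ with $P$ quadratic; integrating twice shows that $F_{\Omega,\bb,4}$ is a polynomial of degree $\le4$, and imposing the boundary data \eqref{positivityF}(ii)--(iii) together with $A_1=0$ yields the factorization $F_{\Omega,\bb,4}(z)=(1-z^2)R(z)$ with $R$ an explicit quadratic satisfying $R(\pm1)=1\pm x>0$. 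As $1-z^2>0$ on $(-1,1)$, positivity of $F_{\Omega,\bb,4}$ there is equivalent to positivity of $R$ there.

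This reduction makes the endgame elementary except for one borderline. If $R$ had two distinct roots in $(-1,1)$ it would be negative on an open subinterval, which contains rational points and contradicts (iii); if $R>0$ throughout we are done. The only configuration compatible with (iii) but not (iv) is that $R$ is tangent to zero at a single interior double root $z_0$; as a rational $z_0$ would itself violate (iii), such $z_0$ must be irrational. To exclude it I would track the free parameter of $R$: for $\p=4$ the quantities $\alpha_{r,q},\beta_{r,q}$ in \eqref{A1andA2} are, for the relevant negative integer exponents, rational functions of $\bb$ with coefficients in $\Q(x,s_1)$, so the coefficients of $R$, and thus the location of any double root, are strongly constrained; combining this with the explicit classification of conformally K\"ahler, Einstein--Maxwell metrics on such ruled surfaces in \cite{KoTo16} and the non-existence statement of \cite{lahdili2} rules out the tangential case. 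I expect this elimination of an irrational double root to be the main technical obstacle, since it is the one point where rational positivity does not formally imply real positivity and the external input of \cite{KoTo16,lahdili2} is indispensable.
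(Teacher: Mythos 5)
Your global architecture matches the paper's: the equivalences (i) $\Leftrightarrow$ (ii) $\Leftrightarrow$ (iv) rest on the same ingredients the paper assembles via \cite[Cor.~2]{lahdili2} (Proposition~\ref{emsolution} for existence, the Futaki obstruction for $A_1=0$, and the Mabuchi-energy/non-existence input of \cite{lahdili2}), and you correctly reduce (iii) $\Rightarrow$ (iv) to excluding a single \emph{irrational} interior double root of $F_{x,\bb,4}(z)=(1-z^2)R(z)$, with $R$ quadratic and $R(\pm 1)=1\pm x>0$. One patch you need: the strengthened non-existence theorem of \cite{lahdili2} that you invoke in (ii) $\Rightarrow$ (iv) is stated only for base curves of genus $\ge 2$; for genus $0$ or $1$ the base is a nonnegative CSCK curve and (iv) follows instead from Theorem~\ref{fextremalexistence} (once $A_1=0$ is known), a case distinction the paper also makes implicitly and which your write-up omits.

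The genuine gap is the tangential case itself, which you explicitly defer, and your proposed route for it would not work. The paper closes it by a concrete rationality argument: (a) since a double root can only occur when the genus is $\ge 2$ (by Theorem~\ref{fextremalexistence}), the explicit computations of \cite{KoTo16} show that $A_1=0$ forces $x=2\bb/(1+\bb^2)$, i.e.\ $\bb=(1+\sqrt{1-x^2})/x$; (b) the double-root condition is the vanishing of the discriminant of $R(z)=F_{x,\bb,4}(z)/(1-z^2)$, which by \cite[Sect.~3.1]{KoTo16} is the explicit relation $D_s(x)=0$ of \eqref{x0}; (c) with $x$ rational (forced by the polarization hypothesis), $D_s(x)=0$ implies $\sqrt{1-x^2}\in\Q$ --- the alternative $f_1(x)=f_2(x)=0$ is excluded by a direct computation --- hence $\bb\in\Q$, hence $F_{x,\bb,4}$ has rational coefficients, hence (as $\pm 1$ are already rational roots) any double root is rational, contradicting its irrationality. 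Your plan to finish by ``combining the classification in \cite{KoTo16} with the non-existence statement of \cite{lahdili2}'' is a category error at this point: condition (iii) asserts only positivity of $F_{x,\bb,4}$ at rational points and posits no metric, so an analytic non-existence theorem has nothing to contradict; the implication (iii) $\Rightarrow$ (iv) is purely algebraic. Observing, as you do, that the coefficients of $R$ lie in $\Q(x,s_1,\bb)$ is insufficient without actually proving $\bb\in\Q$, and that proof --- the chain $A_1=0$ plus vanishing discriminant plus $x\in\Q$ $\Rightarrow$ $\bb\in\Q$ --- is precisely the missing content.
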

\begin{proof}We first notice that in this case, $F_{\Omega_x,\bb,\p}(z)=F_{x,\bb, \p}(z)$ is a polynomial of degree $\le 4$. (This follows from Proposition~\ref{emsolution} and \eqref{G}, see also \cite{KoTo16}.) As shown in \cite[Cor.~2]{lahdili2}, the construction of  \cite{KoTo16} combined with Proposition~\ref{Lahdili} above  and the stability under deformation result in \cite{lahdili},  yield the equivalences
$${\rm (i)} \Longleftrightarrow {\rm (ii)} \Longleftrightarrow {\rm (iv)}.$$
Noting that clearly ${\rm (iv)} \Longrightarrow {\rm (iii)}$, we thus need to establish the implication ${\rm (iii)} \Longrightarrow {\rm (iv)}$, i.e. that
$$  \ F_{x,\bb, 4}(z)>0 \ {\rm on} \ (-1,1) \cap \Q  \Longrightarrow F_{x, \bb, 4}(z)>0 \ {\rm on} \ (-1,1)$$
under the assumption $\mathfrak{F}_{\Omega_x, \bb, 4}(K)=0=A_1$ (with $A_1$ computed by \eqref{A1andA2}).

Suppose for contradiction that $F_{x, \bb, 4}(z) \ge 0$ on $(-1,1)$ and has double irrational root. By Theorem \ref{fextremalexistence} this implies that the genus of $\Sigma$ is at least two. By the results in \cite{KoTo16}, the condition $A_1=0$ implies that $\bb=\bb(x)$ is a root of
\begin{equation}\label{a-x}
x=2\bb/(1+\bb^2)
\end{equation}
which is determined uniquely by the requirement $|\bb|>1$.
On the other hand, the condition that  $F_{x, \bb, 4}(z)$ has a double root on $(-1,1)$, together with \eqref{positivityF}(ii),  implies the vanishing of the discriminant of the second order polynomial $F_{x, \bb, 4}(z)/(1-z^2)$.  It then follows  (see \cite[Sect. 3.1]{KoTo16})  that  $x$ must coincide with the a root in $(0,1)$ of
\begin{equation}\label{x0}
D_s(x) = 12 + 12 s x - 19 x^2 - 12 s x^3 + (7+s^2) x^4 + 6(2+2 s x - 2 x^2 - s x^3)\sqrt{1-x^2}=0,
\end{equation}
with $s = 2(1-{\bf g})/d$  (here ${\bf g}$ stands for the genus of $\Sigma$ and $d$ for the degree of $L$).  We denote the above two roots by $(x_0, \bb_0)$.

Now, we can take $x=x_0$ to be rational, as the polarization assumption implies that $\Omega_{x}$ has rational coefficients. Our first goal is to show $\bb_0$ must also be rational. A careful  look at  \eqref{x0} reveals that  either $\sqrt{1-x_0^2}$  is rational, and then so must $\bb_0=(1+\sqrt{1-x_0^2})/x_0$ be,  or else $f_1(x_0) = f_2(x_0) =0$, where $f_1=12 + 12 s x - 19 x^2 - 12 s x^3 + (7+s^2) x^4$ and $f_2 = 6(2+2 s x - 2 x^2 - s x^3)$. However, the latter cannot hold since $f_2=0$ implies $s=\frac{2 (1-x^2)}{x (-2+x^2)}$ and substituting this into $f_1- f_2=0$ gives
$\frac{(1-x^2) x^4 (-12+7 x^2)}{(-2+x^2)^2}=0$ which cannot  be zero for  $0<x<1$. Thus $\bb_0$
is rational.

As both $\bb_0$ and $x_0$ are rational, it follows from the explicit computations in \cite{KoTo16} that $F_{x_0, \bb_0, 4}(z)$ has rational coefficients.  As  $\pm 1$ are roots of $F_{x_0, \bb_0, 4}(z)$ by \eqref{positivityF}(ii)-(iii), any double root of $F_{x_0, \bb_0, 4}(z)$ must then be rational too, a contradiction.\end{proof}

\section{Conformally K\"ahler, Einstein--Maxwell metrics}\label{s:EM}

While Proposition \ref{emsolution} gives an existence result for the boundary value problem consisting of \eqref{em1} together with (ii) and (iii) of \eqref{positivityF}, if we are aiming for  $(|z+\bb|,\p)$-CSCK solutions, we need to set $A_1=0$, which  in turn yields an equation for $\bb$ in terms of the admissible data that may or may not have a solution with $|\bb|>1$. Of course, if we are successful in finding such a solution, we still need to ensure that  (i) of \eqref{positivityF} holds. We can often be more specific about the form of the solution when  $A_1=0$, if we consider particular values of $\p$.

Indeed, supposing $\p\neq 0,1,...,m,m+1$ then it is easy to see that any solution to \eqref{em1} is of the form
\begin{equation}\label{em2}
F(z) = c_\p(z+\bb)^\p + c_{\p-1}(z+\bb)^{\p-1} + \sum_{k=0}^{m}c_k(z+\bb)^k,
\end{equation}
where $c_0,...,c_m$ depend on $A_1$, $A_2$ and the admissible data. Thus,  $F_{x,\bb,\p}(z)$  must be given by \eqref{em2} for a unique choice of  the coefficients $A_1$, $A_2$, $c_\p$, and $c_{\p-1}$ (whose existence is guaranteed by Proposition~\ref{emsolution} so that $F(z)$ satisfies (ii) and (iii) of \eqref{positivityF}).
In particular, if $\p>m+1$ is an integer, then $F(z)=F_{x,\bb,\p}(z)$ is a polynomial.

In this section,  we will explore further the case  $\p=2m$  ($m>1$). Then  \eqref{em1} becomes

\begin{multline}\label{p=2m}
-(z+\bb)^2 F''(z) + 2(2m-1)(z+\bb) F'(z) - 2m(2m-1) F(z)\\
 =
 (A_1 z+A_2)\Mpc(z) - \Mpc(z)  (z+\bb)^2 \sum_{a \in \hat{\cA}}
\frac{2 d_a s_a x_a }{1+x_az}.
\end{multline}

Imposing $A_1=0$ and (i) of \eqref{positivityF} in \eqref{p=2m}  will thus  produce an Einstein--Maxwell  metric $h= \frac{1}{(z+\bb)^2} g$ where $g$ is given by \eqref{gg} with $\Theta(z)=F(z)/p_c(z)$, see \cite{am}.

\subsection{Conformally K\"ahler, Einstein--Maxwell metrics over the product of two Riemann surfaces}

Let $\Sigma_a$ $(a=1,2)$ be compact Riemann surfaces with CSCK metrics $(\pm
g_a,\pm \omega_a)$ and let $M$ be $P(\cO\oplus E)\to \Sigma_1\times\Sigma_2$
where $E=E_1\otimes E_2$ for $E_a$ being pullbacks of line bundles on
$\Sigma_a$ with $c_1(E_a)=[\omega_a/2\pi]$. Let $\pm 2s_a$ be the scalar
curvature of $\pm g_a$. Note that if $\deg E_a = n_a$, then $s_a = 2(1-{\bf g}_a)/{n_a}$, where
${\bf g}_a$ denotes the genus of $\Sigma_a$. In this case, with slight abuse of notation, we will also write $M = P(\cO\oplus \cO(n_1,n_2))\to \Sigma_1\times\Sigma_2$.

Equation \eqref{p=2m} now takes the form
\begin{multline}\label{em3}
-(z+\bb)^2 F''(z) + 10(z+\bb) F'(z) - 30 F(z)\\
 =
(A_1 z+A_2)(1+x_1 z)(1+x_2 z) +  (z+\bb)^2\left( 2s_1 x_1(1+x_2z) +2s_2 x_2(1+x_1z)\right),
\end{multline}

and we know that its solution $F_{x,\bb, 6}(z)$ is of the form
$$F_{x,\bb, 6}(z) = c_{6} (z+\bb)^{6} + c_{5} (z+\bb)^{5} + c_3(z+\bb)^3 + c_2(z+\bb)^2 + c_1 (z+\bb) + c_0.$$
Plugging $F_{x,\bb, 6}(z)$ into \eqref{em3} tells us that
$$
\begin{array}{ccl}
c_0 &=& \frac{\bb(\bb x_1-1)(\bb x_2-1)A_1 - (\bb x_1-1)(\bb x_2-1) A_2}{30}\\
\\
c_1 & = & \frac{(-1 + 2\bb  x_1 + 2\bb x_2-3\bb^2x_1x_2)A_1 + (-x_1-x_2+2\bb x_1x_2)A_2}{20}\\
\\
c_2 & = & \frac{(-x_1-x_2+3\bb x_1x_2)A_1 - x_1x_2 A_2 +2(s_1 x_1(1-\bb x_2) +  s_2x_2 (1-\bb x_1))}{12}\\
\\
c_3 & = &\frac{-x_1x_2 A_1 + 2x_1x_2(s_1+s_2)}{6}
\end{array}
$$
On the other hand, (ii) of \eqref{positivityF} is equivalent to
$$
\begin{array}{ccl}
c_5 &=& \frac{-(2\bb (3+\bb ^2)(1+3\bb^2)c_0 +(\bb^2-1)(1+10\bb ^2+5\bb ^4)c_1 + 4\bb (\bb^2-1)^2(\bb^2+1) c_2 + (\bb^2-1)^3(1+3\bb^2)c_3)}{(1-\bb^2)^5} \\
\\
c_6 & = &  \frac{(1+10\bb^2+5\bb^4)c_0 +4\bb(\bb^4-1)c_1 + (\bb^2-1)^2(1+3\bb^2)c_2+ 2\bb(\bb^2-1)^3 c_3}{(1-\bb^2)^5}, \\
\\
\end{array}
$$
so $c_5$ and $c_6$ are determined by $c_0,c_1,c_2,c_3$ (and $\bb$). With this established,  (iii) of \eqref{positivityF}  is equivalent to the following two equations
\begin{equation}\label{(iii)eqn}
\begin{array}{cl}
&(3+12 \bb+30 \bb^2+20 \bb^3+15 \bb^4)c_0  +2 (\bb^2-1) (1 + 5 \bb + 5 \bb^2 + 5 \bb^3) c_1   \\
\\
+&2 (\bb^2-1)^2  (1 + 2 \bb + 3 \bb^2)c_2+(\bb^2-1)^3 (1 + 3 \bb)c_3\\
\\
=& (\bb-1)(\bb+1)^5 ( x_1-1) (x_2-1)\\
\\
\\
&(3 - 12 \bb + 30 \bb^2 - 20 \bb^3 + 15 \bb^4)c_0+  2 (\bb^2-1) (-1 + 5 \bb - 5 \bb^2 + 5 \bb^3)c_1\\
\\
+& 2  (\bb^2-1)^2 (1 - 2 \bb + 3 \bb^2) c_2 +  (\bb^2-1)^3 (-1 + 3 \bb)c_3\\
\\
= &(\bb-1)^5(\bb+1) (1 + x_1) (1 + x_2).
\end{array}
\end{equation} Using the formulas for $c_0,...,c_3$ above this yields a linear system of two equations with the two unknowns $A_1$ and $A_2$.
The linear system has coefficients that depend on the admissible data ($s_1,s_2,x_1,x_2$) as well as $\bb$ in a rather unwieldy way.

One may check, aided by Mathematica, that if $x_1,x_2>0$ then there exists $\bb>1$ such that $A_1=0$.
In general, it appears very  to be quite difficult to write this $\bb>1$ solution explicitly,  so it is  non-trivial to test the final condition (i) of \eqref{positivityF}.
But we were able to describe an explicit example below.

\begin{ex}\label{negative-scal}
Let $x_1=1/2$, $x_2=1/3$,  $\bb=5$.
This gives
$$
\begin{array}{ccl}
A_1&=& \frac{20(9840-4502s_1+1203s_2)}{24073}\\
\\
A_2&=& \frac{20(7836+4442s_1+2883 s_2)}{3439}\\
\\
c_0 &=& -\frac{12 (314+2978 s_1+787 s_2)}{24073}\\
\\
c_1 & = & \frac{2 (-135+1294 s_1+279 s_2)}{1267}\\
\\
c_2 & = & \frac{7640-14198 s_1-2697 s_2}{15204}\\
\\
c_3 & = &\frac{-98400+69093 s_1+12043 s_2}{433314}\\
\\
c_5&=& \frac{415-83 s_1-13 s_2}{30408}\\
\\
c_6&=& \frac{-21912+2862 s_1+433 s_2}{13866048}.
\end{array}
$$
When we solve for $A_1=0$ we get $s_2=\frac{2(2251s_1-4920)}{1203}$
and then compute
$Scal(h)=\frac{240}{401}(148 s_1-153)$ and {\small
$$F_{x,\bb, 6}(z) = \frac{(1-z^2)}{86616}\left( 3 (26078+22965 z+7553 z^2+1095 z^3+53 z^4)+s_1(1-z^2)(1181 + 465 z + 28 z^2)\right).$$}
Notice that $F_{x, \bb, 6}(z)$ satisfies (i) of \eqref{positivityF} when $s_1>0$, so we get a family of conformally K\"ahler,  Einstein--Maxwell metrics on
$$M= P(\cO\oplus \cO(n_1,n_2))\to {\mathbb C}{\mathbb P}^1 \times\Sigma_2,$$ where
$n_1 \in {\mathbb Z}^+$ is arbitrary and
${\bf g}_2, n_2\in {\mathbb Z}^+$ are such that
$$\frac{(4502-4920n_1)}{1203n_1} = \frac{(1-{\bf g}_2)}{n_2}.$$
We can take for instance  $n_2= 1203n_1$ and ${\bf g}_2 = 4920n_1-4501$ in order to satisfy the above relation.
With this choice,  $Scal(h)>0$ for $n_1=1$, but for $n_1>1$ $Scal(h)<0$.

Since $F_{x,\bb, 6}(z)$ satisfies (i) of \eqref{positivityF} when $s_1=0,$ we also have conformally K\"ahler,  Einstein--Maxwell metrics on
$M= P(\cO\oplus \cO(n_1,n_2))\to T^2  \times\Sigma_2,$ where
$n_1 \in {\mathbb Z}^+$ is arbitrary and
${\bf g}_2, n_2\in {\mathbb Z}^+$   are such that
$$\frac{(-1640)}{401} = \frac{(1-{\bf g}_2)}{n_2}.$$
A specific solution is $n_2= 401, {\bf g}_2 = 1641$.

Finally,  if $s_1<0$ is sufficiently close to zero, (i) of \eqref{positivityF} still holds and we will have some conformally K\"ahler,  Einstein--Maxwell metrics on
$M= P(\cO\oplus \cO(n_1,n_2))\to \Sigma_1  \times\Sigma_2,$
where the genus of $\Sigma_1$ and $\Sigma_2$ are both at least two. (On the other hand,  \eqref{positivityF}-(i) fails as $s_1 \rightarrow -\infty$.)

By \cite[Thm. 8]{acgt},  none of the manifolds above admits a  CSCK metric.
\end{ex}

The next example is inspired by the  construction of K\"ahler--Einstein admissible metrics  by Koiso and Sakane  \cite{koi-sak1,sakane}.

\begin{ex}\label{e:koiso-sakane}
We consider $M= P(\cO\oplus \cO(1,-1))\to {\mathbb C}{\mathbb P}^1 \times{\mathbb C}{\mathbb P}^1$. Thus,
we assume that $s_1=2$ and $s_2=-2$, $0<x_1<1$ and $-1<x_2<0$. The pair $(x_1,x_2) \in (0,1)\times(-1,0)$ determines the admissible K\"ahler class and, up to rescaling, this exhausts the entire K\"ahler cone.
Notice that when $x_1=1/2=-x_2$, the corresponding K\"ahler class admits a K\"ahler--Einstein admissible metric which was first discovered by
Koiso--Sakane. Moreover, for $x_2=-x_1$ or $x_2=-1+x_1$, the corresponding K\"ahler class admits a CSCK admissible metric (see e.g.  \cite[Thm. 9]{acgt}).

As in Section \ref{hodge4}, this is a case that is included by Theorem \ref{fextremalexistence}. For a given pair $(x_1,x_2) \in (0,1)\times(-1,0)$, by Proposition \ref{emsolution}, we have an admissible metric associated to a conformally K\"ahler Einstein-Maxwell metric whenever there is a solution $|\bb|>1$ of $A_1 =0$.
Here we calculate that $A_1 =0$ if and only if
$$
\begin{array}{ccl}
q(x_1,x_2,\bb)& : = & -3 (x_1+x_2) (x_1-x_2+x_1 x_2)\\
\\
& & +3 \left(2 x_1+3 x_1^2-2 x_2+8 x_1 x_2+2 x_1^2 x_2+3 x_2^2-2 x_1 x_2^2+2 x_1^2 x_2^2\right)\bb \\
\\
& & - 3 (x_1+x_2) (15-2 x_1+2 x_2+17 x_1 x_2)\bb^2\\
\\
&& + (60-10 x_1+45 x_1^2+10 x_2+240 x_1 x_2-18 x_1^2 x_2+45 x_2^2+18 x_1 x_2^2+90 x_1^2 x_2^2)\bb^3\\
\\
& & - 5 (x_1+x_2) (33+4 x_1-4 x_2+45 x_1 x_2)\bb^4\\
\\
& & + (72+34 x_1+123 x_1^2-34 x_2+408 x_1 x_2+50 x_1^2 x_2+123 x_2^2-50 x_1 x_2^2+90 x_1^2 x_2^2)\bb^5 \\
\\
& & -(x_1+x_2) (159-2 x_1+2 x_2+105 x_1 x_2)\bb^6\\
\\
& & +(60-30 x_1+15 x_1^2+30 x_2+96 x_1 x_2-38 x_1^2 x_2+15 x_2^2+38 x_1 x_2^2+6 x_1^2 x_2^2)\bb^7\\
\\
& & + 15 (-1+x_1-x_2) (x_1+x_2)\bb^8  =0.
\end{array}$$

Now, $q(x_1,x_2,1) = 192(-1+x_1)^2(-1+x_2)^2>0$ and $q(x_1,x_2,-1)=-192(1+x_1)^2(1+x_2)^2<0$.
If the leading coefficient of $q(x_1,x_2,\bb)$, i.e. $15(x_1+x_2)(-1+x_1-x_2)$, is nonzero, it
follows that there is  an $|\bb|>1$ such that $q(x_1,x_2,\bb)=0$.
On the other hand, when $x_2=-x_1$, we have
$$
\begin{array}{ccl}
q(x_1,x_2,\bb)/\bb&  = & 96(1-x_1^2)^2\\
\\
&+& 16 (1-x_1^2)(12-2x_1+9x_1^2) (\bb^2-1)\\
\\
&+& 2 \left((1-x_1)(59+31x_1-59 x_1^2-27x_1^3)+4\right)(\bb^2-1)^2\\
\\
&+& \left((1-x_1)^2(28+26 x_1 - 9x_1^2-6x_1^3)+2+6x_1^5\right)(\bb^2-1)^3
\end{array},$$
and so in this case $q(x_1,x_2,\bb)=0$ has no solutions for $|\bb|>1$.
In a similar, but slightly more tedious fashion one can verify that when $x_2=-1+x_1$, there are no solutions $|\bb|>1$ to $q(x_1,x_2,\bb)=0$. However, in those cases there are  admissible CSCK metrics (see \cite{acgt}),  which are of course conformally K\"ahler, Einstein--Maxwell metrics with constant conformal factor (in the setting of this paper, they can be thought as  $((z+\bb), 2m)$-CSCK  metrics with $\bb = \infty$),  so we can  formulate the following general existence result.

\begin{prop}\label{O(-1,1)}
Every K\"ahler class on $M= P(\cO\oplus \cO(1,-1))\to {\mathbb C}{\mathbb P}^1 \times{\mathbb C}{\mathbb P}^1$ has an admissible K\"ahler metric, conformal to   an Einstein--Maxwell metric.
\end{prop}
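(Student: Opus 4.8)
The plan is to show that every admissible K\"ahler class $\Omega_x$ with $(x_1,x_2)\in(0,1)\times(-1,0)$ --- these exhaust the K\"ahler cone up to scale --- carries either a $(z+\bb,6)$-CSCK metric for some $|\bb|>1$, or a genuine CSCK metric (the degenerate ``$\bb=\infty$'' case). By Proposition~\ref{emsolution} together with the discussion preceding \eqref{em2} and \eqref{p=2m}, the first alternative holds as soon as one exhibits a value $\bb$ with $A_1=0$ for which the associated function $F_{x,\bb,6}$ satisfies condition (i) of \eqref{positivityF}. So the proof splits into (a) producing such a $\bb$, and (b) verifying positivity, the latter being the real work.

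For (a) I use the computation above: $A_1=0$ is equivalent to $q(x_1,x_2,\bb)=0$, with $q(x_1,x_2,1)=192(x_1-1)^2(x_2-1)^2>0$ and $q(x_1,x_2,-1)=-192(1+x_1)^2(1+x_2)^2<0$. When the leading coefficient $15(x_1+x_2)(x_1-x_2-1)$ is nonzero, $q(x_1,x_2,\cdot)$ tends to $+\infty$ at both ends of $\R$ if that coefficient is positive and to $-\infty$ at both ends if it is negative; comparing with the signs at $\pm1$, the intermediate value theorem yields a root $\bb<-1$ in the first case and $\bb>1$ in the second, so a solution with $|\bb|>1$ always exists off the two lines $x_2=-x_1$ and $x_2=x_1-1$. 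On those two lines the leading coefficient vanishes and, as noted above, no $|\bb|>1$ solution exists; but the corresponding classes carry admissible CSCK metrics by \cite[Thm.~9]{acgt}, which are conformally K\"ahler, Einstein--Maxwell with constant conformal factor, so these classes are covered. By Remark~\ref{r:symmetry}, a solution with $\bb<-1$ for $\Omega_{(x_1,x_2)}$ corresponds, after interchanging the two base factors and replacing $z$ by $-z$, to a solution with $-\bb>1$ for $\Omega_{(-x_1,-x_2)}$; hence it suffices to treat $\bb>1$.

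The crux is (b). Set $G(z)=F(z)/(z+\bb)^{5}$ and $w=z+\bb>0$. Two structural facts specific to $\cO(1,-1)$ drive the argument. First, since $s_1+s_2=0$ and $A_1=0$, the coefficient formula $c_3=\tfrac{-x_1x_2A_1+2x_1x_2(s_1+s_2)}{6}$ gives $c_3=0$, so that $N(z):=w^{7}G''(z)=12c_2\,w^2+20c_1\,w+30c_0$ is a \emph{quadratic}. Second, the first equation of \eqref{A1andA2} with $A_1=0$ reads $\alpha_{0,-(1+\p)}A_2=2\beta_{0,1-\p}$; here $\alpha_{0,-(1+\p)}>0$, and because $s_1=2=-s_2$ the weight $\big(\sum_a x_as_a/(1+x_at)\big)p_c(t)$ collapses to the positive constant $2(x_1-x_2)$, so $\beta_{0,1-\p}>0$ and thus $A_2>0$. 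Feeding this into the $c_2$-formula gives leading coefficient $12c_2=4(x_1-x_2)-A_2x_1x_2=4(x_1-x_2)+A_2|x_1x_2|>0$, so $N$ opens upward and $\{N<0\}$ is an interval.

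Finally I combine this with the endpoint data \eqref{endG}: $G(\pm1)=0$, $G'(-1)>0$, $G'(1)<0$. Since $w^{7}>0$, the sign of $G''$ equals that of $N$, so on $(-1,1)$ the sign pattern of $G''$ is one of $-,\ +-,\ -+,\ +-+$; the all-positive pattern is impossible, as it would make $G'$ increasing and contradict $G'(-1)>0>G'(1)$. A short case check shows that in each of the admissible patterns $G'$ changes sign exactly once on $(-1,1)$, so $G$ increases and then decreases, and $G(\pm1)=0$ forces $G>0$, i.e. $F>0$, on $(-1,1)$. This is precisely (i) of \eqref{positivityF}, and Proposition~\ref{emsolution} with $A_1=0$ then delivers the Einstein--Maxwell metric, completing every case. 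I expect the positivity step to be the main obstacle; everything hinges on the two structural inputs ($c_3=0$ and $A_2>0$), both of which fail for general $(s_1,s_2)$ and hold here only because of the special values $s_1=-s_2=2$ attached to $\cO(1,-1)$.
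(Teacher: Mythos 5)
Your proposal is correct, and its first half coincides with the paper's argument: the paper also locates a root $|\bb|>1$ of $q(x_1,x_2,\bb)=0$ from the signs of $q(x_1,x_2,\pm 1)$ together with the nonvanishing of the degree-eight leading coefficient $15(x_1+x_2)(-1+x_1-x_2)$, treats the two degenerate lines $x_2=-x_1$ and $x_2=-1+x_1$ by the admissible CSCK metrics of \cite{acgt} (viewed as the case $\bb=\infty$), and reduces $\bb<-1$ to $\bb>1$ via Remark~\ref{r:symmetry}. Where you genuinely diverge is the positivity step (b), which the paper does not carry out by hand at all: it observes that the base $\C P^1\times\C P^1$ is a product of non-negative CSCK metrics, so Theorem~\ref{fextremalexistence} (via the Hwang--Guan root-counting in its proof) guarantees that $F_{x,\bb,\p}$ satisfies \eqref{positivityF}(i) for \emph{every} admissible class and \emph{every} $\bb>1$, and then Proposition~\ref{emsolution} applies as soon as $A_1=0$ has a root $\bb>1$. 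Your replacement is a correct, self-contained specialization: with $A_1=0$ and $s_1+s_2=0$ one indeed has $c_3=0$, or equivalently, directly from \eqref{emNew}, $w^{7}G''(z)=4(x_1-x_2)w^{2}-A_2(1+x_1z)(1+x_2z)$ with $w=z+\bb$, which is quadratic in $w$; the first equation of \eqref{A1andA2} gives $A_2=2\beta_{0,1-\p}/\alpha_{0,-(1+\p)}>0$ since the weight collapses to the constant $2(x_1-x_2)>0$ and the boundary terms $(\bb\mp1)^{1-\p}p_c(\mp1)$ are positive, whence the leading coefficient $4(x_1-x_2)-A_2x_1x_2$ is positive; and your sign-pattern analysis of $G''$ against $G(\pm1)=0$, $G'(-1)>0>G'(1)$ from \eqref{endG} correctly forces $G'$ to change sign exactly once, hence $G>0$ on $(-1,1)$ (the borderline subcase $G''\ge 0$ with a double zero is excluded by the same monotonicity argument you use for the all-positive pattern). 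In effect you have re-derived, for this specific geometry, the inflection-point count that the proof of Theorem~\ref{fextremalexistence} obtains in general from the factorization $(z+\bb)^{\p+1}G''=\bigl(\prod_a(1+x_az)^{d_a-1}\bigr)P(z)$ and the values $P(-1/x_a)$. The paper's route buys uniformity (it covers all non-negative CSCK bases at once, with no case analysis), while yours buys transparency: it exposes exactly which structural features of $\cO(1,-1)$ --- namely $s_1=-s_2$ killing $c_3$, and $A_2>0$ making $N$ an upward quadratic --- are responsible for positivity, and it is elementary enough to be checked without invoking the general existence theorem.
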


\end{ex}

\subsection{Conformally K\"ahler, Einstein--Maxwell metrics on admissible projective bundles over a CSCK $4$-manifold}\label{hodge4}
We now assume that the base $S$  is a Hodge  K\"ahler manifold of real dimension $4$ with  non-negative constant scalar curvature $4s$ (we drop the index $a$ when $|{\mathcal A}|=1$).
Note that $0\leq s \leq 3$ (the latter inequality follows from the Fujita inequality for Hodge manifolds, see \cite{fujita}). By \cite[Thm~7]{acgt},  there are no admissible CSCK metrics in this case.

Given $0<x<1$ (again, without loss of generality we may assume $x$ is positive as long as we also do not assume upfront that $\bb>1$), Theorem~\ref{fextremalexistence} and Proposition~\ref{emsolution} tells us that an admissible metric associated to
a $(|z+\bb|,\p)$-CSCK solution is equivalent to a solution $|\bb|>1$ of $A_1=0$.

From \eqref{p=2m} and \eqref{positivityF} it is straightforward to calculate that
$$A_1= \frac{8 \left(-x\bb^2 +2 \bb-x\right)}{45 (\bb-1)^{10} (1+\bb)^{10}}q(\bb,x), $$
where
$$
\begin{array}{ccl}
q(\bb, x) & =&  96 (1 - x)^3\\
\\
&+& 32 (1 - x)^2 (12 - 9 x - s x)(\bb-1)\\
\\
&+&  8 (1 - x) (87 - 120 x - 16 s x + 39 x^2 + 10 s x^2)(\bb-1)^2\\
\\
&+&  8 (1 - x) (93 - 81 x - 29 s x + 18 x^2 + 3 s x^2) (\bb-1)^3\\
\\
&+& 2 (243 - 315 x - 104 s x + 99 x^2 + 70 s x^2 - 15 x^3 + 22 s x^3)(\bb-1)^4\\
\\
&+&  2 (90 - 63 x - 45 s x + 14 s x^2 - 3 x^3 + 15 s x^3)(\bb-1)^5\\
\\
&+&5 (6 - 2 s + s(2+x)(1-x)^2)(\bb-1)^6
\end{array}
  $$
We always get at least one solution of $A_1=0$  with $|\bb|>1$ from the factor $$-x\bb^2 +2 \bb-x=0,$$
namely
\begin{equation}\label{a1-x}
 \bb_0(x) : = \frac{1+\sqrt{1-x^2}}{x}.
 \end{equation}
Notice that  $\bb_0(x)>1$ and  $\bb_0(x)$ is a decreasing function of $0<x<1$ with $\displaystyle \lim_{x\rightarrow 0}\bb_0(x)=+\infty$ and
$\displaystyle \lim_{x\rightarrow 1}\bb_0(x)=1$. Any additional solutions of $A_1 =0$
would come from solutions of the equation $q(\bb,x)=0$ satisfying $|\bb|>1, x\in(0,1)$.

\begin{prop}\label{hodge}
Let $(S, g_S, \omega_S)$ be a compact CSCK Hodge $4$-manifold with non-negative scalar curvature,  and $E$ be a holomorphic line bundle such that
$c_{1}(E)= [\omega_{S}/2\pi]$. Then,  in each admissible K\"ahler class on $M=P(\cO \oplus E) \rightarrow S$ there exists at least one
admissible K\"ahler metric  conformal to an Einstein--Maxwell metric.
\end{prop}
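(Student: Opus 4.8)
The plan is to produce the metric through Proposition~\ref{emsolution}, by selecting the value of $\bb$ that forces $A_1=0$ (so the metric is $(z+\bb,2m)$-CSCK, equivalently conformally Einstein--Maxwell) and then obtaining the positivity condition (i) of \eqref{positivityF} for free from Theorem~\ref{fextremalexistence}. Since $\dim_{\C}S=2$ we have $m=3$ and $\p=2m=6$, and we are in the situation of Section~\ref{hodge4}, with $\hat\cA=\cA=\{1\}$, $p_c(z)=(1+xz)^2$, and normalized scalar curvature $s\ge 0$.

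First I would fix an admissible K\"ahler class $\Omega_x$ and, using Remark~\ref{r:symmetry}, assume $x\in(0,1)$. From the factorization of $A_1$ recorded just above the statement,
\[
A_1=\frac{8\left(-x\bb^2+2\bb-x\right)}{45(\bb-1)^{10}(1+\bb)^{10}}\,q(\bb,x),
\]
the quadratic factor $-x\bb^2+2\bb-x$ always vanishes at $\bb_0(x)=\frac{1+\sqrt{1-x^2}}{x}>1$, see \eqref{a1-x}. Hence for every $x\in(0,1)$ there is a genuine parameter $\bb=\bb_0(x)>1$ with $A_1=0$; the $(z+\bb_0(x),6)$-extremal metric provided by Proposition~\ref{emsolution} is then automatically $(z+\bb_0(x),6)$-CSCK, once (i) of \eqref{positivityF} is known to hold.

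The only substantive remaining point is the positivity $F_{\Omega_x,\bb_0(x),6}(z)>0$ on $(-1,1)$. Here I would invoke the second part of Theorem~\ref{fextremalexistence}: the base $S$ is a single CSCK factor of non-negative scalar curvature, hence trivially a local product of non-negative CSCK metrics, so that for \emph{every} $\bb>1$ and every $\p$ the function $F_{\Omega_x,\bb,\p}$ satisfies (i) of \eqref{positivityF}. Taking $\bb=\bb_0(x)$ and $\p=6$ gives the desired positivity with no extra computation, and Proposition~\ref{emsolution} then yields an admissible $(z+\bb_0(x),6)$-CSCK metric $g$ in $\Omega_x$. As $\p=2m$, the Hermitian metric $h=(z+\bb_0(x))^{-2}g$ is conformally K\"ahler, Einstein--Maxwell, as recalled in the Introduction, which proves the claim for each admissible class.

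I expect the positivity (i) of \eqref{positivityF} to be the one genuinely delicate ingredient. The naive perturbative argument (valid only for $x$ small) does not suffice, since $\bb_0(x)$ is forced on us and ranges over all of $(1,\infty)$ as $x$ varies; it is exactly the hypothesis $\Scal(g_S)\ge 0$ that allows the global root-counting argument underlying Theorem~\ref{fextremalexistence} to apply uniformly in $\bb>1$, and thus at the specific value $\bb_0(x)$.
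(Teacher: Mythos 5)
Your proposal is correct and follows essentially the same route as the paper: in Section~\ref{hodge4} the paper likewise factors $A_1$, takes the root $\bb_0(x)=\bigl(1+\sqrt{1-x^2}\bigr)/x>1$ of the quadratic factor $-x\bb^2+2\bb-x$ to kill the Futaki obstruction in each class $\Omega_x$, and obtains the positivity (i) of \eqref{positivityF} from the second part of Theorem~\ref{fextremalexistence} (the base being, trivially, a local product of non-negative CSCK metrics), concluding via Proposition~\ref{emsolution} with $\p=2m=6$. Your closing observation that the small-$x$ perturbative argument cannot suffice, since $\bb_0(x)$ is dictated by the class, is exactly the reason the paper invokes the global root-counting statement rather than the first part of Theorem~\ref{fextremalexistence}.
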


\begin{remark}\label{multiplesolnoverHodge4mnf}
Over the interval  $1<s\leq 3$, the expression $2 s/(1 + s^2)$ is a decreasing function surjecting onto   $[3/5,1)$.
If $x= 2 s/(1 + s^2)$, then we observe that
\begin{multline*}
q(\bb, x)
= \frac{2 (\bb-s)^2}{\left(1+s^2\right)^3}
\left(15-2 s^2+3 s^4 -32 s \left(3+s^2\right)\bb+2 \left(9+130 s^2+5 s^4\right)\bb^2\right.\\
\left.-32 s \left(3+5 s^2\right)\bb^3+5 \left(3+6 s^2+7 s^4\right)\bb^4\right)
=0
\end{multline*}
has a double root at $\bb=s$ and, moreover,  $\bb_0(x)=s$. Thus $A_1=0$ has a triple root at $\bb_0=s$.

If, on the other hand, $2 s/(1 + s^2)<x<1$, then $\bb_0(x) <s$ so that
$$q(\bb_0(x), x) =\frac{6\bb_0 (\bb_0^2-1)^4   \left(5 \bb_0^2-1\right) (\bb_0-s)}{\left(\bb_0^2+1\right)^3}$$
(with $\bb_0 = \frac{1+\sqrt{1-x^2}}{x}$)
is negative
whereas $q(1,x)$ and $\displaystyle\lim_{\bb \rightarrow +\infty}q(\bb,x)$ are positive. We conclude that in this case, for each $x\in (0,1)$
$q(\bb, x)=0$ has (at least) two additional solutions, $\bb_{\pm}(x)$ with  $1<\bb_{-}(x)<\bb_0(x)<\bb_{+}(x)$.

\end{remark}

\begin{remark}
Assuming more generally that $(S,\pm g_S,\pm \omega_S)$ is a Hodge K\"ahler manifold of complex dimension $d$ with non-negative constant scalar curvature,  it seems from experimental data  (letting $d$ take various values $\ge 3$) that we always have the solution $\bb_0(x)$ of $A_1=0$ defined  in \eqref{a1-x} but  a direct  proof of this seems out of reach at the moment.
 \end{remark}

\begin{conjecture}\label{hodgebaseconjecture}  Let $(S, g_S, \omega_S)$ be a compact CSCK Hodge $2(m-1)$-manifold with non-negative scalar curvature and $E$ be a holomorphic line bundle such that $c_{1}(E) = [\omega_{S}/2\pi]$. Then,  in each admissible K\"ahler class on $M=P(\cO \oplus E) \rightarrow S$ there exists at least one admissible K\"ahler metric  which is conformal to an Einstein--Maxwell metric.
\end{conjecture}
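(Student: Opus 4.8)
The plan is to reduce the conjecture to a single, purely algebraic one-variable assertion and to isolate exactly where the difficulty lies. Write $d=m-1=\dim_{\C}S$, so the relevant weight is $\p=2m=2(d+1)$ and $\Mpc(t)=(1+xt)^{d}$, the base being a single non-negative CSCK factor. As in the four-dimensional case, I would first normalize $x\in(0,1)$ using Remark~\ref{r:symmetry}, and then exploit the decisive structural point: a compact non-negative CSCK Hodge manifold is, trivially, a one-factor local product of non-negative CSCK metrics, so the second clause of Theorem~\ref{fextremalexistence} applies for \emph{every} $\bb>1$ with $\p=2m$. Consequently the positivity condition \eqref{positivityF}(i) for $F_{\Omega_x,\bb,2m}(z)$ holds automatically, and by Proposition~\ref{emsolution} the whole conjecture collapses to the claim that the equation $A_1=0$, with $A_1$ determined by \eqref{A1andA2}, admits a root $\bb>1$ for every $x\in(0,1)$.

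With this reduction in hand the natural candidate is $\bb_0(x)=\fr{1+\sqrt{1-x^2}}{x}$ of \eqref{a1-x}, i.e.\ the root $\bb>1$ of $x=2\bb/(1+\bb^2)$, which is the value singled out in dimension $d=2$ and the one suggested by the experimental data. Applying Cramer's rule to \eqref{A1andA2}, and recalling that the denominator $\alpha_{1,q}^2-\alpha_{0,q}\alpha_{2,q}$ is strictly negative, the condition $A_1=0$ is equivalent to the single integral identity
\[
\beta_{0,1-\p}\,\alpha_{1,-(1+\p)} \;=\; \beta_{1,1-\p}\,\alpha_{0,-(1+\p)},
\]
where $\alpha_{r,q}$ and $\beta_{r,q}$ are the moment integrals \eqref{alpha-beta-r-q}. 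I would pursue two complementary strategies: (a) substitute $x=2\bb/(1+\bb^2)$ directly and try to verify this identity for all $d$; and, failing a closed form, (b) argue by continuity that $A_1(\bb,x)$ changes sign as $\bb$ runs over $(1,\infty)$, comparing the limit $\bb\to\infty$ — which recovers the ordinary extremal/Futaki coefficient of the admissible class, nonzero since there is no admissible CSCK metric by \cite[Thm.~7]{acgt} — against the degenerate endpoint $\bb\to1^{+}$, so that the intermediate value theorem yields a root.

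The step I expect to be the genuine obstacle is precisely the evaluation, or sign control, of these moment integrals when $q=-(2d+1)$ and $q=-(2d+3)$: for $\p=2m$ the exponents are the large negative integers forced by the Einstein--Maxwell normalization, and the integrals $\int_{-1}^{1}t^{r}(1+xt)^{d-\delta}(t+\bb)^{q}\,dt$ (with $\delta\in\{0,1\}$) have no elementary closed form surviving to general $d$, which is exactly why a direct check of $A_1(\bb_0(x),x)=0$ is, as the authors note, out of reach. My best hope would be either to find a recursion or generating function in $d$ for the combination $\beta_{0,1-\p}\alpha_{1,-(1+\p)}-\beta_{1,1-\p}\alpha_{0,-(1+\p)}$ that exhibits the factor $(x\bb^2-2\bb+x)$ explicitly — as happens for $d=2$ in the displayed factorization of $A_1$, recovering Proposition~\ref{hodge} — or to give the substitution $x=2\bb/(1+\bb^2)$ an intrinsic meaning, e.g.\ as a barycenter/moment condition for the weighted measure $(t+\bb)^{-(\p+1)}\Mpc(t)\,dt$ on $[-1,1]$, from which the vanishing of the Futaki-type numerator would follow without explicit integration. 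Once $A_1=0$ is secured for some $\bb>1$, the passage to a conformally K\"ahler, Einstein--Maxwell metric $h=(z+\bb)^{-2}g$ is immediate from Proposition~\ref{emsolution} together with the $\p=2m$ interpretation recalled in Section~\ref{s:EM}.
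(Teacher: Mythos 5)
You should first be aware that the statement you were asked to prove is labelled a \emph{Conjecture} in the paper: there is no proof to compare against. The authors themselves write, immediately before stating it, that experimental data for $d\ge 3$ suggests $\bb_0(x)=(1+\sqrt{1-x^2})/x$ from \eqref{a1-x} always solves $A_1=0$, ``but a direct proof of this seems out of reach at the moment.'' Your reduction is correct and coincides exactly with the paper's own framework: since a single non-negative CSCK Hodge factor is trivially a local product of non-negative CSCK metrics, the second clause of Theorem~\ref{fextremalexistence} makes the positivity condition \eqref{positivityF}(i) automatic for every $\bb>1$, so by Proposition~\ref{emsolution} the conjecture is equivalent to solvability of $A_1=0$ in $|\bb|>1$ for every admissible class. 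This is precisely how the paper proves the $d=2$ case (Proposition~\ref{hodge}), by an explicit computation exhibiting the factor $(-x\bb^2+2\bb-x)$ in $A_1$. (One small caution: after normalizing $x\in(0,1)$ via Remark~\ref{r:symmetry} you should a priori look for roots with $|\bb|>1$, not only $\bb>1$, since the symmetry flips the sign of $\bb$ along with $x$.)

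The genuine gap is that neither of your two strategies is carried out, and the crux remains exactly where the authors left it. Strategy (a) is verbatim the computation the authors declare out of reach: for $\p=2m$ the moment integrals $\alpha_{r,-(2m+1)}$, $\beta_{r,1-2m}$ of \eqref{alpha-beta-r-q} admit no closed form uniform in $d$, and no recursion exhibiting the factor $(x\bb^2-2\bb+x)$ is produced. Strategy (b), the intermediate value argument, is a genuinely different route that would suffice (it would yield \emph{some} root without identifying it with $\bb_0(x)$), but both endpoint signs are unestablished. At $\bb\to\infty$ the raw coefficient $A_1$ tends to $0$ — visible already in the explicit $d=2$ formula, where $A_1\sim -C\,\bb^{-12}$ with $C>0$ — so you need a rescaled limit, its identification up to a \emph{positive} factor with the classical extremal coefficient, and nonvanishing of the latter \emph{with a sign uniform in $x$}; the cited non-existence of admissible CSCK metrics (\cite[Thm~7]{acgt}, invoked in the paper only for $4$-dimensional bases) gives at best nonvanishing classwise, not a sign. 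At $\bb\to 1^+$ you give no analysis at all: the weights $(t+\bb)^{-(2m+1)}$ and the boundary terms $(\bb-1)^{1-2m}$ blow up there, and the sign of $A_1$ in this limit (which is $+\infty$ in the computed $d=2$ case) is not determined for general $d$, $s$, $x$. As written, your text is a sound reduction plus a plausible research plan, but not a proof; the statement remains open, exactly as the paper asserts.
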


\subsection{Conformally K\"ahler, Einstein metrics}\label{conf-Ein}  We recall here the constructions going back to  Page~\cite{page} and B\'erard-Bergery~\cite{lbb} of admissible K\"ahler manifolds which are conformally Einstein. These of course are special cases of the conformally K\"ahler, Einstein--Maxwell metrics discussed in this paper. By the results of \cite{dm,dm1}, any compact K\"ahler manifold  $(M, J, g, \omega)$ of real dimension $2m\ge 6$, which is conformally Einstein  is isometric to one of these examples. We use the computations of \cite[Sect. 5.6]{haml} in order to recast the construction of \cite{lbb,dm} in the admissible setting of this paper.  Indeed, according to \cite[Sect. 1.4 \& 5.6]{haml}, for an admissible  K\"ahler metric  $g$ of the form \eqref{gg}  to be conformally Einstein with a conformal factor $(z+\bb)$, we must have that \begin{enumerate}
\item[$\bullet$] $|\cA|=1$;
\item[$\bullet$] $M=P(\cO \oplus  E)\to S,$  where $(S, g_S, \omega_S)$ is a compact K\"ahler--Einstein manifold of positive scalar curvature $Scal(g_S)= 2(m-1)s$, and $E$ is a  holomorphic line bundle over $S$ with $c_1(E) = [\omega_S/2\pi]$;  here $s= \frac{c_1(S)\cdot c_1(E)^{m-2}}{c_1(E)^{m-1}}$ is the normalized scalar curvature of $g_S$;
\item[$\bullet$]  there exists an admissible parameter $x=x_e\in (0,1)$  and a real constant  $\bb_e>1$,  such that $F(z)= F_{x_e, \bb_e, 2m}(z)$  is given by
\begin{equation}\label{einstein}
\begin{split}
\frac{F_{x_e, \bb_e, 2m}(z)}{x_e^{m-1}} =   & \sum_{j=1}^m \frac{j}{m}  {2m \choose m+j}\Big[\lambda_+ \Big(\bb_e - \frac{1}{x_{e}}\Big)^{m-j} \Big(z+ \frac{1}{x_{e}}\Big)^{m+j}  \\
                                                              & - \lambda_- \Big(\bb_e - \frac{1}{x_{e}}\Big)^{j-1} \Big(z + \frac{1}{x_e}\Big)^{m-j} + \frac{s}{m} \Big(z+ \frac{1}{x_{e}}\Big)^m \Big],
\end{split}
\end{equation}
where $\lambda_+, \lambda_-$ are real constants.
\end{enumerate}
The point is that $F(z)= F_{x_e, \bb_e, 2m}(z)$ automatically verifies \eqref{em1} and $A_1=0$  (because the metric $(z+\bb_e)^{-2}g$ is Einstein and  therefore  $g$ has constant $(z+\bb_e, 2m)$-scalar curvature), so we are left with the 4 boundary conditions \eqref{positivityF}-(ii) \&(iii). These in turn place 4 algebraic relations for the real constants $(\lambda_+, \lambda_-, x_{e}, \bb_e)$.  The upshot of the constructions in \cite{lbb,dm1} is that if $s>1$, then these relations determine the $4$ constants,  up to a two-fold ambiguity, i.e. there exists a  unique  $x_e\in (0,1)$ and a pair  $\bb_e=\bb_{\pm}>1$ for which  $F_{x_e, \bb_{+},2m}(z)= F_{x_e, \bb_-, 2m}(z)$ satisfies \eqref{einstein} (see \cite{dm1,haml} for the geometric meaning of this). Notice that the positivity condition \eqref{positivityF}(i) is  then automatically satisfied by Theorem~\ref{fextremalexistence}.

It is clear from the setting above that we can weaken the K\"ahler--Einstein assumption for $(S,g_S, \omega_S)$ and assume instead that $(S, g_S, \omega_S)$  is a CSCK Hodge manifold with normalized scalar  curvature  $s >1$.  Then the solutions $(x_e, \bb_e= \bb_{\pm})$   will correspond to  two Einstein--Maxwell  metrics $h_{\pm}= (z+ \bb_{\pm})^{-2}g$ in the conformal class of the admissible metric corresponding to $F(z)=F_{x_e, \bb_{\pm}, 2m}(z)$. We thus have the following existence result (related to Conjecture~\ref{hodgebaseconjecture}).
\begin{prop}\label{E-extended} Let $(S, g_S, \omega_S)$ be a compact Hodge K\"ahler $2(m-1)$-manifold  of constant  scalar curvature $Scal(g_S)>2(m-1)$,
and $E$ a holomorphic line bundle such that
$c_{1}(E)= [\omega_{S}/2\pi]$. Then, $M=P(\cO \oplus E) \rightarrow S$  admits an admissible K\"ahler metric  conformal to an Einstein--Maxwell metric.
\end{prop}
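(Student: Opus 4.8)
The plan is to reduce the assertion to the single-base-factor case $|\cA|=1$ and to transplant the solution data of the classical conformally Einstein construction, recalled above following \cite{page,lbb,dm1,haml}, from the K\"ahler--Einstein setting to the merely CSCK one. Write $d:=m-1=\dim_{\C}S$ and let $s:=Scal(g_S)/(2d)$ be the normalized scalar curvature, so that the hypothesis $Scal(g_S)>2(m-1)$ reads $s>1$; here $\Mpc(z)=(1+xz)^{d}$ and the base curvature term in \eqref{p=2m} is $\tfrac{2ds\,x}{1+xz}$. The first point to record is that, by the explicit formulas \eqref{A1andA2}--\eqref{alpha-beta-r-q}, both the extremal constant $A_1=A_1(x,\bb)$ and the profile $F_{x,\bb,2m}(z)$ furnished by Proposition~\ref{emsolution} depend on the geometry of $S$ only through the pair $(s,d)$, together with the parameters $x\in(0,1)$ and $\bb>1$; they do not otherwise involve the metric $g_S$.

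Next I would invoke the conformally Einstein construction in the form recalled just above the statement. For a K\"ahler--Einstein base of normalized scalar curvature $s>1$, imposing the boundary conditions \eqref{positivityF}(ii)--(iii) on the ansatz \eqref{einstein} --- which already solves \eqref{em1} with $A_1=0$ --- yields four algebraic relations on $(\lambda_+,\lambda_-,x_e,\bb_e)$ that, by \cite{lbb,dm1}, admit a solution with $x_e\in(0,1)$ and $\bb_e=\bb_{\pm}>1$. By the uniqueness of $(A_1,A_2)$ in \eqref{A1andA2}, the existence of such a solution with vanishing linear coefficient forces $A_1(x_e,\bb_{\pm})=0$, and the function it produces is exactly $F_{x_e,\bb_{\pm},2m}(z)$; positivity \eqref{positivityF}(i) then holds automatically by Theorem~\ref{fextremalexistence}, since $s>1>0$ makes $S$ a non-negative CSCK factor.

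The key step is to transfer this data to the CSCK Hodge base of the proposition. By the dependence observed in the first paragraph, the maps $(x,\bb)\mapsto A_1(x,\bb)$ and $(x,\bb)\mapsto F_{x,\bb,2m}$ are literally the same for our base $S$ as for a K\"ahler--Einstein base sharing the same $(s,d)$. Hence the very pair $(x_e,\bb_{\pm})$ again satisfies $A_1=0$ and \eqref{positivityF}(i), so Proposition~\ref{emsolution} (in its $A_1=0$ case) produces an admissible $(z+\bb_{\pm},2m)$-CSCK metric $g$ on $M=P(\cO\oplus E)\to S$, the Hodge hypothesis guaranteeing that $M$ and the admissible class $\Omega_{x_e}$ are genuinely defined. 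Finally, by the $\p=2m$ interpretation recalled at the beginning of Section~\ref{s:EM} (see also \cite{am}), constancy of $Scal_{z+\bb_{\pm},2m}(g)$ means precisely that the Hermitian metric $h_{\pm}=(z+\bb_{\pm})^{-2}g$ is conformally K\"ahler, Einstein--Maxwell, which is the desired conclusion.

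I expect the only delicate point to be the transplantation itself: the claim that the K\"ahler--Einstein hypothesis on $S$ plays no role in the \emph{existence} of a pair $(x_e,\bb_{\pm})$ solving $A_1=0$ with the required positivity. This rests entirely on the fact that \eqref{em1}--\eqref{p=2m} see $g_S$ only through its normalized scalar curvature $s$ and complex dimension $d$, a feature already visible in \eqref{A1andA2}--\eqref{alpha-beta-r-q}. Everything else is a direct appeal to Proposition~\ref{emsolution}, Theorem~\ref{fextremalexistence}, and the classical computations recalled from \cite{lbb,dm1,haml}.
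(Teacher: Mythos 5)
Your proposal is correct and takes essentially the same route as the paper: the paper likewise observes that \eqref{em1}, \eqref{A1andA2}--\eqref{alpha-beta-r-q} see the base only through its normalized scalar curvature $s$ and dimension, so the solution data $(x_e,\bb_{\pm})$ of the conformally Einstein construction of \cite{lbb,dm1} transplants verbatim to any CSCK Hodge base with $s>1$, with positivity \eqref{positivityF}(i) supplied by Theorem~\ref{fextremalexistence} and the Einstein--Maxwell conclusion by the $\p=2m$ interpretation. The only cosmetic difference is that you spell out the uniqueness-of-$(A_1,A_2)$ argument identifying the ansatz \eqref{einstein} with $F_{x_e,\bb_{\pm},2m}(z)$ and forcing $A_1(x_e,\bb_{\pm})=0$, a point the paper states in passing when noting that \eqref{einstein} automatically verifies \eqref{em1} with $A_1=0$.
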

Notice that the constraint $Scal(g_S)>2(m-1)$ in the above proposition is equivalent to
$$c_1(S) \cdot c_1(E)^{m-2} > c_1(E)^{m-1},$$
 which in turn  limits the choice for the line bundles $E$ on a given $S$. By the Fujita inequality~\cite{fujita}, the number of  such line bundles cannot exceed $(m-1)$.

\subsection{Conformally K\"ahler,  Einstein--Maxwell metrics and the Yamabe functional}\label{futakifun}
On a compact manifold $M$ of real dimension $2m$,  the  {\em normalized Einstein--Hilbert functional}
on the set of Riemannian metrics  is defined by
$${\mathfrak S}(g):= \frac{\int_M Scal(g)\, dv_g}{(\int_M dv_g)^{\frac{m-1}{m}}},$$
where $dv_g$ denotes the volume form of $g$.
The restriction of
${\mathfrak S}$ to a conformal class $[g]$ of Riemannian metrics on $M$ is known as the {\em Yamabe functional}. It is  a deep result that the Yamabe functional  attains a minimum $Y_{[g]}$ on  $[g]$ (see e.g.~\cite{A98, LePa87, S84}).   Any metric $h \in [g]$ for which
${\mathfrak S}(h)=Y_{[g]}$ is called a {\em Yamabe minimizer} of $[g]$. It is well-known that any Yamabe minimizer $h$ has constant scalar curvature and,  if $Y_{[g]} \leq 0$,  any metric in $[g]$ which has constant scalar curvature must be homothetic to the (unique up scaling) Yamabe minimizer in $[g]$. For $Y_{[g]} >0$, the Yamabe minimizers are not necessarily homothetic and,  furthermore,  a constant scalar curvature metric in $[g]$ is not necessarily a Yamabe minimizer. Thus,   one can ask
\begin{Question}\label{yamabe} Given a constant  scalar curvature metric $h\in [g]$ with ${\mathfrak S}(h) >0$,  is $h$  a Yamabe minimizer?
\end{Question}

It is known (see e.g. \cite{A98}) that if a constant scalar curvature representative $h$ of $[g]$ is a Yamabe minimizer, it must satisfy the inequality \begin{equation}\label{aubin} {\mathfrak S}(h) \leq 2m(2m-1)Vol(\Sph^{2m})^{1/m}, \end{equation} where $\Sph^{2m}$ denote the unit sphere in $\R^{2m+1}$. Furthermore, by~\cite{S84}, the inequality  \eqref{aubin} is  strict if  $(M,[g])$ is not conformal to $\Sph^{2m}$.
Notice that for e.g. $m=2$, the right hand side of this inequality is equal to $8\sqrt{6}\pi$.

\smallskip
In what follows, we shall investigate Question~\ref{yamabe}  for some of the Einstein--Maxwell metrics (which are of constant scalar curvature by definition, see \cite{am}) that we found in the conformal classes of admissible K\"ahler metrics.

As we have already mentioned, some of the Einstein--Maxwell metrics we found have negative scalar curvature (see Example~\ref{negative-scal}), so they are Yamabe minimizers by the above general remarks. Another  such examples  are  the  Einstein metrics discussed in Section~\ref{conf-Ein}, which have positive constant scalar curvature and are Yamabe minimizers by virtue of the Obata theorem~\cite{obata}.

We shall now give examples for which the Einstein--Maxwell metrics are not Yamabe minimizers. To this end, we use the following general remarks. Suppose we have an admissible metric $g$ as defined in \eqref{gg}. Then for any $t>1$ we  consider the conformal metric $h_t := (z+t)^{-2} g$ with scalar curvature equal to
\begin{equation}\label{scalht}
\begin{array}{ccl}
Scal(h_t) & = & \frac{-(z+t)^2 F''(z) + 2(2m-1)(z+t) F'(z) - 2m(2m-1) F(z)}{\Mpc(z)} \\
\\
&+ &(z+t)^2 \sum_{a \in \hat{\cA}}
\frac{2 d_a s_a x_a }{1+x_az},
\end{array}
\end{equation}
and volume form $(z+t)^{-2m}\omega^m/m!$.
It follows  from \cite{futakiono17a} (this can be checked directly in the admissible setting) that the function $f(t)={\mathfrak S}(h_t)$
does not depend on the choice of  $F(z)$, i.e. on the particular choice of admissible representative in the given admissible K\"ahler class.
Moreover, as follows from Theorem 2.3 (b) in \cite{futakiono17a}, (or can be checked directly in the admissible setting) the critical values of $f(t)$ correspond exactly to the values $t=\bb$ where $A_1$ from \eqref{em1} (or equivalently the Futaki invariant $\mathfrak{F}_{([\omega], K, \bb, 2m)}$, see Proposition~\ref{futaki}) vanishes. Thus,  from Proposition \ref{emsolution} it follows that any critical value $t=\bb$ of $f(t)$ corresponds to a conformally K\"ahler,  Einstein--Maxwell metric $h_{\bb} = (z+\bb)^{-2} g$ provided that  $F_{[\omega],\bb,2m}(z)$ satisfies (i) of \eqref{positivityF}. Notice that the latter condition is automatic on the manifolds described in Theorem~\ref{fextremalexistence}. Further, for such an  $h_{\bb}$ to be a Yamabe minimizer, it is necessary (albeit not sufficient) that $f(t)$ has a minimum at $t=\bb$.

\subsection{Einstein--Maxwell metrics  on the first Hirzebruch surface which are not Yamabe minimizers}\label{YamabeFun}
We now restrict ourselves to the case $M=P(\cO \oplus \cO(1)) \to \C P^1$ which has been  studied  in \cite{Le15,KoTo16}. We let $g$ be an admissible
K\"ahler metric given by \eqref{gg}. In accordance with \cite{KoTo16},  we simplify the notation from Section~\ref{review}  by dropping the index $a$ and noting that the normalized scalar curvature of $S= \C P^1$ is $1$.
Thus, \eqref{scalht} becomes
$$ Scal(h_t) = \frac{-(z+t)^2F''(z) + 6(z+t) F'(z) - 12 F(z) + 4x (z+t)^2}{(1+x z)}$$
whereas
the volume form of $h_t = (z+t)^{-2} g$ is
$$dv_{h_t}= (z+t)^{-4} (1/x+ z)\omega_{\C P^1}\wedge dz\wedge \theta.$$
Using that $f(t)$ is independent of  the choice of $F(z)= \Theta(z)p_c(z)$, we can take  $F_c(z) = (1-z^2)(1+xz)$ (see \eqref{g}), and compute
$$
f(t)  =  \frac{\int_M Scal(h_t)v_{h_t}}{\sqrt{\int_M dv_{h_t}}}=4\pi\sqrt{6}\frac{\left(1-2 x - 2 x t +(1+2x)t^2\right)}{\sqrt{x\left( 1-4 x t + 3 t^2\right)(t^2-1)}}
$$
and
$$f'(t) = 16\pi\sqrt{6}\frac{x(xt^2-2t+x)((1- x)t^2-xt+x)}{(x(t^2-1)(3t^2-4xt+1))^{3/2}}.$$
It is not hard to check that if $x\leq 4/5$, $t=\bb_0(x)$  with $\bb_0(x)$ defined by \eqref{a1-x} is the only critical point of $f(t)$ for $|t|>1$,  and  it is a minimum.  If, on the other hand,
$4/5<x<1$, then we find three critical points for $f(t)$:
$\bb_0(x)= \frac{1+\sqrt{1-x^2}}{x}$ , $\bb_{+}(x)= \frac{x+ \sqrt{x(5x-4)}}{2(1-x)}$, and $\bb_{-}(x)= \frac{x- \sqrt{x(5x-4)}}{2(1-x)}$
(all greater than 1 with $\bb_-(x) < \bb_0(x)<\bb_+(x)$).  As noticed in \cite{Le15}, the values $\bb_{\pm}(x)$  give rise to the same admissible K\"ahler metric, i.e. $F_{x, \bb_{+}(x), 4}(z)= F_{x, \bb_-(x),4}(z)$. We denote by $g_0$ the admissible K\"ahler metric corresponding to $(x, \bb_0(x))$, by $g$ the admissible K\"ahler metric corresponding to $(x, \bb_{\pm}(x))$, and by $h_0=\frac{1}{(z+ \bb_0(x))^2} g_0, h_{\pm} = \frac{1}{(z+\bb_{\pm}(x))^2} g$ the corresponding Einstein--Maxwell metrics. Even though the function $f(t)$ is the same for all of these cases, we are dealing with two different conformal structures,  $[g_0]$ and $[g]$.

Now, as  the only critical points of $f(t)$ for $t>1$ are $\bb_0(x), \bb_{\pm}(x)$, they cannot all occur as minima. Indeed,  it is not hard to see that $f(t)$ has a relative maximum at
$t=\bb_0(x)$ (and relative minima at $t=\bb_{\pm}(x)$).
Thus,  the conformally K\"ahler,  Einstein--Maxwell metric $h_0$ is not a Yamabe minimizer of $[g_0]$, and this
despite the fact that for $4/5<x<1$ we have
$$f(\bb_0(x))=4\pi\sqrt{6} \frac{(1+2x)\sqrt{1-x^2}+(1+2x-x^2)}{\sqrt{x(6-5x^2+(6-2x^2)\sqrt{1-x^2})}}  <8\pi\sqrt{6},$$
i.e. $h_0$  does not violate the estimate \eqref{aubin}.
\begin{remark} Otoba \cite{otoba} produced a different family of conformally K\"ahler,  constant scalar curvature metrics on all Hirzebruch surfaces,  and in particular proved that (on each Hirzebruch surface)
some of these are not Yamabe minimizers.
\end{remark}

\begin{remark}
If we return to the explicit example in Remark \ref{multiplesolnoverHodge4mnf}, where $S={\mathbb C}{\mathbb P}^2$,
$s=3$, and $x=4/5$, one may also observe that $ {\mathfrak S}(h_t)$ has a local maximum at $t=\bb_0(x)$ and local minima at $t=\bb_{\pm}(x)$. Further, ${\mathfrak S}(h_{0}) < 30\pi\left(\frac{16}{15}\right)^{\frac{1}{3}}$, so, similarly to the first Hirzebruch surface,  we have a constant scalar curvature  Einstein--Maxwell metric which is not  a Yamabe minimizer  but  satisfies \eqref{aubin}.
\end{remark}

\appendix
\section{{Orthotoric $(f,\p)$-extremal metrics}}\lb{orthog}

The bundle geometry examined so far in this paper is related to the theory of hamiltonian
$2$-forms of order $\ell=1$, see \cite{haml}. In this appendix, we will describe local examples of $(f,\p)$-extremal metrics which admit a hamiltonian $2$-form of order $\ell=m$, that is, orthotoric K\"ahler metrics. Presumably, similar explicit constructions hold for K\"ahler metrics admitting a hamiltonian $2$-form of any order $1\le \ell \le m$, but this will not investigated in this paper.

Recall from \cite{haml} that a K\"ahler $2m$-manifold $(M,g,J,\om)$ is orthotoric if it is equipped with
$m$ Poisson-commuting Killing potentials $\sig_1,\ldots,\sig_m$ such that
on a dense open set, the roots $\xi_j$ of $\sum_{r=0}^m(-1)^r\sig_rt^{m-r}$,
with $\sig_0=1$, are smooth, with linearly independent, orthogonal gradients. The functions $(\xi_1, \ldots, \xi_m)$ together with the angular coordinates $(t_1, \ldots, t_m)$ for the momenta $(\sigma_1, \ldots, \sigma_m)$ form a coordinate system  on  that open dense subset,  and are called {\it orthotoric coordinates}. It is shown in \cite{haml} that  with respect to the orthotoric coordinates
the metric is given   by
\begin{equation}\lb{ortho}
\begin{split}
g &=\sum_{j=1}^m\fr{\Delta_j}{\Theta_j(\xi_j)}d\xi_j^2+\sum_{j=1}^m\fr{\Theta_j(\xi_j)}{\Delta_j}
\left[\sum_{r=1}^m\sig_{r-1}(\wht{\xi}_j)dt_r\right]^2\!\!, \\
\omega &=  \sum_{j=1}^m d\xi_j \wedge \big(\sum_{r=1}^m\sig_{r-1}(\wht{\xi}_j)dt_r\Big),
\end{split}
\end{equation}
where $\Delta_j=\prod_{j\ne k} (\xi_j-\xi_k)$, each $\Theta_j(z)$ is a function of one variable,
and $\sig_{r-1}(\wht{\xi}_j)$ is the $r-1$-th elementary symmetric function of the remaining $\xi_k$'s
after $\xi_j$ is removed.

It is easily deduced  from \eqref{ortho} that the following formulas hold
\begin{equation}\label{fp-data}
\begin{aligned}
&g(\nab\sig_k,\nab\sig_l)=\textstyle{\sum}_{j=1}^{m}
\sig_{k-1}(\hat\xi_j)\sig_{l-1}(\hat\xi_j)\Theta_j(\xi_j)/\Delta_j,\\[3pt]
&\Delta_g \sig_k=-\textstyle{\sum}_{j=1}^m\,\sig_{k-1}\!(\hat{\xi}_j)\Theta_j'(\xi_j)/\Delta_j,\\[3pt]
&Scal(g)=-\textstyle{\sum}_{j=1}^m\Theta_j''(\xi_j)/\Delta_j,
\end{aligned}
\end{equation}
where $\nab \sig_k$, $\Delta_g \sig_k$ are the gradient and Laplacian of $\sig_k$, respectively, $Scal(g)$ is the scalar curvature of $g$,
and the primes denote differentiation with respect to $\xi_j$.
Indeed, the last two formulas are obtained from (78),(79) in \cite{haml}, after noting that $\partial \sig_k/\partial \xi_j=\sig_{k-1}(\hat{\xi}_j)$.
The first follows from (54) in \cite{haml},  by noting that
\begin{equation}
|\nab\xi_j|^2_g
=\Theta_j(\xi_j)/\Delta_j.
\end{equation}

We now recall equation \eqref{scalh1} in the form
\[
Scal_{f,\p}(g)= f^2 Scal(g)  -2(\p-1) f\Delta_g f - \p(\p-1)|\nab f|^2_g.
\]
We notice that $\sigma_k$ is a Killing potential for the Killing vector field $\partial/\partial t_k$ of \eqref{ortho}, and any Killing potential $f$  of a Killing vector field commuting with $\partial/\partial t_j, j=1, \ldots, m$ is necessarily an affine function in the $\sig_k$'s. Thus, for such an $f$, 
the above formulas show that
$g$ is $(f,\p)$-extremal if and only if there exist constants $a_k$, $b_k$, $k=0,\ldots,m$
such that
\begin{multline}\label{fp-ext}
-\Big(\sum_{k=0}^ma_k\sig_k\Big)^{\!2}\sum_{j=1}^m\Theta_j''(\xi_j)/\Delta_j
+2(\p-1)\Big(\sum_{k=0}^ma_k\sig_k\Big)
\!\!\sum_{k,j=1}^m a_k\sig_{k-1}(\hat\xi_j)\Theta_j'(\xi_j)/\Delta_j\\
-\p(\p-1)\!\!\sum_{k,l,j=1}^m\!\!\!\Big(a_ka_l
\sig_{k-1}(\hat\xi_j)\sig_{l-1}(\hat\xi_j)\Theta_j(\xi_j)/\Delta_j\Big)
=\sum_{k=0}^mb_m\sig_k.
\end{multline}

We now consider some special cases.

\subsection{Bochner-flat orthotric metrics are $(f, m+2)$-extremal}\label{s:BF} It has been observed in \cite{am} for $m=2$ and in \cite{ac} for $m\ge 2$ that a Bochner--flat metric is $(f, m+2)$-extremal for any positive Killing potential $f$. The metric \eqref{ortho} is Bochner-flat iff $\Theta_j(z)=P(z)$ are all equal to a $j$-independent polynomial of degree $\le m+2$, see \cite[Prop.~17]{haml} and the references therein. Thus, in this case, we get a solution of \eqref{fp-ext} with $\p=m+2$ for any choice of $a_0, \ldots, a_m$.  

\subsection{Flat orthotoric metrics which are $(f,\p)$-extremal} Recall from \cite[Prop.~17]{haml} that \eqref{ortho} is flat iff $\Theta_j(z)=P(z)$  for a ($j$-independent) polynomial $P$  of degree $\le m$. In this case, we  show 
\begin{prop}\label{p:flat} Let $(g, \omega)$ be a flat orthotoric metric in the form \eqref{ortho}  with $\Theta_j(z)= P(z)$  for a polynomial $P(z)$ of degree $\le m$,  and  $f= \sum_{r=0}^m a_r \sigma_r$ be a positive Killing potential.  Then,  $(g, \omega)$ is $(f, \p)$-extremal for any $\p$.
\end{prop}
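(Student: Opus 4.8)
The plan is to verify the defining equation \eqref{fp-ext} directly, by showing that its left-hand side — which is exactly $\Scal_{f,\p}(g)$ rewritten through the formulas \eqref{fp-data} — is an affine function of the momenta $\sig_1,\dots,\sig_m$, which is what it means to be $(f,\p)$-extremal. Throughout I would set $p(t)=\prod_{j=1}^m(t-\xi_j)$, so that $p'(\xi_j)=\Delta_j$ and $q_j(t):=p(t)/(t-\xi_j)=\prod_{i\ne j}(t-\xi_i)=\sum_{r=1}^m(-1)^{r-1}\sig_{r-1}(\hat\xi_j)\,t^{m-r}$; note that the coefficients of $p$ are, up to sign, precisely $1,\sig_1,\dots,\sig_m$. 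Since $\deg P\le m$ forces $\deg P''\le m-2$, the standard Lagrange identity $\sum_j R(\xi_j)/\Delta_j=0$ for $\deg R\le m-2$ gives $\Scal(g)=-\sum_j P''(\xi_j)/\Delta_j=0$, so the first line of \eqref{fp-ext} (the $f^2\Scal(g)$ term) vanishes. Writing $\phi_j:=\partial f/\partial\xi_j=\sum_r a_r\sig_{r-1}(\hat\xi_j)$, the remaining two lines are $2(\p-1)f\sum_j\phi_j P'(\xi_j)/\Delta_j$ and $-\p(\p-1)\sum_j\phi_j^2 P(\xi_j)/\Delta_j$.

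For the Laplacian term I would use the partial-fraction expansion $\sum_j P'(\xi_j)/\bigl(p'(\xi_j)(s-\xi_j)\bigr)=P'(s)/p(s)$, valid because $\deg P'<\deg p$; multiplying by $p(s)$ gives $\sum_j q_j(s)P'(\xi_j)/\Delta_j=P'(s)$. Reading off the appropriate coefficients (weighted by the $a_r$) then shows $\sum_j\phi_j P'(\xi_j)/\Delta_j$ is a constant depending only on $P$ and the $a_r$, so the second line of \eqref{fp-ext} is a constant multiple of $f$, hence affine in the $\sig_k$.

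The heart of the argument, and the step I expect to be the main obstacle, is the third (gradient) line, because of the quadratic factor $\sig_{k-1}(\hat\xi_j)\sig_{l-1}(\hat\xi_j)$. Here I would establish the two-variable identity
\[
\sum_{j=1}^m\frac{q_j(s)\,q_j(t)\,P(\xi_j)}{\Delta_j}
=\frac{P(s)p(t)-P(t)p(s)}{t-s}=:H(s,t),
\]
proved by writing $\tfrac1{(s-\xi_j)(t-\xi_j)}=\tfrac1{t-s}\bigl(\tfrac1{s-\xi_j}-\tfrac1{t-\xi_j}\bigr)$ and applying the same partial-fraction expansion in each variable; the polynomial part (the leading coefficient of $P$, in the borderline case $\deg P=m$) cancels in the antisymmetrisation, so $H$ is a genuine polynomial of degree $\le m-1$ in each variable. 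The decisive point is that $H$ depends \emph{linearly} on $p$: substituting $p(t)=\sum_r(-1)^r\sig_r t^{m-r}$ and likewise for $p(s)$ yields
\[
H(s,t)=\sum_{r=0}^m(-1)^r\sig_r\,\frac{P(s)\,t^{m-r}-P(t)\,s^{m-r}}{t-s},
\]
an explicitly affine function of $(\sig_1,\dots,\sig_m)$ whose coefficient polynomials depend only on $P$. Because $\phi_j$ is recovered from $q_j$ by the fixed coefficient-extraction functional $\ell$ weighted by the $a_r$ (so that $f=\ell(p)$ and $\phi_j=-\ell(q_j)$), we get $\sum_j\phi_j^2 P(\xi_j)/\Delta_j=\ell_s\ell_t[H(s,t)]$; applying the two fixed linear operations $\ell_s,\ell_t$ to the displayed expansion keeps the outcome affine in the $\sig_k$.

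Assembling the three contributions, $\Scal_{f,\p}(g)$ is an affine function of $\sig_1,\dots,\sig_m$ for every $\p$ and every choice of the $a_r$, which is precisely the assertion that \eqref{fp-ext} holds for suitable constants $b_k$; hence $g$ is $(f,\p)$-extremal. Positivity of $f$ plays no role in this algebraic verification and is only needed for the Einstein--Maxwell interpretation of the result.
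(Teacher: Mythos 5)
Your proof is correct, but it takes a genuinely different route from the paper's at the decisive step. The paper handles the first two terms of \eqref{fp-ext} essentially as you do --- its Vandermonde identity \eqref{vander-general} is just the coefficient-wise form of your generating identity $\sum_j q_j(s)R(\xi_j)/\Delta_j=R(s)$ for $\deg R\le m-1$ --- but for the quadratic gradient term it argues \emph{indirectly}: it notes that affinity of $\sum_{k,l,j}a_ka_l\,\sigma_{k-1}(\hat\xi_j)\sigma_{l-1}(\hat\xi_j)P(\xi_j)/\Delta_j$ in the $\sigma_k$ is a condition independent of $\p$, and that it holds for the single value $\p=m+2$ because a flat, hence Bochner-flat, orthotoric metric is $(f,m+2)$-extremal for \emph{every} positive Killing potential $f$ (Sect.~\ref{s:BF}, quoting \cite{am,ac}); extremality for all $\p$ follows. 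You instead verify the affinity directly through the two-variable identity $\sum_j q_j(s)q_j(t)P(\xi_j)/\Delta_j=\bigl(P(s)p(t)-P(t)p(s)\bigr)/(t-s)$, whose right-hand side is manifestly affine in the coefficients of $p$, i.e.\ in $\sigma_1,\dots,\sigma_m$, and then extract coefficients with the fixed linear functional $\ell$ (with $f=\ell(p)$, $\phi_j=-\ell(q_j)$, the sign squaring out in the quadratic term). Your supporting steps all check: $\deg P''\le m-2$ kills the curvature term, $\deg P'\le m-1<\deg p$ legitimizes the partial-fraction expansion, and the leading coefficient of $P$ indeed cancels in the antisymmetrization, so $H$ is a polynomial of degree $\le m-1$ in each variable. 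What each approach buys: the paper's argument is shorter but rests on the external Bochner-flat input from \cite{am,ac}, whereas yours is self-contained and more informative --- it yields, in principle, explicit formulas for the constants $b_k$ in terms of $P$ and the $a_r$, which the paper only computes afterwards in special cases (e.g.\ $f=a_0+a_1\sigma_1$).
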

\begin{proof} Using the Vandermonde identitity (see \cite[App.~A]{haml})
\begin{equation}\label{vander-general}
\sum_{j=1}^m \frac{\xi_j^{m-s}\sigma_{r-1}(\hat \xi_j)}{\Delta_j} = (-1)^{s-1}\delta_{rs}, \ r,s=1, \ldots, m,
\end{equation}
with $r=1$ yields that the first term in the LHS of \eqref{fp-ext} is identically zero when $\Theta_j(z)=P(z)$ for a polynomial $P$ of degree $\le m$. Similarly, \eqref{vander-general} shows that the second  term is an affine-linear function in $\sigma_1, \ldots, \sigma_m$.  We thus conclude that the metric $(g, \omega)$ is $(f, \p)$-extremal if $\p=0,1$, and for $\p \neq 0,1$ it is $(f, \p)$-extremal iff 
\begin{equation*}\sum_{k,l,j=1}^m \Big(a_ka_l
\sig_{k-1}(\hat\xi_j)\sig_{l-1}(\hat\xi_j)\frac{P(\xi_j)}{\Delta_j}\Big)
\end{equation*}
is an affine-linear function in $(\sigma_1, \ldots, \sigma_m)$. Notice that the latter condition  does not depend on $\p$, and it does hold for $\p=m+2$ by the discussion in Sect.~\ref{s:BF}. Thus, we conclude that $(g, \omega)$ is $(f, \p)$-extremal for any value of $\p$. \end{proof}
In some special cases of Proposition~\ref{p:flat}, we can find explicitly  the relationship between the coefficients $a_0, \ldots, a_m$ of $f$, those of $P(z)$ and $b_0, \ldots, b_m$ in the LHS.  For example, let us take $f= a_0 + a_1\sigma_1$ with $a_1\neq 0$ and  $\Theta_j(z)= P(z)= \sum_{k=0}^m c_k z^{m-k}$.   By  the Vandermonde identities (see \cite[App. B]{haml})
\begin{equation}\label{vandermonde}
\begin{split}
\sum_{j=1}^m \frac{\xi_j^{m-s}}{\Delta_j} &= \delta_{s1}, s=1, \ldots, m; \\
\sum_{j=1}^m \frac{\xi_j^{m}}{\Delta_j} & = \sigma_1.
\end{split}
\end{equation}
the LHS of \eqref{fp-ext} now reduces to
\begin{equation*}
\begin{split}
& -(a_0+a_1\sig_1)^2\textstyle{\sum}_{j=1}^m\frac{P''(\xi_j)}{\Delta_j }  + (\p-1)\!\!\left[2(a_0+a_1\sig_1)a_1\textstyle{\sum}_{j=1}^m\frac{P'(\xi_j)}{\Delta_j}
-\p a_1^2\textstyle{\sum}_{j=1}^m\frac{P(\xi_j)}{\Delta_j}\right] \\
& = (\p-1)\left[2m(a_0 + a_1\sigma_1)a_1c_0 - \p a_1^2c_0 \sigma_1 -\p a_1^2c_1\right] \\
&= b_0 + b_1\sigma_1
\end{split}
\end{equation*}
with
\begin{equation*}
b_1 = a_1^2c_0 (\p-1)(2m-\p), \ \ b_0= (\p-1)(2m a_0a_1c_0 - \p a_1^2c_1).
\end{equation*}
We conclude  in this case that  \eqref{ortho} is a flat K\"ahler metric which has constant  $(a_0+a_1\sigma_1, \p)$-scalar curvature iff $c_0=0$ or $\p=1, 2m$. In particular, we get an $(m+2)$-dimensional family (parametrized by $a_0, c_0, \ldots, c_m$) of  conformally-K\"ahler, Einstein--Maxwell metrics for which the K\"ahler metric is flat.  We also notice that,  more generally,   if $\p \neq 0,1$ the coefficients $b_0$ and $b_1$ uniquely determine the coefficients $c_0$ and $c_1$ of $P$, but not its other coefficients.

\subsection{Orthotoric metrics which are $(\sig_m, \p)$-extremal}
We now consider case $f=\sig_m=\prod_{i=1}^m\xi_i$.  In this case we will exhibit solutions extending some of the ambitoric examples  discussed in \cite{ac,am} to higher dimensions.
\begin{prop}\label{p:sigma-m}
Let $(g, \omega)$ be an orthotoric metric  of the form \eqref{ortho}. Then $g$ is  $(\sig_m,\p)$-extremal metric with $m\ge 2$ and $\p\ne 1,\ldots,m+1$ iff each function $\Theta_j(z)$  is a sum of a polynomial $P(z)$
of degree $\le m$,  whose coefficients are independent of $j$,  and
an expression of the form $b_{1j}z^{\p-1}+b_{2j}z^\p$,
for arbitrary constants $b_{ij}$, $i=1,2, j= 1, \ldots, m$. Furthermore, in this case
$Scal_{\sig_m,\p}(g)$ is a linear combination of $\sigma_{m-1}$ and $\sigma_{m}$, i.e. the coefficients
$b_i=0$ for $i<m-1$. In particular, $Scal_{\sig_m,\p}(g)$ is constant iff it vanishes, which happens iff $P(0)=P'(0)=0$.
\end{prop}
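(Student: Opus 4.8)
The plan is to reduce the whole statement to the single scalar identity obtained by specializing the general $(f,\p)$-extremality equation \eqref{fp-ext} to $f=\sig_m$, and then to exploit the elementary identity $\sig_m=\xi_j\,\sig_{m-1}(\hat\xi_j)$, i.e. $\sig_{m-1}(\hat\xi_j)=\sig_m/\xi_j$, valid for \emph{each} $j$. Substituting $a_k=\delta_{km}$ into \eqref{fp-ext} and \eqref{fp-data} and using this identity to clear the symmetric factors, the three terms combine into
\[
Scal_{\sig_m,\p}(g)=-\sum_{j=1}^m\frac{\sig_{m-1}(\hat\xi_j)^2\,\Phi_j(\xi_j)}{\Delta_j},\qquad \Phi_j(z):=z^2\Theta_j''-2(\p-1)z\,\Theta_j'+\p(\p-1)\Theta_j .
\]
The key feature is that $\Phi_j(z)=z^{\p+1}\big(\Theta_j(z)/z^{\,\p-1}\big)''$, so dividing $\Theta_j$ by $z^{\p-1}$ turns the combination into a plain second derivative; consequently any summand $b_{1j}z^{\p-1}+b_{2j}z^\p$, being affine in $z$ after that division, contributes nothing to $\Phi_j$.

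For sufficiency I would assume $\Theta_j=P+b_{1j}z^{\p-1}+b_{2j}z^\p$ with $P=\sum_{k=0}^m c_kz^k$ common, so that $\Phi_j=\Phi$ is the common polynomial $\Phi(z)=\sum_{k=0}^m c_k(k-\p)(k-\p+1)z^k$, and evaluate the sum term by term. Writing $\sig_{m-1}(\hat\xi_j)^2=\sig_m^2/\xi_j^2$, the Vandermonde identities \eqref{vander-general}--\eqref{vandermonde} give $\sum_j \sig_{m-1}(\hat\xi_j)^2\xi_j^k/\Delta_j$ equal to $(-1)^{m-1}\sig_{m-1}$ for $k=0$, to $(-1)^{m-1}\sig_m$ for $k=1$, and to $0$ for $2\le k\le m$ (this last vanishing is exactly $\sum_j\xi_j^{k-2}/\Delta_j=0$ for powers $0,\dots,m-2$). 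Hence
\[
Scal_{\sig_m,\p}(g)=(-1)^m\big[\p(\p-1)\,P(0)\,\sig_{m-1}+(\p-1)(\p-2)\,P'(0)\,\sig_m\big],
\]
which is affine in the $\sig_r$ (so $g$ is $(\sig_m,\p)$-extremal), involves only $\sig_{m-1},\sig_m$, and — since the hypothesis on $\p$ keeps $\p(\p-1)$ and $(\p-1)(\p-2)$ nonzero and $\sig_{m-1},\sig_m$ are non-constant and independent for $m\ge2$ — is constant iff it vanishes iff $P(0)=P'(0)=0$. This settles the ``furthermore'' assertions.

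For necessity I would proceed in two steps. First, freeze $\xi_2,\dots,\xi_m$ and read $Scal_{\sig_m,\p}(g)=\sum_k b_k\sig_k$ as an identity in $z:=\xi_1$: each $\sig_k$ is affine in $\xi_1$, so the right side is $\alpha+\beta z$, while on the left $\sig_{m-1}(\hat\xi_1)$ is constant and $\sig_{m-1}(\hat\xi_j)$ ($j\ge2$) is a constant times $z$. Isolating the $j=1$ term and multiplying through by $\Delta_1=\prod_{k\ge2}(z-\xi_k)$ exhibits $\Phi_1(z)$ (up to the nonzero constant $\sig_{m-1}(\hat\xi_1)^2$) as $(\alpha+\beta z)\prod_{k\ge2}(z-\xi_k)$ plus $z^2$ times a polynomial of degree $\le m-2$ coming from the $j\ge2$ terms; hence $\Phi_1$, and by symmetry every $\Phi_j$, is a polynomial of degree $\le m$. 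Integrating $(\Theta_j/z^{\p-1})''=z^{-\p-1}\Phi_j(z)$ twice (the hypothesis on $\p$ ensures the factors $(s-\p)(s-\p+1)$, $s=0,\dots,m$, do not vanish, so no logarithm arises) recovers $\Theta_j=P_j+b_{1j}z^{\p-1}+b_{2j}z^\p$ with $P_j$ a polynomial of degree $\le m$. Second, I would force $P_j$ to be $j$-independent: since $Scal_{\sig_m,\p}(g)=\sum_k b_k\sig_k$ is regular along every diagonal $\xi_i=\xi_j$, the a priori simple pole of $\sum_j \sig_{m-1}(\hat\xi_j)^2\Phi_j(\xi_j)/\Delta_j$ there must cancel; computing the combined residue of the $i$th and $j$th terms as $\xi_i\to\xi_j=:t$ and using that $\sig_{m-1}(\hat\xi_i)$ and $\sig_{m-1}(\hat\xi_j)$ coincide on that diagonal reduces the cancellation to $\Phi_i(t)=\Phi_j(t)$ for all $t$, i.e. $\Phi_i\equiv\Phi_j$, whence $P_i=P_j$.

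The main obstacle is the necessity direction, and specifically the common-$P$ step: the $\Theta_j$ are a priori unrelated functions, so the $j$-independence of their polynomial parts has to be extracted from one scalar equation. The diagonal-residue argument is the crux, and one must check that the residue genuinely collapses to $\Phi_i=\Phi_j$ — this hinges on the equality of the $\sig_{m-1}(\hat\xi)^2$ weights on the diagonal — rather than to a weaker relation. By contrast, the polynomiality and degree bookkeeping of the first step, and the verification that the $z^{\p-1},z^\p$ terms are free, are routine once the metric quantity has been rewritten through $\Phi_j$.
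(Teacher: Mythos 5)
Your proposal is correct and follows essentially the same route as the paper's proof: the same reduction via $\sigma_{m-1}(\hat{\xi}_j)=\sigma_m/\xi_j$ to the Euler operator $z^{\p+1}\big(\Theta_j(z)/z^{\p-1}\big)''$ with homogeneous solutions $z^{\p-1},z^{\p}$, the same Vandermonde identities \eqref{vandermonde}--\eqref{vandermonde-1} for sufficiency and for identifying $b_{m-1},b_m$; your diagonal-residue cancellation is exactly the paper's step of multiplying \eqref{sigm0} by $\Delta_k$ and setting $\xi_k=\xi_{j_0}$ (where $\Delta_k/\Delta_j\to-\delta_{jj_0}$), and your direct degree count in $\xi_1$ is an equivalent substitute for the paper's $m$-fold differentiation and separation of variables establishing that each $\Phi_j$ is a polynomial of degree at most $m$. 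The only blemish, inherited from the paper's own statement, is the value $\p=0$ (permitted by the hypothesis $\p\ne 1,\dots,m+1$), where $(s-\p)(s-\p+1)$ vanishes at $s=0$ and a resonant logarithm can occur, so your parenthetical claim that these factors never vanish is not literally true in that edge case.
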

For  $\p \in \{1,\ldots,m+1\}$, we can find solutions $\Theta_j(\xi_j)$ of a similar form  but the first summand will contain logarithmic terms.
Note also that if at least one of the $b_{i\!j}$'s is nonzero, $g$ is not flat.

\begin{proof} For $f=\sigma_m$, Equation \Ref{fp-ext} becomes
\begin{multline}\lb{sigm0}
-\sigma_m^2\sum_{j=1}^m \Theta_j''(\xi_j)/\Delta_j\\
+(\p-1)\!\!\left[2\sigma_m
\textstyle{\sum}_{j=1}^m(\Theta_j'(\xi_j)/\Delta_j)\sig_{m-1}\!(\hat{\xi}_j)
-\p\textstyle{\sum}_{j=1}^m(\Theta_j(\xi_j)/\Delta_j)\sig_{m-1}^2\!(\hat{\xi}_j)\right]
=\sum_{k=0}^mb_k\sig_k,
\end{multline}
with $\sig_{m-1}(\hat{\xi}_j)=\sigma_m/\xi_j$.

We first check that  $\Theta_j(x)= P(z) + b_{1j}z^{\p-1}+b_{2j}z^\p$ as in the proposition give a solution. To this end, we re-write \eqref{sigm0} as
\begin{equation}\label{homog}
\sigma_m^2\sum_{j=1}^m \Big( \frac{-\xi_j^2\Theta_j''(\xi_j)  + 2(\p-1)\xi_j \Theta_j'(\xi_j) -\p(\p-1) \Theta_j(\xi_j)}{\xi_j^2 \Delta_j}\Big) =\sum_{k=0}^mb_k\sig_k.
\end{equation}
We notice that  for each $j$, the term $b_{1j}z^{\p-1}+b_{2j}z^\p$  is the solution of the homogeneous ODE  at the LHS of \eqref{homog}, so it is enough to compute it  with $\Theta_j(z)=P(z)$ being a $j$-independent polynomial of degree $\le m$.  To this end, we use the Vandermonde identities \eqref{vandermonde} and
\begin{equation}\label{vandermonde-1}
\begin{split}
\sum_{j=1}^m \frac{\xi_j^{s-2}}{\Delta_j} &= (-1)^{m-1} \frac{\delta_{s1}}{\sigma_m}, s=1, \ldots, m;  \\
\sum_{j=1}^m \frac{\xi_j^{-2}}{\Delta_j} &=  (-1)^{m-1}\frac{\sigma_{m-1}}{\sigma_{m}^2},
\end{split}
\end{equation}
which follow from \eqref{vandermonde}  written for $(1/\xi_i)$'s instead of the $\xi_i$'s.
The claim in the proposition follows easily from \eqref{vandermonde} and \eqref{vandermonde-1}.  This computation also shows that $Scal_{\sigma_m, \p}(g)$ is a linear combination of $\sigma_{m-1}$ and $\sigma_m$, i.e.   $b_0=\cdots =b_{m-2}=0$,  as well as the condition for the vanishing of $Scal_{\sigma_m, \p}(g)$. As mentioned above, by the classification in \cite{haml}, the orthotoric
metric $g$ is not flat  provided that at least one of the constants $b_{ij}$ is nonzero.

\smallskip
We now turn to the necessity of the conditions.
Denote by $\Delta= (-1)^{m(m-1)/2}\prod_{i<j}(\xi_i-\xi_j)$  the  Vandermont determinant of $\xi_1, \ldots, \xi_m$ and by $\Delta({\hat \xi}_j)$ the Vandermonde determinant of $\xi_k$ with $k\neq j$.  Notice that 
$\Delta({\hat \xi}_j)$ is, up to sign,  $\Delta/\Delta_j$.

We can now rewrite \Ref{sigm0} in the form
\begin{equation*}
\textstyle{\sum}_{j=1}^m\left[\pm\sig^2_{m-1}(\hat{\xi}_j)
\Delta(\hat \xi_j)
[-\xi_j^2\Theta_j''+2(\p-1)\xi_j\Theta_j'-\p(\p-1)\Theta_j]\right]
=\Delta \sum_{k=0}^mb_k\sig_k,
\end{equation*}
obtained by  multiplying both sides of \eqref{homog} by  $\Delta$ and
rearranging the result, with the signs $\pm$ left unspecified, as they will not matter
for the rest of the argument. This equation has the form
\begin{equation}\lb{form0}\sum_{j=1}^m F_j(\hat{\xi}_j)H_j(\xi_j)=G,\end{equation} where $H_j$
is the expression in the inner square brackets,  $F_j(\hat{\xi}_j)$ the rest of the $j$-th summand on the LHS (which does not depend on $\xi_j$), and $G$ a polynomial function of all the $\xi_j$'s.

We now count degrees. For $G$, we notice that the combination of $\sig_k$'s has all terms of degree at most one
in $\xi_j$, whereas  $\Delta$ has degree $m-1$ in each $\xi_j$, so $G$ has degree at
most $m$ in each $\xi_j$. Also,  $F_j(\hat{\xi}_j)$ has degree $2+m-2=m$ in each $\xi_k$,
$k\neq j$. Differentiating equation \eqref{form0} $m$ times with respect to, say, $\xi_1$,  yields
\[
 F_1(\hat{\xi}_1)H^{(m)}_1(\xi_1)+\sum_{j=2}^m
F^{(m)}_j\!(\hat{\xi}_1,\hat{\xi}_j)H_j(\xi_j)=k,
\]
Where `$(m)$' denotes this $m$-th partial derivative,
$F^{(m)}_j(\hat{\xi}_1,\hat{\xi}_j)$ does not depend on $\xi_1$ or $\xi_j$ and
$k$ is a constant.
Separation of variables yields that $H^{(m)}_1(\xi_1)$ is constant,
so that, $H_1$, and similarly each $H_j(\xi_j)$, $j=1,\ldots,m$, is a polynomial of degree at most $m$ in $\xi_j$. Now for $\p\ne 1,\ldots,m+1$, the solutions of
\begin{equation}\label{small-ode}
H(\xi_j)=-\xi_j^2\Theta_j''(\xi_j)+2(\p-1)\xi_j\Theta_j'(\xi_j)-\p(\p-1)\Theta_j(\xi_j)=
\textstyle{\sum}_{k=0}^m c_k\xi_j^k
\end{equation}
have the form $\Theta_j(\xi_j)=P_j(\xi_j)+b_{1j}\xi_j^{\p-1}+b_{2j}\xi_j^{\p},$
where $P_j(z)$ is  a polynomial of degree at most $m$.

To show the independence from $j$ of  the coefficients of $P_j(\xi_j)$, we
consider another form for equation \eqref{sigm0},
obtained by multiplying it by $\Delta_k$ for some fixed $k$.
\begin{multline*}
-\sig_m^2\Big(\Theta_k''+\textstyle{\sum}_{j\ne k}(\Delta_k/\Delta_j)\Theta_j''\Big)
+2(\p-1)\sig_m
\Big(\textstyle{\sig_{m-1}\!(\hat{\xi}_k)\Theta_k'
+\sum}_{j\ne k}^m((\Delta_k/\Delta_j)\sig_{m-1}\!(\hat{\xi}_j)\Theta_j'\Big)\\
-\p(\p-1)\Big(\sig_{m-1}^2\!(\hat{\xi}_k)\Theta_k+\textstyle{\sum}_{j\ne k}(\Delta_k/\Delta_j)\sig_{m-1}^2\!(\hat{\xi}_j)\Theta_j\Big)
=(\sum_{l=1}^mb_m\sig_l)\Delta_k,
\end{multline*}
Setting $\xi_k=\xi_{j_0}$ for some fixed $j=j_0\ne k$ in this equation,
we note that in a non-empty open set
$\Delta_k/\Delta_{j}\big|_{\xi_k=\xi_{j_0}}=-\delta_{jj_0}$,
so that we obtain
\begin{multline*}
-\sig_m^2\big|_{\xi_k=\xi_{j_0}}\Big(\Theta_k''(\xi_{j_0})-\Theta_{j_0}''(\xi_{j_0})\Big)
+2(\p-1)\sig_m\big|_{\xi_k=\xi_{j_0}}
\Big(\sig_{m-1}\!(\hat{\xi}_k)\Theta_k'(\xi_{j_0})
-\sig_{m-1}\!(\hat{\xi}_{j_0})\big|_{\xi_k=\xi_{j_0}}\Theta_{j_o}'(\xi_{j_0})\Big)\\
-\p(\p-1)\Big(\sig_{m-1}^2\!(\hat{\xi}_k)\Theta_k(\xi_{j_0})
-\sig_{m-1}^2\!(\hat{\xi}_{j_0})\big|_{\xi_k=\xi_{j_0}}\Theta_{j_0}(\xi_{j_0})\Big)
=0.
\end{multline*}
After dividing by the common factor $\sig_{m-1}^2\!(\hat{\xi}_{k})$ this simplifies to
\begin{multline*}
-\xi_{j_0}^2\Big(\Theta_k''(\xi_{j_0})-\Theta_{j_0}''(\xi_{j_0})\Big)
+2(\p-1)\xi_{j_0}
\Big(\Theta_k'(\xi_{j_0})
-\Theta_{j_o}'(\xi_{j_0})\Big)\\
-\p(\p-1)\Big(\Theta_k(\xi_{j_0})
-\Theta_{j_0}(\xi_{j_0})\Big)
=0.
\end{multline*}
Denoting the coefficients of $P_j(z)$ by $a_{j\ell}$, it follows from the known form
of $\Theta_j$ and the last equation that
\[
(a_{k\ell}-a_{j_0\ell})[-\p(\p-1)+2(\p-1)\ell-\ell(\ell-1)]=0,\qquad \ell=2\ldots m,
\]
Since the factor in the square brackets vanishes only when $\ell=\p$ or $\ell=\p-1$, and
$\p\ne 2,\ldots, m+1$, it follows that $a_{k\ell}=a_{j_0\ell}$, $\ell=2,\ldots, m$. \end{proof}

We notice that,   similarly to the example discussed after Proposition~\ref{p:flat}, the arguments in first part of the proof of Proposition~\ref{p:sigma-m} show that  $b_{m-1}$ and $b_m$ depend only on the affine part of $P(z)$ whereas its  other coefficients,   as well as  the real constants $b_{1j}$ and $b_{2j}$,  are arbitrary. This gives rise to a $(3m-1)$-dimensional family of orthotoric K\"ahler metrics of constant $(\sigma_m, \p)$-scalar curvature.


\begin{thebibliography}{99}
\bibitem{ac} V. Apostolov and D.~M.~J. Calderbank, {\it The CR geometry of weighted extremal K\"ahler and  Sasaki metrics}, in preparation.
\bibitem{haml} V. Apostolov, D.~M.~J. Calderbank and P. Gauduchon, {\it Hamiltonian 2-forms in K\"ahler geometry}, I {\it General theory}, J. Differential Geom. {\bf 73} (2006), 359--412.
\bibitem{ambitoric1} V.~Apostolov, D.~M.~J.~Calderbank, and P.~Gauduchon, {\it Ambitoric geometry I: Einstein metrics and extremal ambik\"ahler structures}, J. Reine Angew. Math. {\bf 721} (2016), 109--147.
\bibitem{aclg17} V. Apostolov, D.~M.~J. Calderbank, P. Gauduchon, and E. Legendre, Levi-K\"ahler reduction of CR structures, products of spheres, and toric geometry, Arxiv 2017.
\bibitem{acgt}  V. Apostolov, D.~M.~J. Calderbank, P. Gauduchon and C. T{\o}nnesen-Friedman, {\it Hamiltonian $2$-forms in K\"ahler geometry}, III {\it Extremal Metrics and Stability}, Invent. Math. {\bf 173} (2008) 547--601.
\bibitem{ACGT4} V.~Apostolov, D.~M.~J. Calderbank, P.~Gauduchon and C.~T{\o}nnesen-Friedman, {\it Extremal K\"ahler metrics on projective bundles over a curve}. Adv. Math. {\bf 227} (2011),  2385--2424.
\bibitem{am} V. Apostolov and G. Maschler, {\it Conformally K\"ahler Einstein-Maxwell Geometry}, to appear in JEMS, arXiv: 1512.06391
\bibitem{at} V. Apostolov and C. T{\o}nnesen-Friedman, {\it A remark  on K\"ahler metrics of constant scalar curvature on ruled complex surfaces}, Bull. London Math. Soc. {\bf 38} (2006), 494--500.
\bibitem{A98} T. Aubin, {\rm Some Nonlinear Problems in Riemannian Geometry}, {\it Springer Monographs in Mathematics}, Springer-Verlag Berlin-Heidelberg 1998.
\bibitem{bb} R. Berman and B. Berndtsson, {\it Convexity of the K-energy on the space of K\"ahler metrics and uniqueness of extremal metrics}, J. Amer. Math. Soc. {\bf 30} (2017), 1165--1196.
\bibitem{lbb} L. B\'erard-Bergery, {\it Sur des nouvelles vari\'et\'es riemanniennes d'Einstein}, Publ. Institut E. Cartan,  No 4 (1982), 1--60.
\bibitem{calabi} E. Calabi, {\it Extremal K{\"a}hler metrics} II, in: Differential Geometry and Complex Analysis (eds. I. Chavel and H.M. Farkas), Springer, Berlin, 1985.
\bibitem{ct} X. X.  Chen and G. Tian, {\it Geometry of K\"ahler metrics and foliations by holomorphic discs}. Publ. Math. Inst. Hautes \'Etudes Sci. {\bf 107} (2008), 1--107.
\bibitem{dm} A. Derdzi{n}ski and G. Maschler, {\it Local classification of conformally Einstein K\"ahler metrics in higher dimension}, Proc. London Math. Soc. (3) {\bf 87} (2003), 779--819.
\bibitem{dm1} A. Derdzi{n}ski and G. Maschler, {\it Special K\"ahler-Ricci potentials on compact K\"ahler manifolds}, J. Reine Angew. Math. {\bf 593} (2006), 73--116.
\bibitem{D} R. Dervan, {\it Relative K-stability for K\"ahler manifolds}, to appear in Math. Ann., arxiv:1611.00569.
\bibitem{DR} R. Dervan, J. Ross, {\it K-stability for K\"ahler Manifolds},  Math. Res. Lett. {\bf 24} (2017), 689--739.
\bibitem{donaldson} S.~K.~Donaldson, {\it Remarks on gauge theory, complex geometry and 4-manifold topology}, Fields Medallists lectures, 384--403, World Sci. Ser. 20th Century Math., 5, World Sci. Publishing, River Edge, NJ, 1997.
\bibitem{donaldson06} S. Donaldson, {\it Lower bounds of the Calabi functional}, J. Differential Geom. {\bf 70} (2006), 453--472.
 \bibitem{donaldson-survey} S. Donaldson, {\it K\"ahler geometry on toric manifolds, and some other manifolds with large symmetry}. in Handbook of geometric analysis. No. 1, 29--75, Adv. Lect. Math. (ALM), 7, Int. Press, Somerville, MA, 2008.
\bibitem{futakiono17a}  A. Futaki and H. Ono, {\it Volume minimization and Conformally K\"ahler Einstein--Maxwell Geometry}, arxiv: 1706.07953.
\bibitem{futakiono17b} A. Futaki and H. Ono, {\it Conformally Einstein--Maxwell K\"ahler metrics and structure of the automorphism group},  arXiv:1708.01958.
\bibitem{fujiki} A.~Fujiki, {\it Moduli space of polarized algebraic manifolds and K\"ahler metrics},
[translation of Sugaku {\bf 42}, no. 3 (1990), 231-243], Sugaku Expositions {\bf 5}, no. 2 (1992), 173--191.
\bibitem{fujita} T.~Fujita, {\it On polarized manifolds whose adjoint bundles are not semipositive}, in `Algebraic Geometry' (Sendai, 1985), 167--178,
Adv. Stud. in Pure Math.  {\bf 10}, North Holland, 1987.
\bibitem{gauduchon-book} P. Gauduchon, {\rm Calabi's extremal K\"ahler metrics: An elementary introduction}, book available upon request.
\bibitem{Guan95} D. Guan, {\it Existence of extremal metrics on compact almost homogeneous K\"ahler manifolds with two ends}, Trans. Am. Math. Soc. 347 (1995), 2255--2262.
\bibitem{GS} V.~Guillemin and S.~Sternberg, {\it Riemann sums over polytopes},  Ann. Inst. Fourier (Grenoble) {\bf 57} (2007),  2183--2195.
\bibitem{Hwang94} A. D. Hwang, {\it On existence of K\"ahler metrics with constant scalar curvature}, Osaka J. Math. 31 (1994), 561--595.
\bibitem{hwang-singer} A.~D. Hwang and M.~A. Singer, {\it A momentum construction for circle-invariant K\"ahler metrics}, Trans. Amer. Math. Soc. {\bf 354} (2002), 2285--2325.
\bibitem{KoTo16} C. Koca, C. T{\o}nnesen-Friedman, {\it Strongly Hermitian Einstein--Maxwell solutions on Ruled Surfaces}, Annals of Global Analysis and Geometry, {\bf 50} (2016), 29-46.
\bibitem{koi-sak1} N. Koiso and Y. Sakane, {\it Nonhomogeneous
K\"ahler--Einstein metrics on compact complex manifolds}, in Curvature and
Topology of Riemannian Manifolds (Kataka, 1985), Lecture Notes in Math. {\bf
1201}, Springer, Berlin, 1986, 165--179.
\bibitem{lahdili} A. Lahdili, {\it Automorphisms and deformations of conformally K\"ahler, Einstein--Maxwell metrics},   to appear in J. Geom. Anal., arXiv:1708.01507.
\bibitem{lahdili2} A. Lahdili, {\it Conformally K\"ahler, Einstein--Maxwell metrics and boundedness of the modified Mabuchi functional},  arXiv:1710.00235.
\bibitem{lahdili3} A. Lahdili,  {\it Weighted constant scalar curvature K\"ahler metrics and weighted K-stability}, in preparation.
\bibitem{Le97} C. LeBrun, {\it Yamabe Constants and the Perturbed Seiberg-Witten Equations}, Comm. Anal. Geom. {\bf 5} (1997), 535--553.
\bibitem{Le08} C. LeBrun, {\it The Einstein--Maxwell equations, extremal K\"ahler metrics, and Seiberg--Witten theory}, in: The Many Facets of Geometry, Oxford Univ. Press. Oxford 2010, pp. 17--33.
\bibitem{LeJGP} C. LeBrun, {\it The Einstein--Maxwell equations, K\"ahler metrics, and Hermitian geometry}, Journal of Geometry and Physics {\bf 91} (2015) 163--171.
\bibitem{Le15} C. LeBrun, {\it The Einstein-Maxwell Equations and Conformally K\"ahler Geometry}, Commun. Math. Phys. (2016) {\bf 344}: 621. doi:10.1007/s00220-015-2568-5
\bibitem{LePa87} J. Lee and T. Parker, {\it The Yamabe Problem}, Bull. Amer. Math. Soc. {\bf 17} (1987), 37--91.
\bibitem{MaTo11} G. Maschler, C. T{\o}nnesen-Friedman, {\it Generalizations of K\"ahler-Ricci Solitons on Projective Bundles}, Math. Scand. {\bf 108} (2011), 161--176.
\bibitem{obata} M. Obata, {\it Certain conditions for a Riemannian manifold to be isometric to the sphere}, J. Math. Soc. Japan {\bf 14} (1962), 333--340.
\bibitem{otoba} N. Otoba, {\it Constant scalar curvature metrics on Hirzebruch surfaces}, Annals of Global Analysis and Geometry
{\bf 46} (2014),  197--223.
\bibitem{page} D.~Page, {\it A compact rotating gravitational instanton}, Phys. Lett. B  {\bf 79} (1979), 235--238.
\bibitem{RT} J. Ross and R. Thomas, {\it An obstruction to the existence of constant scalar curvature K\"ahler metrics},
J. Differential  Geom. {\bf 72} (2006), 429--466.
\bibitem{sakane} Y.~Sakane, {\it Examples of compact Einstein K\"ahler
manifolds with positive Ricci tensor}. Osaka J. Math. {\bf 23} (1986),
585--616.
\bibitem{S84} R. Schoen, {\it Conformal Deformation of a Riemannian Metric to Constant Scalar Curvature}, J. Differential Geom. {\bf 20} (1984), 478--495.
\bibitem{sz} G.~Sz\'ekelyhidi, {\it Extremal metrics and K-stability},  Bull. London Math. Soc. {\bf 39} (2007), 76--84.
\bibitem{christina1} C. T{\o}nnesen-Friedman,
{\it Extremal K\"ahler metrics on minimal ruled surfaces}, J. reine
angew. Math. {\bf 502} (1998), 175--197.
\bibitem{zeld} S. Zelditch, {\it Bernstein polynomials, Bergman kernels and toric K\"ahler varieties},  J. Symplectic Geom. {\bf 7} (2009), 51--76.
\end{thebibliography}
\end{document}